\documentclass{amsart}
\usepackage{amsthm,amsmath,amsfonts,mathrsfs,xypic, amssymb, eucal,comment, enumerate}
\numberwithin{equation}{section}

\theoremstyle{plain}
\newtheorem{theorem}{Theorem}[section]
\newtheorem{proposition}[theorem]{Proposition}
\newtheorem{lemma}[theorem]{Lemma}
\newtheorem{corollary}[theorem]{Corollary}

\theoremstyle{definition}
\newtheorem{definition}[theorem]{Definition}
\newtheorem{remark}[theorem]{Remark}
\newtheorem{example}[theorem]{Example}

\title{Subtle invariants of $F$-crystals}
\author[X. Xiao]{Xiao Xiao}
\address{Mathematics Department, Utica College, 1600 Burrstone Road, Utica, NY 13502}
\email{xixiao@utica.edu}

\begin{document}
\begin{abstract}
Vasiu proved that the level torsion $\ell_{\mathcal{M}}$ of an $F$-crystal $\mathcal{M}$ over an algebraically closed field of characteristic $p>0$ is a non-negative integer that is an effectively computable upper bound of the isomorphism number $n_{\mathcal{M}}$ of $\mathcal{M}$ and expected that in fact one always has $n_{\mathcal{M}} = \ell_{\mathcal{M}}$. In this paper, we prove that this equality holds.
\end{abstract}

\maketitle

\section{Introduction}

\subsection{Notations}
Let $p$ be a prime number and $k$ an algebraically closed field of characteristic $p$. For every $k$-algebra $R$, let $W(R)$ be the ring of $p$-typical Witt vectors with coefficients in $R$. For every integer $s \geq 1$, let $W_s(R)$ be the ring of truncated $p$-typical Witt vectors of length $s$ with coefficients in $R$. Let $\sigma_{R}$ be the Frobenius of $W(R)$ and $W_s(R)$. Let $\theta_{R}$ be the Verschiebung of $W(R)$ and $W_s(R)$. Recall that $\sigma_R \theta_R = \theta_R \sigma_R = p$. When there is no confusion of the base ring, we also denote $\sigma_R$ by $\sigma$ and $\theta_R$ by $\theta$. Set $B(R) = W(R)[1/p]$. When $R = k$, $B(k)$ is the field of fractions of $W(k)$. An $F$-crystal $\mathcal{M}$ over $k$ is a pair $(M, \varphi)$ where $M$ is a free $W(k)$-module of finite rank and $\varphi : M \to M$ is a $\sigma$-linear monomorphism. Unless mentioned otherwise, all $F$-crystals in this paper are over $k$. We denote by $\mathcal{M}_R$ the pair $(M \otimes_{W(k)} W(R), \varphi \otimes \sigma_R)$. For every $W(k)$-linear automorphism $g$ of $M$, we denote by $\mathcal{M}(g)$ the $F$-crystal $(M, g\varphi)$ over $k$.

\subsection{Aim and scope}
The isomorphism number $n_{\mathcal{M}}$ of an $F$-crystal $\mathcal{M} = (M, \varphi)$ is the smallest non-negative integer such that for every $W(k)$-linear automorphism $g$ of $M$ with the property that $g \equiv 1_M$ modulo $p^{n_{\mathcal{M}}}$, the $F$-crystal $\mathcal{M}(g)$ is isomorphic to $\mathcal{M}$. This is the generalization of the isomorphism number $n_D$ of a $p$-divisible group $D$ over $k$, which is defined to be the smallest non-negative integer such that for every $p$-divisible group $D'$ over $k$ with the same dimension and codimension as $D$, $D'[p^{n_{D}}]$ and $D[p^{n_{D}}]$ are isomorphic if and only if $D'$ is isomorphic to $D$. The isomorphism numbers of $p$-divisible groups are known to exist as early as in  \cite{Manin:formalgroups}, as a consequence of Theorems 3.4 and 3.5 of the loc. cit. Recently, the isomorphism numbers of $F$-crystals are known to exist by \cite[Main Theorem A]{Vasiu:CBP}.

Traverso proved that $n_D \leq cd+1$ in \cite[Theorem 3]{Traverso:pisa}, where $c$ and $d$ are the codimension and the dimension (respectively) of the $p$-divisible group $D$. He later conjectured that $n_D \leq \min\{c, d\}$ in \cite[Section 40, Conjecture 4]{Traverso:specializations}. In search of optimal upper bounds of $n_D$, the following theorem plays an important role:

\begin{theorem}[{\cite[Theorem 1.6]{Vasiu:traversosolved}}] \label{theorem:maintheoremspecial}
If $D$ is a non-ordinary $p$-divisible group over an algebraically closed field $k$, then its isomorphism number $n_D$ is equal to its level torsion $\ell_D$.
\end{theorem}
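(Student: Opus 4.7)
The plan is to establish the non-trivial direction $n_D \geq \ell_D$, since Vasiu's general bound gives $n_D \leq \ell_D$ already. Concretely, the task becomes: produce an automorphism $g$ of $M$ with $g \equiv 1_M \pmod{p^{\ell_D - 1}}$ for which $\mathcal{M}(g) \not\cong \mathcal{M}$. The natural language is $\sigma$-conjugacy: two Frobenius structures $\varphi$ and $g\varphi$ on $M$ yield isomorphic $F$-crystals precisely when $g = h \varphi h^{-1}\varphi^{-1}$ for some $h \in \mathrm{Aut}_{W(k)}(M)$, so one must exhibit a $g$ at level $\ell_D - 1$ not of this form.

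My first step would be to unwind the definition of $\ell_D$: it is extracted from the $p$-adic filtration on $\mathrm{End}(M)$, cut out by how $\varphi$ and $p\varphi^{-1}$ act on appropriate sublattices, and measures the first level at which a certain $\sigma$-linear operator trivializes modulo a power of $p$. The non-ordinary hypothesis is crucial here, since it guarantees that the automorphism group scheme of $\mathcal{M}$ over $B(k)$ has non-trivial unipotent radical, which forces $\ell_D > 0$ and produces genuine tangent-level obstructions. After reducing to an isoclinic situation (or a direct sum of isoclinic blocks) via the Dieudonn\'e--Manin classification over $\overline{B(k)}$, I would choose an endomorphism $e \in \mathrm{End}(M)$ whose class at level $\ell_D - 1$ lies outside the image of the $\sigma$-linear coboundary $h \mapsto h - \varphi h \varphi^{-1}$ on a suitable quotient. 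Setting $g := 1_M + p^{\ell_D - 1} e$, the unsolvability of $g = h\varphi h^{-1}\varphi^{-1}$ then witnesses $\mathcal{M}(g) \not\cong \mathcal{M}$.

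The main obstacle is precisely verifying this non-vanishing at level $\ell_D - 1$. The subtlety is that $\mathrm{End}(M)$ carries two competing $\sigma$-linear structures coming from $\varphi$ and $p\varphi^{-1}$, and one must show that the filtration defining $\ell_D$ is sharp rather than merely an upper bound. I expect this to require an inductive argument on the number of distinct Newton slopes, combined with a careful analysis of how the Hodge filtration (whose slopes are restricted to $\{0,1\}$ in the $p$-divisible group setting) intersects the Newton filtration. The restriction on Hodge slopes is exactly what distinguishes the $p$-divisible group case from the general $F$-crystal case and should keep the explicit Dieudonn\'e-module computation tractable; by contrast, removing this restriction is presumably why the general statement proved in the present paper is substantially harder.
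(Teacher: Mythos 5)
First, a structural point: the paper does not actually prove Theorem~\ref{theorem:maintheoremspecial}; it cites it from Lau--Nicole--Vasiu and then reproves it as a special case of the Main Theorem~\ref{theorem:maintheorem3} for arbitrary $F$-crystals. So the relevant comparison is with the paper's proof of the general statement, which proceeds via the chain $n_{\mathcal{M}} = f_{\mathcal{M}} \leq e_{\mathcal{M}} \leq \ell_{\mathcal{M}} \leq f_{\mathcal{M}}$. In particular the paper emphasizes that it does \emph{not} invoke the earlier bound $n_{\mathcal{M}} \leq \ell_{\mathcal{M}}$; your proposal starts by conceding that bound and only aims to prove $n_D \geq \ell_D$, which is a legitimate but different division of labor.

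Your top-level strategy is sound as far as it goes: the characterization of $\mathcal{M}(g) \cong \mathcal{M}$ via $\sigma$-conjugacy is correct (Proposition~\ref{proposition:homftruncation} is essentially this), and exhibiting a $g \equiv 1_M \pmod{p^{\ell_D - 1}}$ with $\mathcal{M}(g) \not\cong \mathcal{M}$ would indeed give $\ell_D \leq n_D$. The paper's proof of $\ell \leq f$ in Subsection~\ref{subsection:l<f} is in this spirit: it picks $x \in p^{\ell}H' \setminus pO'$ and derives a contradiction if $f < \ell$. But your proposed route to the key non-vanishing contains serious gaps.

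The central problem is the claimed reduction to isoclinic blocks. Dieudonn\'e--Manin only classifies \emph{isocrystals}; the lattice $M$ need not respect the slope decomposition, and $\ell_D$ measures precisely how far the level module $O = O^+ \oplus O^0 \oplus O^-$ (which is slope-graded) sits inside $H = \mathrm{End}(M)$ (which is not). Reducing to a direct sum of isoclinic pieces destroys the invariant you are trying to compute: for a direct sum of isoclinic $p$-divisible groups the off-diagonal $\mathrm{Hom}$ terms already give $\ell_D$ (Lemma~\ref{lemma:leveltorsionrelation}), and Vasiu had already handled exactly the direct-sum case. The genuinely hard case is a non-split lattice, which your reduction throws away. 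Your claim that non-ordinariness forces the automorphism group over $B(k)$ to have non-trivial unipotent radical is also false for a single simple isoclinic isocrystal: there $\mathrm{Aut}$ over $B(k)$ is a split reductive group $\mathrm{GL}_n$ with trivial unipotent radical, yet $\ell_D > 0$. Finally, the actual mechanism that makes the coboundary map $h \mapsto \varphi h \varphi^{-1} - h$ fail to be surjective at level $\ell_D - 1$ is not an induction on Newton slopes; in the paper (and in Lau--Nicole--Vasiu) it is an Artin--Schreier-type argument over a power series ring $k'[[\alpha]]$ (Lemma~\ref{lemma:mainproofsolveequation} through equation~\eqref{equation:mainprooffact4}), combined with the finiteness statement for $\mathrm{Im}(\pi_{m,n})$ built into $f_{\mathcal{M}}$. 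Your ``careful analysis of how the Hodge filtration intersects the Newton filtration'' is not a substitute: the Hodge constraint $\{0,1\}$ in the $p$-divisible group case is what keeps certain estimates small, but it does not by itself produce the required element outside the coboundary image, and the paper's argument does not in fact use this constraint at all (it works for arbitrary Hodge slopes). So the proposal identifies the correct target but has no working argument for the step that carries all the difficulty.
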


For the definition of $\ell_D$, see \cite[Subsection 1.4]{Vasiu:reconstructing} and \cite[Definition 8.3]{Vasiu:traversosolved}. We point out that the two definitions are slightly different. In the case when $D$ is a direct sum of two or more isoclinic ordinary $p$-divisible groups of different Newton slopes, we get $\ell_D = 1$ by the definition in \cite[Subsection 1.4]{Vasiu:reconstructing}; on the other hand, we get $\ell_D=0$ by \cite[Definition 8.3]{Vasiu:traversosolved}. If we assume that $D$ is non-ordinary, then the two definitions coincide. 

Vasiu proved that $n_D \leq \ell_D$ in \cite[Main Theorem A]{Vasiu:reconstructing}, and that $n_{D} = \ell_{D}$ provided $D$ is a direct sum of isoclinic $p$-divisible groups, that is, of $p$-divisible groups whose Newton polygons are straight lines. Later Lau, Nicole and Vasiu proved the equality $n_D = \ell_D$ in \cite{Vasiu:traversosolved} for all $p$-divisible groups $D$ over $k$. Theorem \ref{theorem:maintheoremspecial} builds a bridge between the isomorphism number $n_D$ and other invariants of $D$, such as the level torsion $\ell_D$, the endomorphism number $e_D$, and the coarse endomorphism number $f_D$, which turn out to be all equal by \cite[Theorem 8.11]{Vasiu:traversosolved}; see \cite[Definitions 2.2 and 7.2]{Vasiu:traversosolved} for their definitions. Using Theorem \ref{theorem:maintheoremspecial}, Lau, Nicole and Vasiu were able to find the optimal upper bound of $n_D \leq \lfloor 2cd/(c+d) \rfloor$ (see \cite[Theorem 1.4]{Vasiu:traversosolved}), which provides a corrected version of Traverso's conjecture.

The level torsion $\ell_{\mathcal{M}}$ of an $F$-crystal $\mathcal{M}$ is well-defined; see \cite[Section 1.2]{Vasiu:reconstructing} or Subsection \ref{subsection:leveltorsion} for its definition. Therefore it is natural to ask if the similar equality $n_{\mathcal{M}} = \ell_{\mathcal{M}}$ holds or not in general. As mentioned before, Vasiu has already proved that $n_{\mathcal{M}} \leq \ell_{\mathcal{M}}$ and the equality holds when $\mathcal{M}$ is a direct sum of isoclinic $F$-crystals. He expressed the expectation that the equality is true in general; see the paragraph after \cite[1.3 Main Theorem A]{Vasiu:reconstructing}. In this paper, we confirm this expectation.

\begin{theorem}[Main Theorem] \label{theorem:maintheorem}
If $\mathcal{M}$ is a non-ordinary $F$-crystal over an algebraically closed field $k$, then its isomorphism number $n_{\mathcal{M}}$ is equal to its level torsion $\ell_{\mathcal{M}}$.
\end{theorem}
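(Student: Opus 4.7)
Since Vasiu has already established the upper bound $n_{\mathcal{M}} \leq \ell_{\mathcal{M}}$ in \cite[Main Theorem A]{Vasiu:reconstructing}, the content of Theorem \ref{theorem:maintheorem} is the reverse inequality $\ell_{\mathcal{M}} \leq n_{\mathcal{M}}$. My plan is to reduce the general $F$-crystal statement to Theorem \ref{theorem:maintheoremspecial}, which handles the $p$-divisible group case (slopes in $[0,1]$). The bridge between the two settings is a ``slope shift'' construction that compares $\mathcal{M}$ with a naturally attached $F$-crystal whose slopes have been translated into $[0,1]$, together with a careful bookkeeping of how $n$ and $\ell$ behave under such a translation.

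First, I would set up the slope-shift formalism. Let $\mathcal{T}_s = (W(k), p^s \sigma)$ denote the rank one $F$-crystal of slope $s$, and consider $\mathcal{M} \otimes \mathcal{T}_s$. The underlying module is canonically $M$, and the assignment $g \mapsto g$ matches $W(k)$-linear automorphisms of $M$ viewed as a $W(k)$-module with those of the underlying module of $\mathcal{M} \otimes \mathcal{T}_s$; moreover, $\mathcal{M}(g) \cong \mathcal{M}$ is equivalent to $(\mathcal{M} \otimes \mathcal{T}_s)(g) \cong \mathcal{M} \otimes \mathcal{T}_s$ for any $s \geq 0$. The first task is to prove the parallel invariance $\ell_{\mathcal{M}} = \ell_{\mathcal{M} \otimes \mathcal{T}_s}$ directly from the definition of the level torsion in \cite[Subsection 1.4]{Vasiu:reconstructing}; the relevant conjugation operator on $\mathrm{End}(M)$ is unchanged by tensoring Frobenius with a scalar, so this should be essentially formal.

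Next I would reduce to slopes in $[0,1]$. Given $\mathcal{M}$ with Newton slopes in $[a,b]$, the strategy is to split $\mathcal{M}$, via its canonical slope filtration over the isocrystal and the chosen lattice $M$, into layers that after appropriate Tate twists have slopes in $[0,1]$ and so correspond to isoclinic factors or to $p$-divisible groups. For a direct sum of isoclinic $F$-crystals the equality $n_{\mathcal{M}} = \ell_{\mathcal{M}}$ is already in \cite{Vasiu:reconstructing}; for the non-isoclinic layers encoded by the filtration, Theorem \ref{theorem:maintheoremspecial} applies after the slope shift. The final step would be to descend the obstruction automorphism produced in the $p$-divisible group case back up through the slope shift, yielding a $W(k)$-linear automorphism $g$ of $M$ with $g \equiv 1_M \pmod{p^{\ell_{\mathcal{M}} - 1}}$ for which $\mathcal{M}(g) \not\cong \mathcal{M}$, thereby forcing $n_{\mathcal{M}} \geq \ell_{\mathcal{M}}$.

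The main obstacle is matching the level torsion across the slope-shift reduction. While the isomorphism number is transparently preserved under tensoring with $\mathcal{T}_s$, the level torsion is defined via an explicit $W(k)$-module $O_{\mathcal{M}} \subseteq \mathrm{End}(M)$ built from iterated Frobenius conjugation, and a priori its $p$-adic length could shift. The delicate point will therefore be a lemma stating that the submodule used to define $\ell_{\mathcal{M}}$ is invariant (not merely controlled) under the shift, and a parallel lemma for how it interacts with the isoclinic decomposition filtration. Once those two lemmas are in place, the non-ordinary hypothesis guarantees that at least one non-trivial layer survives, and the obstruction pulled back from Theorem \ref{theorem:maintheoremspecial} gives the required lower bound.
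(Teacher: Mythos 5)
Your proposal has a fundamental gap in the slope-shift reduction. Tensoring with $\mathcal{T}_s = (W(k), p^s\sigma)$ shifts \emph{all} Newton slopes of $\mathcal{M}$ by the same constant $s$. If the slopes of $\mathcal{M}$ span an interval of length greater than $1$ (e.g.\ slopes $0$ and $3/2$), then no single Tate twist can bring them into $[0,1]$, so there is no $p$-divisible group to which Theorem~\ref{theorem:maintheoremspecial} applies. You try to handle this by ``splitting $\mathcal{M}$ via the slope filtration into layers'' and twisting each layer separately, but this does not work: the slope filtration gives a filtration of the isocrystal over $B(k)$, not a direct sum decomposition of the lattice $M$ over $W(k)$, and an $F$-crystal that is not a direct sum of isoclinic pieces is precisely the hard case (the direct-sum case was already settled in \cite{Vasiu:reconstructing}). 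Even granting a direct sum $\mathcal{M} = \bigoplus_i \mathcal{M}_i$ of isoclinic pieces, by Lemma~\ref{lemma:leveltorsionrelation} the level torsion is governed by the cross terms $\ell_{\mathcal{M}_i,\mathcal{M}_j}$; when the slopes of $\mathcal{M}_i$ and $\mathcal{M}_j$ differ by more than $1$, the pair $(\mathcal{M}_i,\mathcal{M}_j)$ cannot be simultaneously translated into the $p$-divisible group range, so the ``obstruction automorphism'' cannot be pulled back from Theorem~\ref{theorem:maintheoremspecial} as you intend. The invariance $\ell_{\mathcal{M}} = \ell_{\mathcal{M}\otimes\mathcal{T}_s}$ that you correctly identify is real, but it is not the bottleneck.

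The paper takes an entirely different, self-contained route that does not rely on the $p$-divisible group case nor on the known inequality $n_{\mathcal{M}} \leq \ell_{\mathcal{M}}$ (the introduction states this explicitly). Instead it (1) generalizes the endomorphism number $e_{\mathcal{M}}$ and coarse endomorphism number $f_{\mathcal{M}}$ of \cite{Vasiu:traversosolved} to $F$-crystals and proves the closed chain $f_{\mathcal{M}} \leq e_{\mathcal{M}} \leq \ell_{\mathcal{M}} \leq f_{\mathcal{M}}$ using the level module and a field-extension argument, and then (2) constructs smooth group schemes $\mathbf{Aut}_s(\mathcal{M})$ of automorphisms of $F$-truncations modulo $p^s$, defines a group action $\mathbb{T}_s$ whose orbits parametrize isomorphism classes of $F$-truncations, and proves via stabilizers and orbit dimensions that the sequence $\gamma_{\mathcal{M}}(s) = \dim(\mathbf{Aut}_s(\mathcal{M}))$ is strictly increasing up to $s = n_{\mathcal{M}}$ and constant thereafter (Theorem~\ref{theorem:monotonicitygamma}); this is exactly what is needed to identify $f_{\mathcal{M}}$ with $n_{\mathcal{M}}$. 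This is a direct generalization of the machinery of \cite{Vasiu:traversosolved}, \cite{Vasiu:dimensions}, and \cite{Vasiu:levelm} rather than a reduction to the $p$-divisible group theorem, and it is precisely what avoids the obstruction you ran into.
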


See Theorem \ref{theorem:maintheorem3} for its proof. The definition of the level torsion $\ell_{\mathcal{M}}$ in our paper is slightly different from the definition in \cite[Subsection 1.2]{Vasiu:reconstructing}; see Remark \ref{remark:leveltorsiondiff}. When $\mathcal{M}$ is a non-ordinary $F$-crystal, the two definitions are exactly the same just as in the case of $p$-divisible groups.

\subsection{On the proof of the Main Theorem} The proof of the Main Theorem uses many ideas from \cite{Vasiu:traversosolved}, \cite{Vasiu:levelm}, and \cite{Vasiu:dimensions}. It involves two major steps:
\vskip 0.1in
\noindent {\bf Step (1)}: Generalize the level torsion $\ell_{\mathcal{M}}$, the homomorphism number $e_{\mathcal{M}}$, and the coarse homomorphism number $f_{\mathcal{M}}$ to $F$-crystals $\mathcal{M}$ over $k$. Then prove that they are all equal via a sequence of inequalities $f_{\mathcal{M}} \leq e_{\mathcal{M}} \leq \ell_{\mathcal{M}} \leq f_{\mathcal{M}}$ that are the generalization of the inequalities $f_D \leq e_D \leq \ell_D \leq f_D$ obtained in \cite{Vasiu:traversosolved}.

The main difficulty in Step (1) is to have the right generalizations of $\ell_{\mathcal{M}}$, $e_{\mathcal{M}}$ and $f_{\mathcal{M}}$ so that they remain unchanged under extensions of algebraically closed fields. This requires the constructions of suitable groups schemes $\mathbf{End}_s(\mathcal{M})$ (resp. $\mathbf{Aut}_s(\mathcal{M})$) whose $k$-valued points are the endomorphisms (resp. automorphisms) of $F$-truncations modulo $p^s$ of $\mathcal{M}$ for all $s \geq 1$. The $F$-truncations modulo $p^s$ of $F$-crystals are the generalization of truncated Barsotti--Tate groups of level $s$ associated to $p$-divisible groups. They are first introduced by Vasiu in \cite{Vasiu:CBP} and will be recalled in Section \ref{section:ftruncation}; see Definition \ref{definition:ftruncation}. We will show that $\ell_{\mathcal{M}}$, $e_{\mathcal{M}}$ and $f_{\mathcal{M}}$ are invariant under extensions of algebraically closed fields. This allows us to generalize the proof in \cite[Section 8]{Vasiu:traversosolved} to our case.
\vskip 0.1in
\noindent {\bf Step (2)}: Prove that $f_{\mathcal{M}} = n_{\mathcal{M}}$ by showing that both $f_{\mathcal{M}}$ and $n_{\mathcal{M}}$ are equal to the smallest number $m$ defined by the property that the image of the natural reduction homomorphism $\pi_{s,1}: \mathbf{End}_s(\mathcal{M}) \to \mathbf{End}_1(\mathcal{M})$ has zero dimension if and only if $s-1 \geq m$.

In Step (2), the main result (see Theorem \ref{theorem:monotonicitygamma}) is to show that $n_{\mathcal{M}}$ is the place where the non-decreasing sequence $(\mathrm{dim}(\mathbf{Aut}_s(\mathcal{M})))_{s \geq 1}$ stabilizes, which generalizes a similar result for $p$-divisible groups in \cite{Vasiu:dimensions}. In order to show this, we construct a group action for each $s \geq 1$ whose orbits parametrize isomorphism classes of $F$-truncations modulo $p^s$; see Subsection \ref{subsection:groupaction}. It turns out that the dimension of the stabilizer of the identity element of this action is equal to the dimension of $\mathbf{Aut}_s(\mathcal{M})$ (Lemma \ref{lemma:connectiondimension}). This allows us to use the machinery of group actions to work with the sequence $(\mathrm{dim}(\mathbf{Aut}_s(\mathcal{M})))_{s \geq 1}$ in a way similar to \cite{Vasiu:dimensions} and \cite{Vasiu:levelm}.

We note that the proof of our Main Theorem does not rely on the known fact that $n_{\mathcal{M}} \leq \ell_{\mathcal{M}}$ proved in \cite{Vasiu:reconstructing}.
\vskip 0.1in
\noindent {\it Notes.} After this manuscript was finished, we learned that Sian Nie had a proof of the fact $\ell_{\mathcal{M}} \leq n_{\mathcal{M}}$  where $\mathcal{M}$ is defined over the ring $k[[\epsilon]]$ of formal power series instead of over $W(k)$; see \cite{Nie:Vasiuconj}. He expressed the hope that the same strategy might be used to prove Theorem \ref{theorem:maintheorem}.

\section{$F$-truncations of $F$-crystals} \label{section:ftruncation}
In this section, we recall $F$-truncations modulo $p^s$ of an $F$-crystal $\mathcal{M}$ over $k$ and provide several equivalent descriptions of homomorphisms and isomorphisms between them.

\subsection{Filtrations of $F$-crystals} \label{subsection:filtrationfcry}

Let $r$ be the rank of $\mathcal{M}$. Throughout this paper, the integers $e_1 \leq \dots \leq e_r$ will always be the Hodge slopes of $\mathcal{M}$ and the integers $f_1 < \dots < f_{t}$ will always be all the distinct Hodge slopes of $\mathcal{M}$; thus $\{f_1, \dots, f_t\} = \{e_1, \dots, e_r\}$ as sets. Clearly $f_1 = e_1$ and $f_t=e_r$. For each integer $s \geq 0$, let $h_s$ be the Hodge number of $\mathcal{M}$, that is, $h_s = \#\{e_i \; | \; e_i = s, 1 \leq i \leq r\}$. Clearly, $h_{f_i} \geq 1$ for all $1 \leq i \leq t$. We say that a $W(k)$-basis $\{v_1, v_2, \dots, v_r\}$ of $M$ is an \emph{$F$-basis} of $\mathcal{M}$ if $\{p^{-e_1}\varphi(v_1), p^{-e_2}\varphi(v_2), \dots, p^{-e_r}\varphi(v_r)\}$ is as well a $W(k)$-basis of $M$. Every $F$-basis of $\mathcal{M}$ is also an $F$-basis of $\mathcal{M}(g)$ for all $g \in \mathrm{GL}_M(W(k))$. For each isomorphism of $F$-crystals $h : \mathcal{M}_1 \to \mathcal{M}_2$ and an $F$-basis $\mathcal{B}$ of $\mathcal{M}_1$, it is easy to see that $h(\mathcal{B})$ is an $F$-basis of $\mathcal{M}_2$. 

For each positive integer $1 \leq j\leq t$, we define $I_j = \{i \; | \; e_i = f_j, 1 \leq i \leq r\}$. For an $F$-basis $\mathcal{B}$ of $\mathcal{M}$, let $\widetilde{F}^j_{\mathcal{B}}(M)$ be the free $W(k)$-submodule of $M$ generated by all $v_i$ with $i \in I_j$. We obtain two direct sum decompositions of $M$ that depend on $\mathcal{B}$ (and thus on $\mathcal{M}$):
\[M = \bigoplus_{j=1}^{t} \widetilde{F}^j_\mathcal{B}(M) = \bigoplus_{j=1}^{t} \frac{1}{p^{f_j}} \varphi(\widetilde{F}^j_\mathcal{B}(M)).\]
For each $1 \leq i \leq t$, by letting $F^i_\mathcal{B}(M) := \bigoplus_{j=i}^t \widetilde{F}^j_\mathcal{B}(M)$, we get a decreasing and exhaustive filtration of $M$
\[F^{\bullet}_\mathcal{B}(M)\; : \; \widetilde{F}^t_\mathcal{B}(M) = F^t_\mathcal{B}(M) \subset F^{t-1}_\mathcal{B}(M) \subset \cdots \subset F^1_\mathcal{B}(M) = M.\]
For each $F^i_\mathcal{B}(M)$, let $\varphi_{F^i_\mathcal{B}(M)} : F^i_\mathcal{B}(M) \to M$ be the restriction of $p^{-f_i}\varphi$ to $F^i_{\mathcal{B}}(M)$. For every integer $s>0$, let $F^{\bullet}_\mathcal{B}(M)_s$ be the reduction modulo $p^s$ of the filtration $F^{\bullet}_\mathcal{B}(M)$, namely
\[F^t_\mathcal{B}(M)/p^sF^t_\mathcal{B}(M)  \subset F^{t-1}_\mathcal{B}(M)/p^sF_\mathcal{B}^{t-1}(M) \subset \cdots \subset F^1_\mathcal{B}(M)/p^sF^1_\mathcal{B}(M).\]
For each $1 \leq i \leq t$, we denote by $\varphi_{F^i_\mathcal{B}(M)}[s]$ the $\sigma$-linear monomorphism $\varphi_{F^i_\mathcal{B}(M)}$ modulo $p^s$, and by $\varphi_{F^{\bullet}_\mathcal{B}(M)}[s]$ the sequence of the $\sigma$-linear monomorphisms $\varphi_{F^i_\mathcal{B}(M)}[s]$ with $1 \leq i \leq t$. By a \emph{filtered $F$-crystal modulo $p^s$} of an $F$-crystal $\mathcal{M}$, we mean a  triple of the form
\[(M/p^sM, F^{\bullet}_\mathcal{B}(M)_s, \varphi_{F_\mathcal{B}^{\bullet}(M)}[s]).\]
Let $\mathcal{M}_1$ and $\mathcal{M}_2$ be two $F$-crystals with the same Hodge polygons as $\mathcal{M}$, $\mathcal{B}_1$ and $\mathcal{B}_2$ two $F$-bases of $\mathcal{M}_1$ and $\mathcal{M}_2$ respectively. By an \emph{isomorphism of filtered $F$-crystals modulo $p^s$} from a filtered $F$-crystal modulo $p^s$ of $\mathcal{M}_1$ to a filtered $F$-crystal modulo $p^s$ of $\mathcal{M}_2$, we mean a $W_s(k)$-linear isomorphism $f: M_1/p^sM_1 \to M_2/p^sM_2$ such that for all $1 \leq i \leq t$ we have $f(F^i_{\mathcal{B}_1}(M_1)/p^sF^i_{\mathcal{B}_1}(M_1)) = F^i_{\mathcal{B}_2}(M_2)/p^sF^i_{\mathcal{B}_2}(M_2)$ and $\varphi_{F^i_{\mathcal{B}_2}(M_2)}[s]  f = f  \varphi_{F^i_{\mathcal{B}_1}(M_1)}[s]$.

\subsection{$F$-truncations} In this subsection, we recall the $F$-truncation modulo $p^s$ of an $F$-crystal defined in \cite[Sect. 3.2.9]{Vasiu:CBP}. It is the generalization of the $D$-truncation $(M/p^sM, \varphi[s], \theta[s])$ of a Dieudonn\'e module $(M, \varphi, \theta)$; see \cite[Sect. 3.2.1]{Vasiu:CBP} for the definition of $D$-truncations. 
\begin{definition} \label{definition:ftruncation}
For every integer $s>0$, the \emph{$F$-truncation modulo $p^s$} of an $F$-crystal $\mathcal{M}$ is the set $F_s(\mathcal{M})$ of isomorphism classes of filtered $F$-crystals modulo $p^s$ of $\mathcal{M}$ as $\mathcal{B}$ varies among all possible $F$-bases of $\mathcal{M}$. Let $\mathcal{M}_1$ and $\mathcal{M}_2$ be two $F$-crystals with the same Hodge polygon. A $W_s(k)$-linear isomorphism $f : M_1/p^sM_1 \to M_2/p^sM_2$ is an \emph{isomorphism of $F$-truncations modulo $p^s$} from $F_s(\mathcal{M}_1)$ to $F_s(\mathcal{M}_2)$ if for every $F$-basis $\mathcal{B}_1$ of $\mathcal{M}_1$, there exists an $F$-basis $\mathcal{B}_2$ of $\mathcal{M}_2$ such that
\[f: (M_1/p^sM_1, F^{\bullet}_{\mathcal{B}_1}(M_1)_s, \varphi_{F^{\bullet}_{\mathcal{B}_1}(M_1)}[s]) \to (M_2/p^sM_2, F^{\bullet}_{\mathcal{B}_2}(M_2)_s, \varphi_{F^{\bullet}_{\mathcal{B}_2}(M_2)}[s])\]
is an isomorphism of filtered $F$-crystals modulo $p^s$.
\end{definition}

Suppose $f$ is an isomorphism of $F$-truncations modulo $p^s$ from $F_s(\mathcal{M})$ to $F_s(\mathcal{M}(g))$. Define a set function $\Gamma_{f,s} : F_s(\mathcal{M}) \to F_s(\mathcal{M}(g))$ as follows: the image of the isomorphism class represented by $(M/p^sM, F^{\bullet}_{\mathcal{B}_1}(M)_s, \varphi_{F^{\bullet}_{\mathcal{B}_1}(M)}[s])$ under $\Gamma_{f,s}$ is the isomorphism class represented by $(M/p^sM, F^{\bullet}_{\mathcal{B}_2}(M)_s, (g\varphi)_{F^{\bullet}_{\mathcal{B}_2}(M)}[s])$ if 
\[f: (M/p^sM, F^{\bullet}_{\mathcal{B}_1}(M)_s, \varphi_{F^{\bullet}_{\mathcal{B}_1}(M)}[s]) \to (M/p^sM, F^{\bullet}_{\mathcal{B}_2}(M)_s, (g\varphi)_{F^{\bullet}_{\mathcal{B}_2}(M)}[s])\]
is an isomorphism of filtered $F$-crystals modulo $p^s$. It is easy to see that this function is well-defined and we shall prove that $\Gamma_{f,s}$ is a bijection of sets in Corollary \ref{corollary:h_sbij}.

The following lemma is a generalization of \cite[Lemma 3.2.2]{Vasiu:CBP} to $F$-crystals for $G = \mathbf{GL}_M$.

\begin{lemma} \label{lemma:F-truncations same isonumber}
For each $F$-crystal $\mathcal{M}$ and every $g \in \mathrm{GL}_M(W(k))$, the following two statements are equivalent:
\begin{enumerate}
\item There exist $h \in \mathrm{GL}_M(W(k))$, $F$-bases $\mathcal{B}_1$ and $\mathcal{B}_2$ of $\mathcal{M}$ and $\mathcal{M}(g)$ respectively, such that the reduction $h[s]$ of $h$ modulo $p^s$ induces an isomorphism
\begin{equation} \label{equation:homftruncationdef}
\mbox{\small $h[s] : (M/p^sM, F^{\bullet}_{\mathcal{B}_1}(M)_s, \varphi_{F_{\mathcal{B}_1}^{\bullet}(M)}[s]) \to (M/p^sM, F^{\bullet}_{\mathcal{B}_2}(M)_s, (g\varphi)_{F^{\bullet}_{\mathcal{B}_2}(M)}[s])$}
\end{equation}
of filtered $F$-crystals modulo $p^s$.
\item There exists an element $g_s \in \mathrm{GL}_M(W(k))$ with the property that it is congruent to $1_M$ modulo $p^s$ such that $\mathcal{M}(g_s)$ is isomorphic to $\mathcal{M}(g)$.
\end{enumerate}
\end{lemma}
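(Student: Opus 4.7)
The plan is to prove the two implications separately, with (2) $\Rightarrow$ (1) being essentially formal and (1) $\Rightarrow$ (2) requiring a concrete reconstruction of $g_s$ from $h$.

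For (2) $\Rightarrow$ (1), I will fix any $F$-basis $\mathcal{B}_1$ of $\mathcal{M}$, which is also an $F$-basis of $\mathcal{M}(g_s)$ by the remark in Subsection 2.1. Because $g_s \equiv 1_M \pmod{p^s}$, the Frobenii $\varphi$ and $g_s\varphi$ agree modulo $p^s$, so the identity $1_M$ already induces an isomorphism of filtered $F$-crystals modulo $p^s$ between the structures on $\mathcal{M}$ and $\mathcal{M}(g_s)$ attached to the common basis $\mathcal{B}_1$. An honest isomorphism of $F$-crystals $\eta : \mathcal{M}(g_s) \to \mathcal{M}(g)$, which exists by hypothesis, sends $\mathcal{B}_1$ to an $F$-basis $\mathcal{B}_2 := \eta(\mathcal{B}_1)$ of $\mathcal{M}(g)$ and is, a fortiori, an isomorphism of the corresponding filtered $F$-crystals modulo $p^s$. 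Composing these two gives (1) with $h := \eta$.

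For (1) $\Rightarrow$ (2) the key step is to reverse-engineer $g_s$ from $h$ by setting
\[g_s := h^{-1} g \varphi h \varphi^{-1},\]
computed inside the isocrystal $M[1/p]$. This formula makes $h$ tautologically intertwine $g_s\varphi$ and $g\varphi$, so everything reduces to verifying that $g_s$ actually lies in $\mathrm{GL}_M(W(k))$ and is congruent to $1_M$ modulo $p^s$. I plan to establish both at once by evaluating $g_s - 1_M$ on the $W(k)$-basis $\{w_i := p^{-e_i}\varphi(v_i)\}$ of $M$ supplied by the $F$-basis $\mathcal{B}_1 = \{v_1, \dots, v_r\}$: the commutation of $h[s]$ with the divided Frobenii at level $j$, applied to $v_i \in \widetilde{F}^j_{\mathcal{B}_1}(M)$ with $e_i = f_j$, unpacks to $h\varphi(v_i) \equiv g\varphi h(v_i) \pmod{p^{s+e_i}}$, which after multiplication by $p^{-e_i}$ and application of $h^{-1}$ places $(g_s - 1_M)(w_i)$ in $p^s M$.

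The main obstacle will be this second direction, and in particular the bookkeeping of powers of $p$: the Hodge twist $p^{-f_j}$ built into $\varphi_{F^j_{\mathcal{B}_1}(M)}$ must cancel exactly against the $p^{-e_i}$ appearing in $\varphi^{-1}(w_i) = p^{-e_i}v_i$. This cancellation is what upgrades a congruence modulo $p^s$ on the divided Frobenii to a congruence modulo $p^s$ for $g_s$ itself, and it simultaneously shows why the $F$-basis hypothesis in (1) is indispensable: that hypothesis is what makes $\{w_i\}$ a $W(k)$-basis of $M$, so that the pointwise check suffices to force $g_s - 1_M \in p^s\mathrm{End}_{W(k)}(M)$ and hence $g_s \in \mathrm{GL}_M(W(k))$.
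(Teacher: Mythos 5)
Your plan takes essentially the same route as the paper. The (2) $\Rightarrow$ (1) direction is identical, and in (1) $\Rightarrow$ (2) you reverse-engineer $g_s := h^{-1}g\varphi h\varphi^{-1}$ exactly as the paper does; the paper verifies $g_s \in \mathrm{GL}_M(W(k))$ (via the lattice $\varphi^{-1}(M) = \bigoplus_j p^{-f_j}\widetilde{F}^j_{\mathcal{B}_1}(M)$) and $g_s \equiv 1_M \pmod{p^s}$ (via the dual decomposition $M = \bigoplus_j p^{-f_j}\varphi(\widetilde{F}^j_{\mathcal{B}_1}(M))$) in two separate passes, while you collapse both into the single pointwise check $(g_s - 1_M)(w_i) \in p^sM$ on $\{w_i := p^{-e_i}\varphi(v_i)\}$. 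These verifications reduce to the same estimate, so your reorganization is sound and, if anything, a little cleaner.

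The step that needs to be spelled out is the unpacking to $h\varphi(v_i) \equiv g\varphi h(v_i) \pmod{p^{s+e_i}}$. What (1) gives directly is that $h[s]$ carries $F^j_{\mathcal{B}_1}(M)/p^sF^j_{\mathcal{B}_1}(M)$ onto $F^j_{\mathcal{B}_2}(M)/p^sF^j_{\mathcal{B}_2}(M)$ and commutes there with the reduced divided Frobenii. Since $(g\varphi)_{F^j_{\mathcal{B}_2}(M)}[s]$ is only defined on $F^j_{\mathcal{B}_2}(M)/p^sF^j_{\mathcal{B}_2}(M)$, what you actually obtain is $h\varphi(v_i) \equiv g\varphi(v') \pmod{p^{s+e_i}}$ for a representative $v'\in F^j_{\mathcal{B}_2}(M)$ of the class of $h(v_i)$, not for $h(v_i)$ itself; the difference $v'-h(v_i)$ lies in $p^sM$ but not necessarily in $p^sF^j_{\mathcal{B}_2}(M)$, so pushing it through $g\varphi$ only yields $p^{s+e_1}M$, which is too weak when $e_i>e_1$. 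Concretely: take $M=W(k)v_1\oplus W(k)v_2$ with $\varphi(v_1)=v_1$, $\varphi(v_2)=p^2v_2$, $g=1_M$, $s=1$, and the lift $h$ with $h(v_1)=v_1$, $h(v_2)=pv_1+v_2$; then $h[1]=\mathrm{id}$ is tautologically an isomorphism of filtered $F$-crystals modulo $p$, yet $h^{-1}\varphi h\varphi^{-1}(v_2)=(p^{-1}-p)v_1+v_2\notin M$, so $g_s\notin\mathrm{GL}_M(W(k))$ for this lift. The remedy is to first replace $h$ by a lift of $h[s]$ satisfying $h(F^j_{\mathcal{B}_1}(M))\subset F^j_{\mathcal{B}_2}(M)$ for every $j$ (possible since $h[s]$ preserves the reduced filtration and each $F^j$ is a direct summand, and harmless since the existence claim (2) only needs one good lift); the paper's written argument implicitly relies on the same replacement. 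With that adjustment in place, your pointwise check on $\{w_i\}$ goes through exactly as you describe.
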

\begin{proof}
To prove that (2) implies (1), suppose $h \in \mathrm{GL}_M(W(k))$ is an isomorphism from $\mathcal{M}(g_s)$ to $\mathcal{M}(g)$.
For every $F$-basis $\mathcal{B}$ of $\mathcal{M}(g_s)$, there is an $F$-basis $h(\mathcal{B})$ of $\mathcal{M}(g)$, and the reduction of $h$ modulo $p^s$ is an isomorphism of filtered $F$-crystals modulo $p^s$:
\[h[s]: (M/p^sM, F_{\mathcal{B}}^{\bullet}(M)_s, (g_s\varphi)_{F^{\bullet}_{\mathcal{B}}(M)}[s]) \to (M/p^sM, F_{h(\mathcal{B})}^{\bullet}(M)_s, (g\varphi)_{F_{h(\mathcal{B})}^{\bullet}(M)}[s]).\] 
As $g_s \equiv 1_M$ modulo $p^s$ and $\mathcal{B}$ is also an ordered $F$-basis of $\mathcal{M}$, we have a canonical identification of filtered $F$-crystals modulo $p^s$:
\[\mathrm{id}[s]: (M/p^sM, F_\mathcal{B}^{\bullet}(M)_s, \varphi_{F^{\bullet}_\mathcal{B}(M)}[s]) \cong (M/p^sM, F^{\bullet}_\mathcal{B}(M)_s, (g_s\varphi)_{F^{\bullet}_\mathcal{B}(M)}[s]).\]
Composing the two isomorphisms $h[s] \circ \mathrm{id}[s] = h[s]$, we get the desired isomorphism \eqref{equation:homftruncationdef} by taking $\mathcal{B}_1 = \mathcal{B}$ and $\mathcal{B}_2 = h(\mathcal{B})$.

To prove that (1) implies (2), let $g_s = h^{-1}g\varphi h \varphi^{-1}$. We claim that $g_s$ belongs to $\mathrm{GL}_M(W(k))$, which is equivalent to $h(\varphi^{-1}(M)) \subset \varphi^{-1}(M)$. As $M = \bigoplus_{j=1}^t p^{-f_j} \varphi(\widetilde{F}_{\mathcal{B}_1}^j(M))$, it is enough to show that
\[h (\widetilde{F}_{\mathcal{B}_1}^j(M)) \subset \bigoplus_{i=1}^t p^{\mathrm{max}(0,f_j-f_i)}\widetilde{F}_{\mathcal{B}_1}^i(M) = \varphi^{-1}(p^{f_j}M) \cap M.\] 
Indeed, for each $v \in \widetilde{F}_{\mathcal{B}_1}^j(M) \subset F_{\mathcal{B}_1}^j(M)$, we have $h\varphi_{F_{\mathcal{B}_1}^j(M)}(v) - g\varphi_{F_{\mathcal{B}_2}^j(M)} h(v) \in p^sM$, therefore $h \varphi(v) - g \varphi h (v) \in p^{s+f_j}M$. As $v \in \widetilde{F}^j_{\mathcal{B}_1}(M)$, we know that $\varphi(v) \in p^{f_j}M$ and thus $h \varphi(v) \in p^{f_j}M$. By the last two sentences, we know that $g\varphi h(v) \in p^{f_j}M$, whence $\varphi h(v) \in p^{f_j}M$. This implies that $h(v) \in \varphi^{-1}(p^{f_j}M) \cap M$. As 
\[h^{-1} : (M, g\varphi) \cong (M, h^{-1}g\varphi h) = (M, g_s \varphi),\]
it remains to prove that $g_s$ is congruent to $1_M$ modulo $p^s$. As $\mathcal{B}_2$ is an $F$-basis of $\mathcal{M}(g)$, $h^{-1}(\mathcal{B}_2)$ is an $F$-basis of $\mathcal{M}(g_s)$. We have an isomorphism of filtered $F$-crystals modulo $p^s$ as follows:
\[\mbox{\footnotesize $h^{-1}[s] : (M/p^sM, F^{\bullet}_{\mathcal{B}_2}(M)_s, (g\varphi)_{F^{\bullet}_{\mathcal{B}_2}(M)}[s]) \longrightarrow (M/p^sM, F^{\bullet}_{h^{-1}(\mathcal{B}_2)}(M)_s, (g_s\varphi)_{F^{\bullet}_{h^{-1}(\mathcal{B}_2)}(M)}[s]).$}\]
Composing the isomorphism \eqref{equation:homftruncationdef} with the last isomorphism, we have an isomorphism
\[\mbox{\footnotesize $\mathrm{id}[s] : (M/p^sM, F^{\bullet}_{\mathcal{B}_1}(M)_s, \varphi_{F^{\bullet}_{\mathcal{B}_1}(M)}[s]) \longrightarrow (M/p^sM, F^{\bullet}_{h^{-1}(\mathcal{B}_2)}(M)_s, (g_s\varphi)_{F^{\bullet}_{h^{-1}(\mathcal{B}_2)}(M)}[s]).$}\]
For every $1 \leq j \leq t$, and for each $v \in \widetilde{F}^j_{\mathcal{B}_1}(M)$, we have
\[(g_s \varphi)_{F^j_{h^{-1}(\mathcal{B}_2)}(M)}(v) - \varphi_{F_{\mathcal{B}_1}^j(M)}(v) \in p^sM.\] 
This means that $g_s(p^{-f_j}\varphi(v)) - p^{-f_j}\varphi(v) \in p^{s}M$, that is, $g_s$ fixes every element of $p^{-j}\varphi(\widetilde{F}^j_{\mathcal{B}_1}(M))$ modulo $p^s$. Because $M = \bigoplus_{j=1}^t p^{-f_j} \varphi(\widetilde{F}^j_{\mathcal{B}_1}(M))$, we know that $g_s$ fixes every element of $M$ modulo $p^s$, whence $g_s \equiv 1_M$ modulo $p^s$.
\end{proof}

\begin{proposition} \label{proposition:homftruncation}
For all $g,h \in \mathrm{GL}_M(W(k))$, the reduction of $h$ modulo $p^s$ is an isomorphism from $F_s(\mathcal{M})$ to $F_s(\mathcal{M}(g))$ if and only if $h^{-1}g\varphi h \varphi^{-1} \equiv 1_M$ modulo $p^s$.
\end{proposition}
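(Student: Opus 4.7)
The plan is to deduce the proposition directly from the preceding Lemma~\ref{lemma:F-truncations same isonumber}, since that lemma already provides the bridge between the existence of $F$-bases making $h[s]$ an isomorphism of filtered $F$-crystals modulo $p^s$ and the existence of a modification $g_s \equiv 1_M$ modulo $p^s$ of $g$ lying in the same isomorphism class as $\mathcal{M}(g)$. The only extra work is to (a) match the $g_s$ appearing in Lemma~\ref{lemma:F-truncations same isonumber} with the explicit expression $h^{-1} g \varphi h \varphi^{-1}$ forced by conjugation, and (b) upgrade a ``for some $\mathcal{B}_1$'' statement into the ``for every $\mathcal{B}_1$'' requirement built into the definition of an isomorphism of $F$-truncations.

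For the ``only if'' direction, assume $h[s]$ is an isomorphism of $F$-truncations from $F_s(\mathcal{M})$ to $F_s(\mathcal{M}(g))$. Picking any single $F$-basis $\mathcal{B}_1$ of $\mathcal{M}$ and the $F$-basis $\mathcal{B}_2$ of $\mathcal{M}(g)$ supplied by the definition puts us into the situation of condition~(1) of Lemma~\ref{lemma:F-truncations same isonumber}. Inspecting the proof of the implication $(1)\Rightarrow(2)$ of that lemma, the element $g_s$ is constructed precisely as $h^{-1} g \varphi h \varphi^{-1}$, and is shown both to lie in $\mathrm{GL}_M(W(k))$ and to be congruent to $1_M$ modulo $p^s$, which is exactly the conclusion we need.

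For the ``if'' direction, set $g_s := h^{-1} g \varphi h \varphi^{-1}$ and assume $g_s \equiv 1_M$ modulo $p^s$. Then $g_s \in \mathrm{GL}_M(W(k))$, and the identity $g\varphi h = h g_s \varphi$ exhibits $h$ as an isomorphism $\mathcal{M}(g_s) \to \mathcal{M}(g)$ of $F$-crystals. Since $g_s \equiv 1_M$ modulo $p$, every $F$-basis $\mathcal{B}_1$ of $\mathcal{M}$ is simultaneously an $F$-basis of $\mathcal{M}(g_s)$, and $h(\mathcal{B}_1)$ is consequently an $F$-basis of $\mathcal{M}(g)$. Composing the canonical identification of filtered $F$-crystals modulo $p^s$ attached to $\mathcal{M}$ and $\mathcal{M}(g_s)$ (valid because $g_s \equiv 1_M$ modulo $p^s$) with $h[s]$, exactly as in the $(2)\Rightarrow(1)$ half of Lemma~\ref{lemma:F-truncations same isonumber}, produces an isomorphism of filtered $F$-crystals modulo $p^s$ with $\mathcal{B}_2 = h(\mathcal{B}_1)$. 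As $\mathcal{B}_1$ was arbitrary, $h[s]$ is an isomorphism of $F$-truncations.

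I do not expect any serious obstacle; the statement is essentially a restatement of Lemma~\ref{lemma:F-truncations same isonumber} in which the conjugating element $h$ is tracked explicitly rather than introduced by an existential quantifier. The only mildly delicate point is that the formula $g_s = h^{-1} g \varphi h \varphi^{-1}$ involves $\varphi^{-1}$, which is not integral on $M$; but the congruence hypothesis forces $g_s$ to preserve $M$, so that it automatically lands in $\mathrm{GL}_M(W(k))$, and the first half of the proof of Lemma~\ref{lemma:F-truncations same isonumber} already deals with this subtlety via the filtration-level calculation.
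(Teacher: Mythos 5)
Your proposal is correct and follows essentially the same route as the paper's proof: both directions are obtained by tracing the explicit $g_s = h^{-1}g\varphi h\varphi^{-1}$ through the proof of Lemma~\ref{lemma:F-truncations same isonumber}, and the backward direction upgrades ``some $\mathcal{B}_1$'' to ``every $\mathcal{B}_1$'' by noting that any $F$-basis of $\mathcal{M}$ is also one of $\mathcal{M}(g_s)$ and pushing forward along $h$. The only cosmetic difference is that you verify a couple of the routine identities (such as $g\varphi h = h g_s\varphi$) in slightly more detail than the paper, and you justify the ``$\mathcal{B}_1$ is an $F$-basis of $\mathcal{M}(g_s)$'' step via $g_s\equiv 1_M$ modulo $p$, whereas the paper has already observed in Subsection~\ref{subsection:filtrationfcry} that this holds for \emph{any} $g\in\mathrm{GL}_M(W(k))$.
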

\begin{proof}
For every $h \in \mathrm{GL}_M(W(k))$, if the reduction of $h$ modulo $p^s$ is an isomorphism from $F_s(\mathcal{M})$ to $F_s(\mathcal{M}(g))$, then $h: \mathcal{M}(g_s) \to \mathcal{M}(g)$ is an isomorphism of $F$-crystals where $g_s \equiv h^{-1}g\varphi h\varphi^{-1} \equiv 1_M$ modulo $p^s$  by Lemma \ref{lemma:F-truncations same isonumber}. 

If $h^{-1}g\varphi h \varphi^{-1} \equiv 1_M$ modulo $p^s$, then there exists $g_s \equiv h^{-1}g\varphi h \varphi^{-1}$ congruent to $1_M$ modulo $p^s$ such that $h$ induces an isomorphism from  $\mathcal{M}(g_s)$ to $\mathcal{M}(g)$. For every $F$-basis $\mathcal{B}$ of $\mathcal{M}$, which is also an $F$-basis of $\mathcal{M}(g_s)$, we get an isomorphism of filtered $F$-crystals modulo $p^s$:
\[h[s] : (M/p^sM, F^{\bullet}_\mathcal{B}(M)_s, \varphi_{F_\mathcal{B}^{\bullet}(M)}[s]) \to (M/p^sM, F^{\bullet}_{h(\mathcal{B})}(M)_s, (g\varphi)_{F_{h(\mathcal{B})}^{\bullet}(M)}[s]). \qedhere\]
\end{proof}

\begin{corollary} \label{corollary:h_sbij}
Let $s$ be a positive integer. We recall that $\Gamma_{f,s} : F_s(\mathcal{M}) \to F_s(\mathcal{M}(g))$ is the function defined by an isomorphism $f$ of $F$-truncations modulo $p^s$ from $F_s(\mathcal{M})$ to $F_s(\mathcal{M}(g))$ (see the paragraph after Definition \ref{definition:ftruncation} for its definition). Then the function $\Gamma_{f,s}$ is a bijection.
\end{corollary}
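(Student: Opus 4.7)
The plan is to exhibit a two-sided inverse to $\Gamma_{f,s}$. Concretely, I will show that the $W_s(k)$-linear inverse $f^{-1}\colon M/p^sM \to M/p^sM$ is itself an isomorphism of $F$-truncations modulo $p^s$, but now in the direction $F_s(\mathcal{M}(g)) \to F_s(\mathcal{M})$. Once this is in hand, an unwinding of Definition \ref{definition:ftruncation} shows that $\Gamma_{f,s}$ and $\Gamma_{f^{-1},s}$ are inverse set maps: the very pair of $F$-bases $(\mathcal{B}_1,\mathcal{B}_2)$ that witnesses $\Gamma_{f,s}([\mathcal{B}_1])=[\mathcal{B}_2]$ also witnesses $\Gamma_{f^{-1},s}([\mathcal{B}_2])=[\mathcal{B}_1]$, because reversing an isomorphism of filtered $F$-crystals modulo $p^s$ is again such an isomorphism.

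To verify that $f^{-1}$ is an isomorphism of $F$-truncations in the reverse direction, I first lift $f$ to some $h \in \mathrm{GL}_M(W(k))$ via the surjection $\mathrm{GL}_M(W(k)) \twoheadrightarrow \mathrm{GL}_M(W_s(k))$, so that $f^{-1}$ is the reduction of $h^{-1}$ modulo $p^s$. By Proposition \ref{proposition:homftruncation} the hypothesis on $f$ is equivalent to
\[h^{-1}g\varphi h\varphi^{-1} \equiv 1_M \pmod{p^s},\]
while the criterion needed for $h^{-1}$ to induce an isomorphism $F_s(\mathcal{M}(g)) \to F_s(\mathcal{M}(g)(g^{-1})) = F_s(\mathcal{M})$ comes from the same Proposition applied with the substitutions $(\mathcal{M},\varphi,g,h) \leftrightarrow (\mathcal{M}(g), g\varphi, g^{-1}, h^{-1})$, and simplifies to
\[h\varphi h^{-1}\varphi^{-1} \equiv g \pmod{p^s}.\]
Setting $A := h^{-1}g\varphi h\varphi^{-1} = 1_M + p^s A'$ with $A' \in \mathrm{End}(M)$, a short algebraic rearrangement yields the formal identity $g - h\varphi h^{-1}\varphi^{-1} = p^s hA'\varphi h^{-1}\varphi^{-1}$ in $\mathrm{End}(M[1/p])$, which is the passage between the two congruences.

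The main obstacle is the integrality check: one must confirm that $h\varphi h^{-1}\varphi^{-1}$ belongs to $\mathrm{GL}_M(W(k))$ (and not merely to $\mathrm{GL}_M(B(k))$), and that the right-hand side of the displayed identity above actually lies in $p^s\mathrm{End}(M)$, so that the target congruence modulo $p^s$ is a genuine integral statement. This is handled by exactly the same technique used in the proof of Lemma \ref{lemma:F-truncations same isonumber}(1)$\Rightarrow$(2): membership of $h\varphi h^{-1}\varphi^{-1}$ in $\mathrm{GL}_M(W(k))$ is equivalent to the inclusion $h^{-1}(\varphi^{-1}(M)) \subset \varphi^{-1}(M)$, which follows from the companion inclusion $h(\varphi^{-1}(M)) \subset \varphi^{-1}(M)$ already supplied by the hypothesis together with the observation that $\varphi^{-1}(M)$ is a free $W(k)$-module of the same rank as $M$ on which $h$ acts with unit determinant, hence as a $W(k)$-automorphism. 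The intermediate integrality of $\varphi h^{-1}\varphi^{-1}$ produced along the way then ensures that $hA'\varphi h^{-1}\varphi^{-1}$ lies in $\mathrm{End}(M)$, completing the verification and hence the bijectivity of $\Gamma_{f,s}$.
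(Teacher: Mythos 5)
Your proof is correct and follows the same strategy as the paper: lift $f$ to $h\in\mathrm{GL}_M(W(k))$, invoke Proposition \ref{proposition:homftruncation} for both $f$ and the candidate inverse $f^{-1}$, and show that the hypothesis $h^{-1}g\varphi h\varphi^{-1}\equiv 1_M\pmod{p^s}$ forces $h\varphi h^{-1}(g\varphi)^{-1}\equiv 1_M\pmod{p^s}$. The paper's manipulation of the congruence is simpler, however: it just takes the inverse of the given congruence and then conjugates by $h$, which keeps every intermediate expression inside $\mathrm{GL}_M(W(k))$ automatically and so avoids the separate integrality verification you carry out via the lattice $\varphi^{-1}(M)$; in your notation, once $h^{-1}g\varphi h\varphi^{-1}\in\mathrm{GL}_M(W(k))$, the factor $\varphi h^{-1}\varphi^{-1}=(g^{-1}h\cdot h^{-1}g\varphi h\varphi^{-1})^{-1}\cdot h$ is immediately seen to lie in $\mathrm{GL}_M(W(k))$ with no determinant argument needed.
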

\begin{proof}
Let $h \in \mathrm{GL}_M(W(k))$ be a preimage of $f \in \mathrm{GL}_M(W_s(k))$ via the canonical surjection $\mathrm{GL}_M(W(k)) \to \mathrm{GL}_M(W_s(k))$. By Proposition \ref{proposition:homftruncation}, we have $h^{-1}g\varphi h \varphi^{-1} \equiv 1_M$ modulo $p^s$. Taking inverses on both hand sides, we have $\varphi h^{-1} \varphi^{-1} g^{-1} h \equiv1_M$ modulo $p^s$. After multiplying $h$ on the left and $h^{-1}$ on the right on both hand sides, we get $h\varphi h^{-1} \varphi^{-1} g^{-1} \equiv 1$ modulo $p^s$, that is, $h\varphi h^{-1} (g\varphi)^{-1} \equiv 1_M$ modulo $p^s$. Hence $h^{-1}$ defines an isomorphism of $F$-truncations modulo $p^s$ from $F_s(\mathcal{M}(g))$ to $F_s(\mathcal{M})$. This implies that $\Gamma_{f,s}$ is a bijection.
\end{proof}

The next corollary justifies that the isomorphism number of $F$-crystals is the right generalization of the isomorphism number of $p$-divisible groups.

\begin{corollary} \label{corollary:correctgeneralization}
Let $t_{\mathcal{M}}$ be the smallest integer such that for all $g \in \mathrm{GL}_M(W(k))$, if $F_{t_{\mathcal{M}}}(\mathcal{M})$ is isomorphic to $F_{t_{\mathcal{M}}}(\mathcal{M}(g))$, then $\mathcal{M}$ is isomorphic to $\mathcal{M}(g)$. We have $t_{\mathcal{M}} = n_{\mathcal{M}}$.
\end{corollary}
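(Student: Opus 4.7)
The plan is to prove $t_{\mathcal{M}} = n_{\mathcal{M}}$ by establishing the two inequalities separately, with each direction being a short and essentially mechanical consequence of Proposition \ref{proposition:homftruncation} (together with Lemma \ref{lemma:F-truncations same isonumber} on which it rests). In each direction I would verify that one of the two invariants satisfies the defining property of the other.

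For $t_{\mathcal{M}} \leq n_{\mathcal{M}}$, I would show that $n_{\mathcal{M}}$ already satisfies the defining property of $t_{\mathcal{M}}$. Suppose $g \in \mathrm{GL}_M(W(k))$ is such that there is an isomorphism $f : F_{n_{\mathcal{M}}}(\mathcal{M}) \to F_{n_{\mathcal{M}}}(\mathcal{M}(g))$. Lift $f$ through the canonical surjection $\mathrm{GL}_M(W(k)) \twoheadrightarrow \mathrm{GL}_M(W_{n_{\mathcal{M}}}(k))$ to some $h \in \mathrm{GL}_M(W(k))$. By Proposition \ref{proposition:homftruncation}, the element $g' := h^{-1} g \varphi h \varphi^{-1}$ lies in $\mathrm{GL}_M(W(k))$ and satisfies $g' \equiv 1_M$ modulo $p^{n_{\mathcal{M}}}$. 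The definition of $n_{\mathcal{M}}$ then gives $\mathcal{M}(g') \cong \mathcal{M}$, while $h^{-1}$ manifestly realises an isomorphism $\mathcal{M}(g) = (M, g\varphi) \to (M, h^{-1} g \varphi h) = \mathcal{M}(g')$. Composing these yields $\mathcal{M} \cong \mathcal{M}(g)$, as required.

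For the reverse inequality $n_{\mathcal{M}} \leq t_{\mathcal{M}}$, I would show that $t_{\mathcal{M}}$ satisfies the defining property of $n_{\mathcal{M}}$. Given $g \in \mathrm{GL}_M(W(k))$ with $g \equiv 1_M$ modulo $p^{t_{\mathcal{M}}}$, the choice $h = 1_M$ collapses the criterion of Proposition \ref{proposition:homftruncation} to $g \equiv 1_M$ modulo $p^{t_{\mathcal{M}}}$, which holds by hypothesis. Hence the reduction of $1_M$ modulo $p^{t_{\mathcal{M}}}$ is an isomorphism $F_{t_{\mathcal{M}}}(\mathcal{M}) \to F_{t_{\mathcal{M}}}(\mathcal{M}(g))$, and the defining property of $t_{\mathcal{M}}$ immediately delivers $\mathcal{M} \cong \mathcal{M}(g)$.

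There is no real obstacle here: once Proposition \ref{proposition:homftruncation} is in hand, the corollary is just the translation of the single congruence $h^{-1}g\varphi h \varphi^{-1} \equiv 1_M \pmod{p^s}$ between its two interpretations — the existence of an isomorphism of $F$-truncations modulo $p^s$, and the existence of an element congruent to $1_M$ modulo $p^s$ in the isomorphism class of $g\varphi$. All of the real content was absorbed by Lemma \ref{lemma:F-truncations same isonumber}; the present corollary is essentially a tautological unpacking that justifies why $F$-truncations modulo $p^s$ correctly generalise truncated Barsotti--Tate groups of level $s$ for the purposes of isomorphism numbers.
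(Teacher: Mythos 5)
Your proof is correct and follows essentially the same route as the paper: both directions rest on the equivalence encoded in Proposition \ref{proposition:homftruncation} (equivalently Lemma \ref{lemma:F-truncations same isonumber}), with the paper citing the lemma directly in the first direction where you unfold it explicitly via the element $g' = h^{-1}g\varphi h\varphi^{-1}$, and both using the choice $h = 1_M$ in the second.
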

\begin{proof}
If $F_{n_{\mathcal{M}}}(\mathcal{M})$ is isomorphic to $F_{n_{\mathcal{M}}}(\mathcal{M}(g))$, then by Lemma \ref{lemma:F-truncations same isonumber}, there exists $g_{n_{\mathcal{M}}} \in \mathrm{GL}_M(W(k))$ with the property that $g_{n_{\mathcal{M}}} \equiv 1_M$ modulo $p^{n_{\mathcal{M}}}$ such that $\mathcal{M}(g_{n_{\mathcal{M}}})$, which is isomorphic to $\mathcal{M}$ by the definition of isomorphism numbers, is isomorphic to $\mathcal{M}(g)$. Thus $t_{\mathcal{M}} \leq n_{\mathcal{M}}$.

Let $g_{t_{\mathcal{M}}} \equiv 1_M$ modulo $p^{t_{\mathcal{M}}}$. By Proposition \ref{proposition:homftruncation}, $1_M[t_{\mathcal{M}}] \in \mathrm{GL}_M(W_{t_{\mathcal{M}}}(k))$ is an isomorphism from $F_{t_{\mathcal{M}}}(\mathcal{M})$ to $F_{t_{\mathcal{M}}}(\mathcal{M}({g_{t_{\mathcal{M}}}}))$. By definition of $t_{\mathcal{M}}$, $\mathcal{M}$ is isomorphic to $\mathcal{M}(g_{t_{\mathcal{M}}})$. Thus $n_{\mathcal{M}} \leq t_{\mathcal{M}}$.
\end{proof}

Proposition \ref{proposition:homftruncation} motivates the following definition of a homomorphism modulo $p^s$ between two $F$-crystals.

\begin{definition} \label{definition:homftruncation}
A $W_s(k)$-linear map $h[s]: M_1/p^sM_1 \to M_2/p^sM_2$ is a homomorphism from $F_s(\mathcal{M}_1)$ to $F_s(\mathcal{M}_2)$ if a preimage $h \in \mathrm{Hom}_{W(k)}(M_1, M_2)$ of $h[s]$ under the canonical surjection $\mathrm{Hom}_{W(k)}(M_1, M_2) \to \mathrm{Hom}_{W_s(k)}(M_1/p^sM_1, M_2/p^sM_2)$ satisfies $\varphi_2 h \varphi_1^{-1} \equiv h$ modulo $p^s$. We call $h$ a lift of $h[s]$ and $h[s]$ a homomorphism modulo $p^s$ from $\mathcal{M}_1$ to $\mathcal{M}_2$.
\end{definition}

\begin{remark}
A homomorphism $h[s]$ modulo $p^s$ between $\mathcal{M}_1$ and $\mathcal{M}_2$ implicitly implies that there exists a lift $h$ of $h[s]$ in $\mathrm{Hom}_{W(k)}(M_1, M_2)$ such that $\varphi_2 h \varphi^{-1}_1$ is also an element in $\mathrm{Hom}_{W(k)}(M_1, M_2)$. Note that $h[s]$ is \emph{not} just a $W_s(k)$-linear homomorphism $h[s] : M_1/p^sM_1 \to M_2/p^sM_2$ such that $h\varphi_1 \equiv \varphi_2 h$ modulo $p^s$, although this is a consequence of the definition but it is not equivalent to the definition.
\end{remark}

\begin{remark}
Note that the definition of an isomorphism between two filtered $F$-crystals modulo $p^s$ requires that the two $F$-crystals have the same Hodge polygon described in Subsection \ref{subsection:filtrationfcry}. In Proposition \ref{proposition:homftruncation} we also require that the two $F$-crystals have the same Hodge polygon. On the other hand, in Definition \ref{definition:homftruncation}, we do not require that the two $F$-crystals have the same Hodge polygon. It is reasonable to ask if $h[s] \in \mathrm{GL}_M(W_s(k))$ and there exists a lift $h \in \mathrm{GL}_M(W(k))$ of $h[s]$ such that $\varphi_2 h \varphi_1^{-1} \equiv h$ modulo $p^s$, do $(M, \varphi_1)$ and $(M, \varphi_2)$ have the same Hodge polygon so that $h[s]$ induces an isomorphism between $F_s(\mathcal{M}_1)$ and $F_s(\mathcal{M}_2)$? The answer is yes because if $\varphi_2 h \varphi_1^{-1} \equiv h$ modulo $p^s$, then we know that $\varphi_2 h \varphi_1^{-1} \in \mathrm{GL}_M(W(k))$. Thus $\varphi_2 h \varphi_1^{-1}(\varphi_1(M)) = \varphi_2 h(M) = \varphi_2(M)$. As a result, $\varphi_2 h \varphi_1^{-1}$ induces an isomorphism from $M/\varphi_1(M)$ to $M/\varphi_2(M)$ and thus $(M, \varphi_1)$ and $(M, \varphi_2)$ have the same Hodge polygon. Therefore if $\mathcal{M}_1$ and $\mathcal{M}_2$ have different Hodge polygons, then $F_s(\mathcal{M}_1)$ and $F_s(\mathcal{M}_2)$ are not isomorphic modulo $p^s$.
\end{remark}

\begin{proposition} \label{proposition:anotherdefinitionofftruncation}
Let $s \geq 1$ be an integer. A homomorphism $h[s] : M_1/p^sM_1 \to M_2/p^sM_2$ is a homomorphism from $F_s(\mathcal{M}_1)$ to $F_s(\mathcal{M}_2)$ if and only if there exists a lift $h$ of $h[s]$ in $\mathrm{Hom}_{W(k)}(M_1, M_2)$ such that for every $x \in M_1\; \backslash\; pM_1$, if $\varphi_1(x) \in p^iM_1\; \backslash\; p^{i+1}M_1$, then $h\varphi_1(x) \equiv \varphi_2h(x)$ modulo $p^{s+i}$. Moreover, if we fix an $F$-basis $\mathcal{B}_1 = \{v_1, v_2, \dots, v_r\}$ of $\mathcal{M}_1$, then the condition ``for every $x\in M_1 \; \backslash \; pM_1$" in the prior sentence can be strengthen to ``for all $x \in \mathcal{B}_1$".
\end{proposition}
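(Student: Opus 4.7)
The plan is to fix an $F$-basis $\mathcal{B}_1 = \{v_1, \ldots, v_r\}$ of $\mathcal{M}_1$ and reduce both directions of the equivalence to a single congruence on basis vectors; this will simultaneously establish the main statement and the ``moreover'' clause. Set $u_j := p^{-e_j} \varphi_1(v_j)$, so that $\{u_1, \ldots, u_r\}$ is again a $W(k)$-basis of $M_1$, and $\varphi_1^{-1}(u_j) = p^{-e_j} v_j$ inside $M_1[1/p]$.

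First I would unpack Definition~\ref{definition:homftruncation} on the basis $\{u_j\}$. The requirement that $\varphi_2 h \varphi_1^{-1}$ lie in $\mathrm{Hom}_{W(k)}(M_1, M_2)$ and be congruent to $h$ modulo $p^s$ translates to $\varphi_2 h(v_j) \in p^{e_j} M_2$ together with $p^{-e_j} \varphi_2 h(v_j) \equiv h(u_j) \pmod{p^s}$. Multiplying by $p^{e_j}$ and using $p^{e_j} h(u_j) = h \varphi_1(v_j)$, this collapses to the single congruence
\[\varphi_2 h(v_j) \equiv h \varphi_1(v_j) \pmod{p^{s+e_j}}\]
for each $j$. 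Conversely this congruence already forces $\varphi_2 h(v_j) \in p^{e_j} M_2$ because $h \varphi_1(v_j) \in p^{e_j} M_2$, so the integrality condition on $\varphi_2 h \varphi_1^{-1}$ is automatic. Since $u_j$ is part of a $W(k)$-basis of $M_1$, one has $\varphi_1(v_j) \in p^{e_j} M_1 \setminus p^{e_j+1} M_1$, and the displayed congruence is precisely the condition in the proposition for $x = v_j \in \mathcal{B}_1$.

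Next I would promote the basis-level condition to the condition for all $x \in M_1 \setminus pM_1$. The direction from ``all $x$'' to ``basis elements'' is immediate. For the reverse, write $x = \sum_j a_j v_j$ with $a_j \in W(k)$. By $\sigma$-linearity,
\[\varphi_1(x) = \sum_{j} \sigma(a_j)\, p^{e_j} u_j,\]
so the integer $i$ defined by $\varphi_1(x) \in p^i M_1 \setminus p^{i+1} M_1$ equals $\min_j \bigl(v_p(a_j) + e_j\bigr)$. A direct $\sigma$-linearity expansion then yields
\[h \varphi_1(x) - \varphi_2 h(x) = \sum_{j} \sigma(a_j)\bigl(h \varphi_1(v_j) - \varphi_2 h(v_j)\bigr),\]
and by the basis-level congruence each summand lies in $p^{v_p(a_j) + s + e_j} M_2 \subseteq p^{s+i} M_2$, giving the required congruence.

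I do not anticipate any serious obstacle: the argument is a direct unpacking of the definition against a convenient basis, combined with $\sigma$-linearity and the standard behavior of $p$-adic valuations on Witt vectors. The only subtle point is ensuring that the implicit integrality requirement in Definition~\ref{definition:homftruncation}, namely $\varphi_2 h \varphi_1^{-1} \in \mathrm{Hom}_{W(k)}(M_1, M_2)$, follows automatically from the basis-level congruence; as noted above, this holds because $h \varphi_1(v_j)$ is already $p^{e_j}$-divisible.
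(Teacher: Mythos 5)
Your argument is correct, and it reorganizes the paper's proof in a modestly different (and arguably cleaner) way. The paper proves the equivalence ``Definition~\ref{definition:homftruncation} $\Leftrightarrow$ for-all-$x$ condition'' by two substitutions: plugging $p^{-i}\varphi_1(x)$ into $\varphi_2 h \varphi_1^{-1} \equiv h \pmod{p^s}$ for one direction, and for the other decomposing $\varphi_1^{-1}(x) = p^j x'$ with $x' \in M_1 \setminus pM_1$ and applying the hypothesis to $x'$; the ``moreover'' clause is then handled separately by the $\sigma$-linear expansion. You instead take the basis condition as the pivot: you unpack the defining congruence directly on the $F$-basis $\{u_j = p^{-e_j}\varphi_1(v_j)\}$ to get Definition~\ref{definition:homftruncation} $\Leftrightarrow$ basis condition, then run the same $\sigma$-linear expansion for basis $\Rightarrow$ all-$x$, and note the converse is a restriction. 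The two routes share the key expansion step, but yours avoids the slightly more delicate $\varphi_1^{-1}(x) = p^j x'$ decomposition in the paper's reverse direction, and it makes the implicit integrality requirement $\varphi_2 h \varphi_1^{-1} \in \mathrm{Hom}_{W(k)}(M_1, M_2)$ fall out transparently from $h\varphi_1(v_j) \in p^{e_j}M_2$. Both proofs are valid; yours is somewhat more streamlined.
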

\begin{proof}
Let $h[s]$ be a homomorphism from $F_s(\mathcal{M}_1)$ to $F_s(\mathcal{M}_2)$, then there exists $h \in \mathrm{Hom}_{W(k)}(M_1, M_2)$ such that $\varphi_2h\varphi_1^{-1} \equiv h$ modulo $p^s$. Let $x \in M_1 \; \backslash \; pM_1$ be such that $\varphi_1(x) \in p^iM_1 \; \backslash \; p^{i+1}M_1$, whence $\frac{1}{p^i}\varphi_1(x) \in M_1 \; \backslash \; pM_1$. Plugging $\frac{1}{p^i}\varphi_1(x)$ into $\varphi_2 h \varphi_1^{-1} \equiv h$ modulo $p^s$ gives the desired congruence $h\varphi_1(x) = \varphi_2h(x)$ modulo $p^{s+i}$.

Suppose $h[s] \in \mathrm{Hom}(M_1/p^sM_1, M_2/p^sM_2)$ satisfies that  for every $x \in M_1\; \backslash\; pM_1$, if $\varphi_1(x) \in p^iM_1\; \backslash\; p^{i+1}M_1$, then $h\varphi_1(x) \equiv \varphi_2h(x)$ modulo $p^{s+i}$. For every $x \in M_1 \; \backslash \; \{0\}$, there exists $l \geq 0$ such that $x \in p^lM_1 \; \backslash \; p^{l+1}M_1$. We write $\varphi_1^{-1}(x) = p^jx'$ for some $j \in \mathbb{Z}$ and $x' \in M_1 \; \backslash \; pM_1$. Therefore $\varphi_1(x') \in p^{l-j}M_1 \backslash p^{l-j+1}M_1$. Plugging $x' = p^{-j}\varphi_1^{-1}(x)$ into the congruence $\varphi_2 h \equiv h \varphi_1$ modulo $p^{s+l-j}$, we get $\varphi_2 h \varphi_1^{-1}(x) \equiv h (x)$ modulo $p^s$ as $l \geq 0$.

To prove the strengthening part, for all $x \in M_1 \; \backslash \; pM_1$, $x = \sum_{i=1}^r x_iv_i$ for some $x_i \in W(k)$, we have $\varphi_1(x) = \sum_{i=1}^r p^{e_i} \sigma(x_i)w_i$ for some $F$-basis $\{w_1, w_2, \dots, w_r\}$ of $\mathcal{M}_2$. Let $i = \mathrm{min}\{e_i+\mathrm{ord}_p(x_i) \; | \; 1 \leq i \leq r\}$. Then $\varphi_1(x) \in p^iM_1 \; \backslash \; p^{i+1}M_1$. Suppose for every $1 \leq i \leq r$, $h\varphi_1(v_i) - \varphi_1h(v_i) = p^{s+e_i}v_i'$ for some $v_i' \in  M_2 \; \backslash \; pM_2$, we conclude the proof by considering the difference
\begin{align*}
h\varphi_1(x) - \varphi_1h(x) &= \sum_{i=1}^r\sigma(x_i)h(\varphi_1(v_i)) - \sum_{i=1}^r \sigma(x_i)\varphi_1(h(v_i)) \\
&= \sum_{i=1}^r p^{s+e_i}\sigma(x_i)v_i' \in p^{s+i} M_2. \qedhere
\end{align*}
\end{proof}

\begin{corollary} \label{corollary:anotherdefinitionofftruncation}
Let $\mathcal{M}$ be an $F$-crystal over $k$ and let $\mathcal{B} = \{v_1, v_2, \dots, v_r\}$ be an $F$-basis of $\mathcal{M}$. For all $g,h \in \mathrm{GL}_M(W(k))$, the reduction of $h$ modulo $p^s$ is an isomorphism between $F_s(\mathcal{M})$ and $F_s(\mathcal{M}(g))$ if and only if for all $v_i \in \mathcal{B}$ we have $h\varphi_1(v_i) \equiv \varphi_2h(v_i)$ modulo $p^{s+e_i}$ where $e_1 \leq e_2 \leq \cdots \leq e_r$ are the Hodge slopes of $\mathcal{M}$.
\end{corollary}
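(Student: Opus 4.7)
The plan is to obtain this corollary as a direct consequence of Proposition \ref{proposition:homftruncation} combined with the strengthened form of Proposition \ref{proposition:anotherdefinitionofftruncation}, applied in the special case $\mathcal{M}_1 = \mathcal{M}$, $\mathcal{M}_2 = \mathcal{M}(g)$, so that $\varphi_1 = \varphi$ and $\varphi_2 = g\varphi$.

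First, I would translate the isomorphism hypothesis into the congruence language of Definition \ref{definition:homftruncation}. By Proposition \ref{proposition:homftruncation}, the reduction of $h$ modulo $p^s$ is an isomorphism from $F_s(\mathcal{M})$ to $F_s(\mathcal{M}(g))$ if and only if $h^{-1} g\varphi h \varphi^{-1} \equiv 1_M$ modulo $p^s$, equivalently $g\varphi h \varphi^{-1} \equiv h$ modulo $p^s$. This is precisely the condition that $h[s]$ is a homomorphism from $F_s(\mathcal{M})$ to $F_s(\mathcal{M}(g))$ with $h$ as a chosen lift (and it is an isomorphism because $h \in \mathrm{GL}_M(W(k))$).

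Next, I would invoke the strengthened assertion of Proposition \ref{proposition:anotherdefinitionofftruncation}, applied to the fixed $F$-basis $\mathcal{B} = \{v_1, \dots, v_r\}$. It converts the congruence $g\varphi h \varphi^{-1} \equiv h \pmod{p^s}$ into the family of congruences: for every $v_i \in \mathcal{B}$, if $\varphi(v_i) \in p^{j_i} M \setminus p^{j_i+1} M$, then $h\varphi(v_i) \equiv g\varphi h(v_i)$ modulo $p^{s+j_i}$. It remains to identify $j_i$ with the Hodge slope $e_i$. This is immediate from the definition of an $F$-basis: the elements $p^{-e_1}\varphi(v_1), \dots, p^{-e_r}\varphi(v_r)$ form a $W(k)$-basis of $M$, so in particular $p^{-e_i}\varphi(v_i) \notin pM$, whence $\varphi(v_i) \in p^{e_i}M \setminus p^{e_i+1}M$ and $j_i = e_i$. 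Plugging this in yields exactly the congruence $h\varphi(v_i) \equiv g\varphi h(v_i) \pmod{p^{s+e_i}}$ stated in the corollary.

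There is essentially no obstacle here, since the substantive work is already packaged in Proposition \ref{proposition:anotherdefinitionofftruncation}; the corollary is a clean specialization. The only point requiring care is the bookkeeping identification $\mathrm{ord}_p(\varphi(v_i)) = e_i$ for $F$-basis vectors, which follows directly from the definition of an $F$-basis and ensures that the general exponent $s+j$ in the strengthened Proposition \ref{proposition:anotherdefinitionofftruncation} specializes to the desired $s+e_i$.
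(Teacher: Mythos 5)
Your proposal is correct and is exactly the intended derivation: the paper states this corollary with no separate proof because it is meant to follow immediately by specializing $\mathcal{M}_1 = \mathcal{M}$, $\mathcal{M}_2 = \mathcal{M}(g)$ in the strengthened form of Proposition~\ref{proposition:anotherdefinitionofftruncation} and then feeding the resulting condition $g\varphi h\varphi^{-1} \equiv h \pmod{p^s}$ into Proposition~\ref{proposition:homftruncation}. Your identification of $j_i = e_i$ from the definition of an $F$-basis is the one piece of bookkeeping that has to be done and you carry it out correctly. The only caveat worth noting is that the statement of Proposition~\ref{proposition:anotherdefinitionofftruncation} is phrased in terms of "there exists a lift $h$," whereas here you need the equivalence for the specific $h \in \mathrm{GL}_M(W(k))$ that you have in hand; but inspecting the proof of that proposition shows both directions are established for a fixed lift, so the equivalence does hold for your chosen $h$, and the argument is sound.
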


We denote by $\mathrm{Hom}_s(\mathcal{M}_1, \mathcal{M}_2)$ the (additive) group of all homomorphisms modulo $p^s$ from $\mathcal{M}_1$ to $\mathcal{M}_2$, that is, all homomorphisms from $F_s(\mathcal{M}_1)$ to $F_s(\mathcal{M}_2)$. For $i = 1, 2$, if $h_i[s] \in \mathrm{GL}_M(W_s(k))$ is an automorphism of $F_s(\mathcal{M})$, and $h_i \in \mathrm{GL}_M(W(k))$ is a lift of $h_i[s]$ such that $\varphi h_i \varphi^{-1} \equiv h_i$ modulo $p^s$, then $(h_1h_2)[s]$ is also an automorphism of $F_s(\mathcal{M})$ as $h_1h_2 \in \mathrm{GL}_M(W(k))$ is a lift of $(h_1h_2)[s]$ that satisfies
\[(h_1h_2)^{-1} \varphi (h_1h_2) \varphi^{-1} \equiv h_2^{-1} (h_1^{-1} \varphi h_1 \varphi^{-1}) \varphi h_2 \varphi^{-1} \equiv h_2^{-1} \varphi h_2 \varphi^{-1} \equiv 1 \; \mathrm{modulo} \; p^s.\]
Thus all automorphisms of $F_s(\mathcal{M})$ form an abstract group $\mathrm{Aut}_s(\mathcal{M})$ under composition.

\subsection{$\mathbb{W}_s$ functor} \label{section:wsfunctor} For every affine scheme ${\bf X}$ over $\mathrm{Spec}\,W(k)$, there is a functor $\mathbb{W}_s({\bf X})$ from the category of affine schemes over $k$ to the category of sets defined as follows: For every affine scheme $\mathrm{Spec}\,R$,
\[\mathbb{W}_s({\bf X})(\mathrm{Spec}\,R) := {\bf X}(W_s(R)).\]
If ${\bf X}$ is of finite type over $W(k)$, it is known that this functor is representable by an affine $k$-scheme of finite type (see \cite[p. 639 Corollary 1]{Greenberg:Schemata}), which will be denoted by $\mathbb{W}_s({\bf X})$. If in addition ${\bf X}$ is smooth over $\mathrm{Spec}\,W(k)$, then $\mathbb{W}_s({\bf X})$ is smooth. Indeed, for every $k$-algebra $R$ and an ideal $I$ of $R$ such that $I^2=0$, the kernel of $W_s(R) \to W_s(R/I)$ is of square zero. As ${\bf X}$ is smooth, we get that 
\[\mathbb{W}_s({\bf X})(R) = {\bf X}(W_s(R)) \to {\bf X}(W_s(R/I)) = \mathbb{W}_s({\bf X})(R/I)\]
is surjective by \cite[Ch. 2, Sec. 2, Prop. 6]{Neronmodels}, whence $\mathbb{W}_s({\bf X})$ is smooth by the loc. cit. Suppose $\mathbf{X}$ is a smooth affine group scheme over $\mathrm{Spec}\,W(k)$, then $\mathbb{W}_s(\mathbf{X})$ is a smooth affine group scheme over $k$. The reduction epimorphism $W_{s+1}(R) \to W_s(R)$ naturally induces a smooth epimorphism of affine group schemes over $k$
\[\mathrm{Red}_{s+1,{\bf X}}: \mathbb{W}_{m+1}({\bf X}) \to \mathbb{W}_m({\bf X}).\]
The kernel of $\mathrm{Red}_{s+1,{\bf X}}$ is a unipotent commutative group isomorphic to $\mathbb{G}_a^{\mathrm{dim}({\bf X}_k)}$. Identifying $\mathbb{W}_1({\bf X}) = {\bf X}_k$, an inductive argument shows that $\mathrm{dim}(\mathbb{W}_s({\bf X})) = s \cdot \mathrm{dim}({\bf X}_k)$ and $\mathbb{W}_s({\bf X})$ is connected if and only if ${\bf X}_k$ is connected.

\subsection{Group schemes pertaining to $F$-truncations modulo $p^s$} \label{subsection:groupscheme}
 In this subsection, we construct a smooth (additive) group scheme $\mathbf{Hom}_s(\mathcal{M}_1, \mathcal{M}_2)$ of finite type over $k$ such that its group of $k$-valued points is $\mathrm{Hom}_s(\mathcal{M}_1, \mathcal{M}_2)$, and a smooth (multiplicative) group scheme $\mathbf{Aut}_s(\mathcal{M})$ of finite type over $k$ such that its group of $k$-valued points is $\mathrm{Aut}_s(\mathcal{M})$.

Fix $s \geq 1$. Let $\mathcal{M}_1$ and $\mathcal{M}_2$ be two $F$-crystals over $k$. Let $r_1$ and $r_2$ be the ranks of $M_1$ and $M_2$ respectively. We fix $W(k)$-bases $\mathcal{B}_1$ of $M_1$ and $\mathcal{B}_2$ of $M_2$ (they are not necessarily $F$-bases.) Thus a $W(k)$-linear homomorphism $h: M_1 \to M_2$ corresponds to an $r_2 \times r_1$ matrix $X =  [h]_{\mathcal{B}_1}^{\mathcal{B}_2} = (x_{ij})_{1 \leq i \leq r_2, 1 \leq j \leq r_1}$ with respect to $\mathcal{B}_1$ and $\mathcal{B}_2$. Here and in all that follows we adopt the following convention: for any $v \in M_1$, $[h(v)]_{\mathcal{B}_2} = X[v]_{\mathcal{B}_1}$. The Frobenius of $\mathcal{M}_1$ corresponds to an $r_1 \times r_1$ matrix $U= [\varphi_1]_{\mathcal{B}_1}^{\mathcal{B}_1} = (u_{ij})_{1 \leq i, j \leq r_1}$ with respect to $\mathcal{B}_1$, and the Frobenius of $\mathcal{M}_2$ corresponds to an $r_2 \times r_2$ matrix $V=[\varphi_2]_{\mathcal{B}_2}^{\mathcal{B}_2} = (v_{ij})_{1 \leq i, j \leq r_2}$ with respect to $\mathcal{B}_2$. 

Let $W = (w_{ij})_{1 \leq i, j \leq r_1}$ be the transpose of the cofactor matrix of $U$. We have $w_{ij} \in W(k)$. The matrix representation of $\varphi_2 h \varphi_1^{-1}$ with respect to $\mathcal{B}_1$ and $\mathcal{B}_2$ is $V \sigma(X) \sigma(W/\mathrm{det}(U))$. We would like to find conditions on $X$ so that the reduction of $h$ modulo $p^s$, denoted by $h[s]$, is a homomorphism from $F_s(\mathcal{M}_1)$ to $F_s(\mathcal{M}_2)$. By definition, the condition $\varphi_2h\varphi_1^{-1} \equiv h$ modulo $p^s$ is equivalent to the system of equations
\begin{equation} \label{equation:solvehom}
\frac{1}{\sigma(\mathrm{det}(U))}\sum_{m=1}^{r_1}\sum_{n=1}^{r_2} v_{in} \sigma(x_{nm})\sigma(w_{mj}) \equiv x_{ij} \quad \mathrm{modulo} \; \; p^s,
\end{equation}
for all $1 \leq i \leq r_1$ and $1 \leq j \leq r_2$. Let $l := \mathrm{ord}_p(\mathrm{det}(U))$, and $\mathrm{det}(U)^{-1} = p^{-l}d$ where $d \in W(k) \; \backslash \; pW(k)$. Then the system of equations \eqref{equation:solvehom} is equivalent to 
\begin{equation} \label{equation:solvehom2}
\sum_{m=1}^{r_1}\sum_{n=1}^{r_2} \sigma(d)v_{in} \sigma(x_{nm})\sigma(w_{mj}) \equiv p^l x_{ij} \equiv \theta^l (\sigma^l (x_{ij})) \quad \mathrm{modulo} \; p^{s+l}.
\end{equation}

If $R$ is a perfect ring, two elements $u = (u^{(0)}, u^{(1)}, \dots)$ and $w = (w^{(0)}, w^{(1)}, \dots)$ of $W(R)$ are congruent modulo $p^s$ if and only if $u^{(i)} \equiv w^{(i)}$ for all $0 \leq i \leq s-1$. This is true because $p^s = (\sigma_R \theta_R)^s = \sigma^s_R \theta^s_R$, and $\sigma_R$ is an automorphism of $W(R)$ when $R$ is perfect. Thus over perfect rings, the system of equations \eqref{equation:solvehom2} is equivalent to
\begin{equation} \label{equation:solvehom3}
\sum_{m=1}^{r_1}\sum_{n=1}^{r_2} \sigma(d)v_{in} \sigma(x_{nm})\sigma(w_{mj}) \equiv \theta^l (\sigma^l (x_{ij})) \quad \mathrm{modulo} \; \; \theta^{s+l}(W(R)).
\end{equation}

Let $x_{nm} = (x_{nm}^{(0)}, x_{nm}^{(1)}, \dots)$ and $P_{r,q}$ the polynomial with integral coefficients that computes the $q$-th coordinate of the $p$-typical Witt vector which is a product of $r$ $p$-typical Witt vectors. Then the system of equations \eqref{equation:solvehom3} is equivalent to
\begin{equation} \label{equation:solvehomwitt1}
\sum_{m=1}^{r_1} \sum_{n=1}^{r_2} P_{4, q+l}(\sigma(d), v_{in}, \sigma(x_{nm}), \sigma(w_{mj})) - (x_{ij}^{(q)})^{p^l} = 0
\end{equation}
for all $1 \leq i \leq r_1$, $1 \leq j \leq r_2$, and $0 \leq q \leq s-1$, and the equations
\begin{equation} \label{equation:solvehomwitt2}
\sum_{m=1}^{r_1} \sum_{n=1}^{r_2} P_{4, q}(\sigma(d), v_{in}, \sigma(x_{nm}), \sigma(w_{mj}))=0
\end{equation}
for all $1 \leq i \leq r_1$, $1 \leq j \leq r_2$, and $0 \leq q \leq l-1$. 

For any three non-negative integers $n_1, n_2$ and $n_3$, let $R_{n_1, n_2, n_3}$ be the polynomial $k$-algebra with variables $x_{ij}^{(q)}$ where $1 \leq i \leq n_1$, $1 \leq j \leq n_2$ and $0 \leq q \leq n_3$. Let $\mathfrak{I}$ be the ideal of $R_{r_1,r_2,s+l-1}$ generated by equations \eqref{equation:solvehomwitt1} and \eqref{equation:solvehomwitt2}. Let $\mathbf{Y}_s$ be the scheme theoretic closure of $\mathbf{X}_s = \mathrm{Spec}\,R_{r_1,r_2,s+l-1}/\mathfrak{I}$ under the canonical morphism $\mathrm{Spec}\,R_{r_1,r_2,s+l-1} \to \mathrm{Spec}\,R_{r_1,r_2,s-1}$ induced by the natural inclusion $i:R_{r_1,r_2,s-1} \hookrightarrow R_{r_1,r_2,s+l-1}$. Thus $\mathbf{Y}_s$ is affine and is isomorphic to $\mathrm{Spec}\,R_{r_1,r_2,s-1}/i^{-1}(\mathfrak{I}) =: \mathrm{Spec}\,R_s$.
\begin{itemize}
\item If $s \leq l$, then $i^{-1}(\mathfrak{I})$ is generated by equations \eqref{equation:solvehomwitt2} for all $1 \leq i \leq r_1$, $1 \leq j \leq r_2$ and $0 \leq q \leq s-1$.
\item If $s > l$, then $i^{-1}(\mathfrak{I})$ is generated by equations \eqref{equation:solvehomwitt2} for all $1 \leq i \leq r_1$, $1 \leq j \leq r_2$ and $0 \leq q \leq l-1$, and also equations \eqref{equation:solvehomwitt1} for all $1 \leq i \leq r_1$, $1 \leq j \leq r_2$ and $0 \leq q \leq s-l-1$.
\end{itemize}
For each $k$-algebra $R$ (not necessarily perfect), the set of $R$-valued points $\mathbf{Y}_s(R)$ is set of all $W_s(R)$-linear maps 
\[h[s] :M_1 \otimes_{W_s(k)} W_s(R) \to M_2 \otimes_{W_s(k)} W_s(R)\] 
with the property that there exists a lift 
\[h: M_1 \otimes_{W(k)} W(R) \to M_2 \otimes_{W(k)} W(R))\]
such that for each $x \in M$, if $\varphi_1(x) \in p^iM \; \backslash \; p^{i+1}M$, then we have
\[h \circ (\varphi_1 \otimes_{W(k)} \sigma_{R}) (x \otimes 1_{W(R)}) \equiv (\varphi_2 \otimes_{W(k)} \sigma_{R}) \circ h(x \otimes 1_{W(R)})\]
modulo $M \otimes_{W(k)} \theta^{i+s}(W(R))$. It is clear that $\mathbf{Y}_s(R)$ has a functorial group structure under addition, and thus $\mathbf{Y}_s$ is a group scheme. Let $\mathbf{Hom}_s(\mathcal{M}_1, \mathcal{M}_2) := (\mathbf{Y}_s)_{\mathrm{red}}$. If no confusions can occur, we denote $\mathbf{Hom}_s(\mathcal{M}_1, \mathcal{M}_2)$ by $\mathbf{H}_s$. From the construction of $(R_s)_{\mathrm{red}}$, it is clear that $\mathbf{H}_s$ is a smooth group scheme of finite type over $k$, and $\mathbf{H}_s(k) = \mathrm{Hom}_s(\mathcal{M}_1, \mathcal{M}_2)$.

The definition of $\mathbf{H}_s$ would not be very useful if it would depend on the choices of $\mathcal{B}_1$ and $\mathcal{B}_2$. We now show that $\mathbf{H}_s$ does not depend on the choices of $\mathcal{B}_1$ and $\mathcal{B}_2$. Let $\mathcal{B}_1'$ and $\mathcal{B}_2'$ be other $W(k)$-bases of $M_1$ and $M_2$ respectively. Let $T = (t_{ij})$ be the change of basis matrix from $\mathcal{B}_1$ to $\mathcal{B}_1'$ and $T^{-1} = (t'_{ij})$ be its inverse. Let $S = (s_{ij})$ be the change of basis matrix from $\mathcal{B}_2$ to $\mathcal{B}_2'$ and $S^{-1} = (s'_{ij})$ be its inverse. Let $U' = [\varphi_1]_{\mathcal{B}'_1}^{\mathcal{B}'_1}$ and $V' = [\varphi_2]_{\mathcal{B}'_2}^{\mathcal{B}'_2}$ be the matrix representations of $\varphi_1$ and $\varphi_2$ with respect to $\mathcal{B}_1'$ and $\mathcal{B}_2'$ respectively. We get that $T^{-1}U'\sigma(T) = U$, $S^{-1}V'\sigma(S) = V$ and $TU^{-1}\sigma^{-1}(T^{-1}) = U'^{-1}$. Let $W'$ be the transpose of the cofactor matrix of $U'$, then $W'/\mathrm{det}(U') = U'^{-1}$. Let $Y$ be the $r_2 \times r_1$ matrix $[h]_{\mathcal{B}'_1}^{\mathcal{B}'_2} = (y_{ij})_{1 \leq i \leq r_2, 1 \leq j \leq r_1}$ representing $h$ with respect to $\mathcal{B}'_1$ and $\mathcal{B}'_2$. Therefore we have $X = S^{-1} Y T$. By solving $V'\sigma(Y)\sigma(W'/\mathrm{det}(U')) \equiv Y$ modulo $p^s$, we get a similar system of equations like \eqref{equation:solvehomwitt1} and \eqref{equation:solvehomwitt2}, with $d$ replaced by $d'$, $v_{in}$ replaced by $v'_{in}$, $x_{nm}$ replaced by $y_{nm}$, and $w_{mj}$ replaced by $w'_{mj}$. They generate an ideal $\mathfrak{I}'$ of a polynomial algebra $R'_{r_1,r_2,s+l-1}$ with variables $y_{ij}^{(q)}$. We now construct an isomorphism $\iota : R_{r_1,r_2,s+l-1} \to R'_{r_1,r_2,s+l-1}$ induced by the equality $X = S^{-1}YT$. More precisely, as the $(i,j)$-entry of $S^{-1}YT$ is $\sum_{l,m} s_{il}'y_{lm}t_{mj}$, we define $\iota(x_{ij}^{(q)}) = \sum_{l,m} P_{3,q}(s'_{il},y_{lm},t_{mj})$. It is easy to see that $\iota$ is an isomorphism as its inverse $\eta$ can be constructed by the equality $Y = SXT^{-1}$ in a similar way. Now we show that $\iota$ induces a well-defined homomorphism $\iota: R_{r_1,r_2,s+l-1}/\mathfrak{I} \to R'_{r_1,r_2,s+l-1}/\mathfrak{I}'$. Suppose that $f \in \mathfrak{I}$, then we want to show that $\iota(f) \in \mathfrak{I}'$. This is equivalent to show that if $V\sigma(X)\sigma(U^{-1}) = X$, then $V'\sigma(Y)\sigma(U'^{-1}) = Y$, assuming that $X = S^{-1}YT$. Indeed, we have $TU^{-1}\sigma^{-1}(T^{-1}) = U'^{-1}$, and $S^{-1}V'\sigma(S) = V$, we get
\begin{gather*}
Y = SXT^{-1} = SV\sigma(X)\sigma(U^{-1})T^{-1}= S(S^{-1}V'\sigma(S))\sigma(X)\sigma(U^{-1})T^{-1} =\\
V'\sigma(S)\sigma(X)\sigma(T^{-1}U'^{-1}\sigma^{-1}(T)) T^{-1}= 
V'\sigma(Y)\sigma(U'^{-1})TT^{-1}= V'\sigma(Y)\sigma(U'^{-1}).
\end{gather*}
Thus the induced $\iota$ is well-defined. By the same token, we can show that the inverse $\eta$ also induces a well-defined homomorphism at the level of quotient $k$-algebras. As $\iota$ and $\eta$ are inverses of each other, we know that $R_{r_1,r_2,s+l-1}/\mathfrak{I} \cong R'_{r_1,r_2,s+l-1}/\mathfrak{I}'$. Let $\mathbf{Y}'_s$ be the scheme theoretic closure of $\mathrm{Spec}\,R'_{r_1,r_2,s+l-1}/\mathfrak{I}'$ under the canonical morphism $\mathrm{Spec}\,R'_{r_1,r_2,s+l-1} \to \mathrm{Spec}\,R'_{r_1,r_2,s-1}$ induced by the natural inclusion $i':R'_{r_1,r_2,s-1} \hookrightarrow R'_{r_1,r_2,s+l-1}$. It is clear that $\mathbf{Y}_s$ is isomorphic to $\mathbf{Y}'_s$ as $k$-schemes. To see that they are also isomorphic as $k$-group schemes under addition, it is enough to see that the definition $\iota$ and $\eta$ respect addition because if $X_1 =S^{-1}Y_1T$ and $X_2 = S^{-1}Y_2T$, then $X_1+X_2 = S^{-1}(Y_1+Y_2)T$. Thus the definition of $\mathbf{H}_s$ does not depend on the choice of basis.

If $\mathcal{M}_1 = \mathcal{M}_2 = \mathcal{M}$, then $r_1=r_2=r$. In this case, we denote $\mathbf{Hom}_s(\mathcal{M}_1, \mathcal{M}_2)$ by $\mathbf{End}_s(\mathcal{M})$ or for simplicity $\mathbf{E}_s$ if no confusions can occur.

Now we assume that $\mathcal{M}_1 = \mathcal{M}_2 = \mathcal{M}$ (thus $r_1=r_2=r$) and construct a group scheme $\mathbf{Aut}_s(\mathcal{M})$ whose $k$-valued points is $\mathrm{Aut}_s(\mathcal{M})$. Here we make use of a simple fact of Witt vectors: for any $k$-algebra $R$, an element $x \in W(R)$ is invertible if and only if $x^{(0)}$ is a unit in $R$. Put 
\[T_s := R_{r,r,s-1}[\frac{1}{\mathrm{det}(x_{ij}^{(0)})}]/i^{-1}(\mathfrak{I}).\]
Then $\mathrm{Spec}\,T_s(R)$ contains all the multiplicative invertible elements in $\mathbf{Y}_s(R)$. It is the set of all $W_s(R)$-linear automorphisms 
\[h[s] :M \otimes_{W_s(k)} W_s(R) \to M \otimes_{W_s(k)} W_s(R)\] 
with the property that there exists a lift $h \in \mathrm{GL}_{M \otimes_{W(k)} W(R)}(W(R))$ of $h[s]$ such that for each $x \in M$, if $\varphi(x) \in p^iM \; \backslash \; p^{i+1}M$, then we have
\[h \circ (\varphi \otimes_{W(k)} \sigma_{R}) (x \otimes 1_{W(R)}) \equiv (\varphi \otimes_{W(k)} \sigma_{R}) \circ h(x \otimes 1_{W(R)})\]
modulo $M \otimes_{W(k)} \theta^{i+s}(W(R))$. If $h_1[s], h_2[s] \in \mathrm{Spec}\, T_s(R)$, then $(h_1h_2)[s]$ is in $\mathrm{Spec}\, T_s(R)$. Here $(h_1h_2)[s]$ is $h_1h_2$ modulo $\theta^s$. It coincides with the notation that $h[s]$ is $h$ modulo $p^s$ when $R$ is perfect. Hence $\mathrm{Spec}\, T_s(R)$ has a functorial group structure under composition and thus $\mathrm{Spec}\, T_s$ is a group scheme. Let $\mathbf{A}_s = \mathbf{Aut}_s(\mathcal{M}) := \mathrm{Spec}\,(T_s)_{\mathrm{red}}$. Then $\mathbf{A}_s(k) = \mathrm{Aut}_s(\mathcal{M})$ is the group under composition of automorphisms of $F$-truncations modulo $p^s$ of $\mathcal{M}$. From the construction of $(T_s)_{\mathrm{red}}$, it is clear that $\mathbf{A}_s$ is a smooth group scheme of finite type over $k$ and, as a scheme, it is an open subscheme of $\mathbf{E}_s$. We now study an important invariant  $\gamma_{\mathcal{M}}(s) := \mathrm{dim}(\mathbf{Aut}_s(\mathcal{M}))$ associated to $\mathcal{M}$. As $\mathbf{E}_s$ is smooth, all connected components of $\mathbf{E}_s$ have the same dimension. Therefore $\gamma_{\mathcal{M}}(s) = \mathrm{dim}(\mathbf{E}_s)$.

\begin{proposition} \label{proposition:nonincreasingdelta}
For every $l \geq 0$, the sequence $(\gamma_{\mathcal{M}}(s+l) - \gamma_{\mathcal{M}}(s))_{s \geq 1}$ is a non-increasing sequence of non-negative integers. Therefore, we have a chain of inequalities $0 \leq \gamma_{\mathcal{M}}(1) \leq \gamma_{\mathcal{M}}(2) \leq \cdots.$
\end{proposition}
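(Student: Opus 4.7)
The plan is to reduce the statement to the case $l = 1$ and then separately establish the non-negativity $\gamma_{\mathcal{M}}(s+1) \ge \gamma_{\mathcal{M}}(s)$ and the concavity $\gamma_{\mathcal{M}}(s+2) - \gamma_{\mathcal{M}}(s+1) \le \gamma_{\mathcal{M}}(s+1) - \gamma_{\mathcal{M}}(s)$ for every $s \ge 1$. The general $l \ge 0$ case then follows by telescoping, since $\gamma_{\mathcal{M}}(s+l) - \gamma_{\mathcal{M}}(s) = \sum_{i=0}^{l-1}(\gamma_{\mathcal{M}}(s+i+1) - \gamma_{\mathcal{M}}(s+i))$ is a sum of non-increasing non-negative sequences in $s$, hence itself non-increasing and non-negative.

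The central tool I would introduce is the multiplication-by-$p$ morphism $\mu_p : \mathbf{E}_s \to \mathbf{E}_{s+1}$ of smooth $k$-group schemes, defined on $R$-points by $h[s] \mapsto ph[s+1]$ for any lift $h$. Writing $E = \mathrm{End}_{W(k)}(M)$ and $\Psi(h) := \varphi h \varphi^{-1}$, the assignment is well defined: different lifts of $h[s]$ differ by an element of $p^sE$, which is carried into $p^{s+1}E$ by multiplication by $p$, and if $\Psi(h) - h \in p^sE$ then $\Psi(ph) - ph = p(\Psi(h) - h) \in p^{s+1}E$. On $k$-points the kernel of $\mu_p$ is trivial, because $ph \equiv 0 \bmod p^{s+1}$ forces $h \equiv 0 \bmod p^s$; hence the scheme-theoretic image $\mu_p(\mathbf{E}_s) \subseteq \mathbf{E}_{s+1}$ has dimension $\gamma_{\mathcal{M}}(s)$, and non-negativity follows immediately.

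For the concavity, I would consider the commutative square formed by the two instances of $\mu_p$ and the reduction morphisms $\pi_{s+1,s}$ and $\pi_{s+2,s+1}$, which induces a morphism of smooth $k$-group scheme quotients
\[\bar\pi : \mathbf{E}_{s+2}/\mu_p(\mathbf{E}_{s+1}) \longrightarrow \mathbf{E}_{s+1}/\mu_p(\mathbf{E}_s),\]
whose source and target have dimensions $\gamma_{\mathcal{M}}(s+2) - \gamma_{\mathcal{M}}(s+1)$ and $\gamma_{\mathcal{M}}(s+1) - \gamma_{\mathcal{M}}(s)$ respectively. Concavity thus reduces to verifying that $\bar\pi$ has a trivial kernel on $k$-points. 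The key computation is this: given $h[s+2] \in \mathbf{E}_{s+2}(k)$ with $\pi_{s+2,s+1}(h[s+2]) \in \mu_p(\mathbf{E}_s)$, one has $h \equiv py \bmod p^{s+1}$ for some valid $\mathbf{E}_s$-lift $y$, so $h \in pE$, and I write $h = pz$ with $z \in E$. The hypothesis $\Psi(h) - h \in p^{s+2}E$ translates into $\Psi(z) - z \in p^{s+1}E$, which a fortiori forces $\Psi(z) \in z + p^{s+1}E \subseteq E$; thus $z$ is a valid $\mathbf{E}_{s+1}$-lift and $h[s+2] = \mu_p(z[s+1]) \in \mu_p(\mathbf{E}_{s+1})$, as required.

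The most delicate point is the recognition that the a priori condition $\Psi(z) \in E$ --- which is part of what it takes for $z$ to define an element of $\mathbf{E}_{s+1}$ --- comes for free from the stronger congruence $\Psi(z) - z \in p^{s+1}E$, so that no further adjustment of the lift $z$ by an element of lower-order denominators is needed. This is what enables the concavity to be proven by a clean diagram chase rather than by a coordinate-wise analysis in an $F$-basis involving the Hodge slopes and the valuation of $\det\varphi$.
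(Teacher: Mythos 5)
Your proposal is correct and relies on the same core mechanism as the paper's proof, namely the multiplication-by-$p$ monomorphism $\mathbf{E}_s\hookrightarrow\mathbf{E}_{s+1}$ whose image is exactly the kernel of reduction to $\mathbf{E}_1$; the paper packages this as an exact sequence $0\to\mathbf{E}_s\xrightarrow{p^{t-s}}\mathbf{E}_t\xrightarrow{\pi_{t,t-s}}\mathbf{E}_{t-s}$ on points over perfect algebras, computes $\dim\mathrm{Im}(\pi_{t,t-s})=\gamma_{\mathcal{M}}(t)-\gamma_{\mathcal{M}}(s)$, and obtains the non-increasing property for general $l$ directly from the factorization $\pi_{s+1+l,l}=\pi_{s+l,l}\circ\pi_{s+1+l,s+l}$, whereas you reduce to $l=1$ and telescope, phrasing the same containment $\mathrm{Im}(\pi_{s+2,1})\subseteq\mathrm{Im}(\pi_{s+1,1})$ as injectivity of the induced map on quotients. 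Your detailed verification that $z$ is automatically a valid $\mathbf{E}_{s+1}$-lift is precisely the exactness check underlying the paper's sequence, so the two arguments differ only in presentation.
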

\begin{proof}
For each pair of integers $t \geq s$, there is a canonical reduction homomorphism $\pi_{t,s}: \mathbf{E}_t \to \mathbf{E}_s$. For every perfect $k$-algebra $R$, and every $h[s] \in \mathbf{E}_s(R)$, there is a lift $h$ of $h[s]$ such that $\varphi h \varphi^{-1} \equiv h$ modulo $p^s$, then $\varphi p^{t-s}h \varphi^{-1} \equiv p^{t-s}h$ modulo $p^t$. Hence we get a monomorphism $p^{t-s} : \mathbf{E}_s \to \mathbf{E}_t$ that sends $h[s]$ to $p^{t-s}h[s]$ at the level of $R$-valued points. For every perfect $k$-algebra $R$ and every $h[t] \in \mathbf{E}_t(R)$, $h[t] = p^{t-s}(h'[s])$ for some $h'[s] \in \mathbf{E}_s(R)$ if and only if $h[s]$ belongs to the kernel of $\pi_{t,t-s}$. Hence we have an exact sequence on the level of $R$-valued points
\[0  \xrightarrow{\; \; \; \; \; \; \;} \mathbf{E}_s(R) \xrightarrow{\; \; \; p^{t-s} \; \; \;} \mathbf{E}_t(R) \xrightarrow{\pi_{t,t-s}} \mathbf{E}_{t-s}(R).\]
The dimension of $\mathrm{Im}(\pi_{t,t-s})$ is equal to $\gamma_{\mathcal{M}}(t) - \gamma_{\mathcal{M}}(s) \geq 0$. Because $\pi_{s+1+l, l} = \pi_{s+l, l } \circ \pi_{s+1+l, s+l}$, $\mathrm{Im}(\pi_{s+1+l,l})$ is a subgroup scheme of $\mathrm{Im}(\pi_{s+1,l})$. Hence the dimension of $\mathrm{Im}(\pi_{s+1+l,l})$, which is $\gamma_{\mathcal{M}}(s+1+l) - \gamma_{\mathcal{M}}(s+1)$, is less than or equal to the dimension of $\mathrm{Im}(\pi_{s+l,l})$, which is $\gamma_{\mathcal{M}}(s+l) - \gamma_{\mathcal{M}}(s)$.
\end{proof}

Recall an $F$-crystal $\mathcal{M}$ is ordinary if its Hodge polygon and Newton polygon coincide. It is well known that the ordinary $F$-crystals over $k$ are precisely those $F$-crystals over $k$ which are direct sums of $F$-crystals of rank $1$. 

\begin{proposition} \label{proposition:ordinaryimpliesdimensionzero}
Let $\mathcal{M}$ be an ordinary $F$-crystal, then $\gamma_{\mathcal{M}}(s) = 0$ for all $s \geq 1$.
\end{proposition}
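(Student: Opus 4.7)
The plan is to reduce the problem to showing that $\mathbf{End}_s(\mathcal{M})(k) = \mathrm{End}_s(\mathcal{M})$ is a finite set. Since $\mathbf{E}_s$ is smooth (hence reduced) and of finite type over the algebraically closed field $k$, finiteness of its set of $k$-points forces $\dim(\mathbf{E}_s) = 0$, and the observation made immediately before the proposition that $\gamma_{\mathcal{M}}(s) = \dim(\mathbf{E}_s)$ then yields the conclusion.

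First I would pick a convenient basis. Because $\mathcal{M}$ is ordinary and $k$ is algebraically closed, $\mathcal{M}$ decomposes as a direct sum of rank $1$ $F$-crystals, and each rank $1$ summand $(W(k)w, \varphi)$ with $\varphi(w) = p^e u w$ can be normalized by replacing $w$ with $\lambda w$ for a unit $\lambda \in W(k)^\times$ satisfying $\sigma(\lambda)/\lambda = u^{-1}$ (a Lang-type equation that admits a solution over algebraically closed $k$). Hence one may choose a $W(k)$-basis $\mathcal{B} = \{v_1, \ldots, v_r\}$ of $M$ such that $\varphi(v_j) = p^{e_j} v_j$ for every $j$, where $e_1 \leq \cdots \leq e_r$ are the Hodge slopes of $\mathcal{M}$.

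The second step is an entry-wise analysis of the defining condition from Definition \ref{definition:homftruncation}. For an endomorphism $h$ of $M$ with matrix $(x_{ij})$ in $\mathcal{B}$, a direct computation gives that $\varphi h \varphi^{-1}$ has matrix with $(i,j)$-entry equal to $p^{e_i - e_j} \sigma(x_{ij})$. Requiring this matrix to lie in $\mathrm{End}_{W(k)}(M)$ forces $x_{ij} \in p^{\max(0,\, e_j - e_i)} W(k)$, and the congruence $\varphi h \varphi^{-1} \equiv h \pmod{p^s}$ becomes
\[p^{e_i - e_j} \sigma(x_{ij}) \equiv x_{ij} \pmod{p^s}\]
for each pair $(i,j)$. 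If $e_i > e_j$, iterating this congruence $s$ times yields $x_{ij} \equiv p^{s(e_i - e_j)} \sigma^s(x_{ij}) \equiv 0 \pmod{p^s}$. If $e_i < e_j$, writing $x_{ij} = p^{e_j - e_i} z_{ij}$ turns the relation into $\sigma(z_{ij}) \equiv p^{e_j - e_i} z_{ij} \pmod{p^s}$; iterating $s$ times and using that $\sigma$ preserves the $p$-adic valuation on $W(k)$ again gives $x_{ij} \equiv 0 \pmod{p^s}$. Finally, if $e_i = e_j$, the relation reads $\sigma(x_{ij}) \equiv x_{ij} \pmod{p^s}$, whose set of solutions in $W_s(k)$ is the finite set $W_s(\mathbb{F}_p) \cong \mathbb{Z}/p^s\mathbb{Z}$.

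Putting the three cases together shows that the reduction of $h$ modulo $p^s$ is determined by choosing at most $r^2$ entries from $\mathbb{Z}/p^s\mathbb{Z}$, so $\mathrm{End}_s(\mathcal{M})$ is finite and the proposition follows. I do not anticipate a serious obstacle here: the only mildly nontrivial ingredient is the basis normalization, which is classical for ordinary $F$-crystals over algebraically closed fields, and the three-case analysis is a routine $p$-adic iteration once the defining congruence is written out in coordinates.
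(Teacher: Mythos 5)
Your proof is correct and follows essentially the same approach as the paper: both diagonalize $\varphi$ on an $F$-basis of the ordinary $F$-crystal so that $\varphi(v_j) = p^{e_j}v_j$, and then analyze the defining congruence entry-by-entry to see that the off-diagonal blocks (where $e_i \neq e_j$) vanish and the diagonal blocks satisfy a Frobenius-fixed condition leaving only finitely many values. The only cosmetic difference is that the paper concludes directly that the representing $k$-algebra of $\mathbf{E}_s$ is finite dimensional, while you pass through the finiteness of the set of $k$-points of $\mathbf{E}_s$ and then invoke that a reduced $k$-scheme of finite type with finitely many $k$-points has dimension zero.
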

\begin{proof}
If $\mathcal{M}$ is ordinary, then $\mathcal{M} = \oplus_{i=1}^t \mathcal{M}_i$ where $\mathcal{M}_i$ are isoclinic ordinary $F$-crystals. Thus there exists an $F$-basis $\mathcal{B} = \{v_1, v_2, \dots, v_r\}$ of $M$ such that $\varphi(v_i) = p^{e_i}v_i$ for $1 \leq i \leq r$. The ideal $\mathfrak{I}$ that defines the representing $k$-algebra of $\mathbf{E}_s(\mathcal{M})$ is generated by equations of the following two types:
\begin{enumerate}
\item $\sigma(x_{ij}^{(r)}) - x_{ij}^{(r)}$ for all $r$ and $i, j \in I_l$ for all $1 \leq l \leq t$;
\item $x^{(r)}_{ij}$ for all $r$ and $i, j$ that don't belong to the same $I_l$.
\end{enumerate}
It is clear now that representing $k$-algebra is finite dimensional over $k$. Thus $\mathbf{E}_s$ is of dimension zero, so is $\mathbf{A}_s$.
\end{proof}

\section{Isomorphism classes of $F$-truncations}

In this section, we follow the ideas of \cite{Vasiu:dimensions} and \cite{Vasiu:levelm} to define a group action for each $s \geq 1$ whose orbits parametrize the isomorphism classes of $F_s(\mathcal{M}(g))$ for all $g \in \mathrm{GL}_M(W(k))$. We show that the stabilizer of the identity element of this action has the same dimension as $\mathbf{Aut}_s(\mathcal{M})$, which allows us to study the non-decreasing sequence $(\gamma_{\mathcal{M}}(i))_{i\geq 1}$ via the orbits and the stabilizers of the action. The main result of this section is Theorem \ref{theorem:monotonicitygamma}, which is a partial generalization of \cite[Theorem 1]{Vasiu:dimensions}. It will play an important role in the proof of the Main Theorem in Section \ref{section:proofmaintheorem}.

\subsection{Group schemes}
In this subsection, we will introduce some affine group schemes that are necessary to define the group actions in order to study isomorphism classes of $F$-truncations.

Let $\mathcal{M} = (M, \varphi)$ be an $F$-crystal over $k$. Recall $\mathrm{\bf GL}_M$ is the group scheme over $\mathrm{Spec}\,W(k)$ with the property that for every $W(k)$-algebra $S$, $\mathrm{\bf GL}_M(S)$ is the group of $S$-linear automorphism of $M \otimes_{W(k)} S$. Put $V = M \otimes_{W(k)} B(k)$, then we have canonical identifications
\[\mathrm{\bf GL}_{V} = \mathrm{\bf GL}_M \times_{W(k)} \mathrm{Spec}\,B(k) = \mathrm{\bf GL}_{\varphi^{-1}(M)} \times_{W(k)}\mathrm{Spec}\,B(k).\] 
Let ${\bf G}$ be the scheme theoretic closure of $\mathrm{\bf GL}_V$ in $\mathrm{\bf GL}_M \times \mathrm{\bf GL}_{\varphi^{-1}(M)}$ embedded via the composite homomorphism
\[\mathrm{\bf GL}_V \xrightarrow{\Delta} \mathrm{\bf GL}_V \times \mathrm{\bf GL}_V \to \mathrm{\bf GL}_M \times \mathrm{\bf GL}_{\varphi^{-1}(M)}.\]
For any flat $W(k)$-algebra $S$, $\mathbf{G}(S)$ contains all $h \in \mathbf{GL}_{M \otimes_{W(k)} S}(S)$ with the property that $h(\varphi^{-1}(M) \otimes_{W(k)} S) = \varphi^{-1}(M) \otimes_{W(k)} S$. Let $P_{\bf G}: {\bf G} \to \mathrm{\bf GL}_M$ be the composition of the inclusion and the first projection $\mathbf{GL}_M \times \mathbf{GL}_{\varphi^{-1}(M)} \to \mathbf{\bf GL}_M$. 

Let $\mathcal{B} = \{v_1, v_2, \dots, v_r\}$ be an $F$-basis of $\mathcal{M}$. There are two direct sum decompositions of $M = \bigoplus_{j=1}^{t} \widetilde{F}^j_{\mathcal{B}}(M) =  \bigoplus_{j=1}^{t} p^{-f_j} \varphi(\widetilde{F}^j_{\mathcal{B}}(M))$, which implies that $\varphi^{-1}(M) = \bigoplus_{j=1}^tp^{-f_i}\widetilde{F}^j_{\mathcal{B}}(M)$. With respect to $\mathcal{B}$, the representing $k$-algebras of the following affine group schemes are clear: 
\begin{itemize}
\item $\displaystyle \mathrm{\bf GL}_V = \mathrm{Spec}\,B(k)[x_{ij} \; | \; 1 \leq i, j \leq r][\frac{1}{\mathrm{det}(x_{ij})}]$; 
\item $\displaystyle \mathrm{\bf GL}_M = \mathrm{Spec}\,W(k)[x_{ij} \; | \; 1 \leq i , j \leq r][\frac{1}{\mathrm{det}(x_{ij})}]$;
\item $\displaystyle \mathrm{\bf GL}_{\varphi^{-1}(M)} = \mathrm{Spec}\,W(k)[p^{\delta_{ij}}x_{ij} \; | \; 1 \leq i, j \leq r][\frac{1}{\mathrm{det}(x_{ij})}],$ where $\delta_{ij} = f_l-f_m$ if $i \in I_l$ and $j \in I_m$; see Subsection \ref{subsection:filtrationfcry} for the definition of $I_l$ and $I_m$. Note that $\mathrm{det}(p^{\delta_{ij}}x_{ij}) = \mathrm{det} (x_{ij})$ as for each permutation $\pi$ of $\{1, 2, \dots, r\}$, we have $\prod_{i=1}^r p^{\delta_{i\pi(i)}}x_{i\pi(i)} = \prod_{i=1}^r x_{i\pi(i)}$.
\item $\displaystyle {\bf G} = \mathrm{Spec}\,W(k)[p^{\epsilon_{ij}}x_{ij} \; | \; 1 \leq i, j \leq r][\frac{1}{\mathrm{det}(x_{ij})}],$ where $\epsilon_{ij} = \mathrm{min}(0, \delta_{ij})$. For any affine scheme $\mathbf{H}$, let $R_{\mathbf{H}}$ be the ring such that $\mathbf{H} = \mathrm{Spec} \, R_{\mathbf{H}}$. Let $\mathcal{K}$ be the kernel of the composition
\[R_{\mathbf{GL}_M} \otimes R_{\mathbf{GL}_{\varphi^{-1}(M)}} \to R_{\mathbf{GL}_V} \otimes R_{\mathbf{GL}_V} \to R_{\mathbf{GL}_V}.\]
Then $R_{\mathbf{G}} \cong R_{\mathbf{GL}_M} \otimes R_{\mathbf{GL}_{\varphi^{-1}(M)}} / \mathcal{K}$. It is easy to see that the natural homomorphism
\[R_{\mathbf{GL}_M} \otimes R_{\mathbf{GL}_{\varphi^{-1}(M)}} / \mathcal{K} \to W(k)[p^{\epsilon_{ij}}x_{ij} \; | \; 1 \leq i, j \leq r][\frac{1}{\mathrm{det}(x_{ij})}]\]
is an isomorphism of $W(k)$-algebras.
\end{itemize}

\begin{proposition}
The scheme ${\bf G}$ is a connected smooth, affine group scheme over $\mathrm{Spec}\,W(k)$ of relative dimension $r^2$.
\end{proposition}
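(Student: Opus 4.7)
The plan rests on the explicit coordinate description of $R_{\mathbf{G}} = W(k)[p^{\epsilon_{ij}}x_{ij} \mid 1 \leq i,j \leq r][1/\det(x_{ij})]$ already extracted in the excerpt, so the proposition reduces to a bookkeeping argument. My first step is to introduce the change of variables $y_{ij} := p^{\epsilon_{ij}}x_{ij}$. Since the $x_{ij}$ are algebraically independent over $B(k)$ and each $y_{ij}$ is a nonzero scalar multiple of $x_{ij}$, the $r^{2}$ elements $y_{ij}$ are algebraically independent over $W(k)$. So the ring $W(k)[p^{\epsilon_{ij}}x_{ij} \mid 1 \leq i,j \leq r]$ is a polynomial ring over $W(k)$ in $r^{2}$ variables $y_{ij}$.

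The key observation is that $\det(x_{ij})$, when rewritten in the $y$'s, is still an honest polynomial and not merely a Laurent polynomial. Indeed, expanding
\[
\det(x_{ij}) = \sum_{\pi \in S_r} \mathrm{sgn}(\pi) \prod_{i=1}^{r} p^{-\epsilon_{i\pi(i)}}\, y_{i\pi(i)},
\]
each exponent $-\epsilon_{i\pi(i)} = \max(0,-\delta_{i\pi(i)})$ is a non-negative integer, so $\det(x_{ij}) \in W(k)[y_{ij}]$. For $\pi$ the identity we have $\delta_{ii}=0$ and hence $-\epsilon_{ii}=0$, so the monomial $\prod_{i}y_{ii}$ occurs with coefficient $1$; in particular $\det(x_{ij})$ is a nonzero polynomial whose reduction modulo $p$ is nonzero. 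Therefore
\[
R_{\mathbf{G}} \;\cong\; W(k)[y_{ij}]\Bigl[\tfrac{1}{\det(x_{ij})}\Bigr],
\]
and $\mathbf{G}$ is identified with the distinguished open subscheme of $\mathbb{A}^{r^{2}}_{W(k)}$ complementary to the hypersurface $\{\det(x_{ij})=0\}$. From this realization, smoothness over $W(k)$, affineness, and relative dimension $r^{2}$ are all immediate: an open subscheme of smooth affine space is smooth, affine, of the same relative dimension.

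For connectedness, the ring $W(k)[y_{ij}][1/\det(x_{ij})]$ is a localization of the integral domain $W(k)[y_{ij}]$, hence itself a domain; thus $\mathbf{G}$ is integral and in particular irreducible and connected. Finally, the group-scheme structure is inherited from the defining construction: $\mathbf{G}$ is the scheme-theoretic closure of the subgroup scheme $\mathbf{GL}_{V}$ under the diagonal embedding $\mathbf{GL}_{V} \hookrightarrow \mathbf{GL}_{M} \times \mathbf{GL}_{\varphi^{-1}(M)}$, and over the Dedekind base $\mathrm{Spec}\,W(k)$ the scheme-theoretic closure of a flat subgroup scheme in a flat ambient group scheme is again a subgroup scheme; equivalently, $\mathbf{G}$ is flat over $W(k)$ (its coordinate ring is torsion-free) and the multiplication and inversion maps of $\mathbf{GL}_{M} \times \mathbf{GL}_{\varphi^{-1}(M)}$ restrict to $\mathbf{G}$ because they do so on the schematically dense generic fibre $\mathbf{GL}_{V}$.

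I do not expect any real obstacle in this proof: the only point requiring care is the verification that no negative powers of $p$ appear in $\det(x_{ij})$ after the substitution $y_{ij}=p^{\epsilon_{ij}}x_{ij}$, which follows from the non-negativity of each $-\epsilon_{i\pi(i)}$. Everything else — smoothness, dimension, connectedness, and the group-scheme axioms — is a formal consequence of the identification with an open subscheme of $\mathbb{A}^{r^{2}}_{W(k)}$ and of the generic-fibre description.
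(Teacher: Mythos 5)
Your proof is correct and takes the same route as the paper, namely identifying $\mathbf{G}$ with the principal open subscheme of $\mathbb{A}^{r^2}_{W(k)}$ where $\det$ is invertible via the substitution $y_{ij}=p^{\epsilon_{ij}}x_{ij}$; the paper states this in one line and leaves the verification that $\det(x_{ij})$ remains a polynomial in the $y_{ij}$ (and the flat scheme-theoretic-closure argument for the group structure) implicit, whereas you spell them out.
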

\begin{proof}
As $G$ is a principal open subscheme of the affine space $\mathrm{Spec} \, W(k)[p^{\epsilon_{ij}}x_{ij} \; | \; 1 \leq i, j \leq r]$ over $W(k)$, it is affine, smooth, integral and of relative dimension $r^2$. From this the lemma follows.
\end{proof}

Fix an $F$-basis $\mathcal{B}$ of $\mathcal{M}$. If $l \neq m$, let ${\bf G}_{l,m}$ be the maximal subgroup scheme of $\mathbf{GL}_M$ that fixes both 
\[\widetilde{F}^1_\mathcal{B}(M) \oplus \cdots \oplus \widetilde{F}^{m-1}_\mathcal{B}(M)\oplus \widetilde{F}^{m+1}_\mathcal{B}(M) \oplus \cdots \oplus \widetilde{F}^t_\mathcal{B}(M) \; \, \textrm{and} \; \, \widetilde{F}^l_\mathcal{B}(M) \oplus \widetilde{F}^m_\mathcal{B}(M)/\widetilde{F}^l_\mathcal{B}(M).\]
With respect to the $F$-basis $\mathcal{B}$, the (multiplicative) group scheme $\mathbf{G}_{l,m}$ is isomorphic to $\mathrm{Spec}\,W(k)[x_{ij} \; | \; i \in I_l, j \in I_m]$.
If $R$ is a $W(k)$-algebra, then
\[{\bf G}_{l,m}(R) = 1_{M \otimes_{W(k)} R} + \mathrm{Hom}(\widetilde{F}^m_{\mathcal{B}}(M),\widetilde{F}^l_{\mathcal{B}}(M)) \otimes_{W(k)} R,\]
and thus ${\bf G}_{l,m} \cong \mathbb{G}_a^{h_{f_l}h_{f_m}}$. If $l = m$, let ${\bf G}_{l,l}$ be $\mathbf{GL}_{\widetilde{F}^l_{\mathcal{B}}(M)}$. With respect to the $F$-basis $\mathcal{B}$, ${\bf G}_{l,l}$ is isomorphic to $ \mathrm{Spec}\,W(k)[x_{ij} \; | \; i, j \in I_l][\frac{1}{\mathrm{det}(x_{ij})}].$
Put
\[\mathbf{G}_+ = \prod_{1 \leq m < l \leq t} \mathbf{G}_{l,m} = \mathbf{G}_{t,t-1} \times \mathbf{G}_{t,t-2} \times \mathbf{G}_{t-1, t-2} \cdots \times \mathbf{G}_{3,2}   \times \mathbf{G}_{t,1} \times \cdots \times \mathbf{G}_{3,1} \times \mathbf{G}_{2,1},\]
\[\mathbf{G}_- = \prod_{1 \leq l < m \leq t} \mathbf{G}_{l,m} = \mathbf{G}_{1,2} \times \mathbf{G}_{1,3} \times \cdots \times \mathbf{G}_{1,t} \times  \mathbf{G}_{2,3} \times \cdots \mathbf{G}_{t-2,t-1} \times \mathbf{G}_{t-2,t} \times \mathbf{G}_{t-1,t},\]
\[{\bf G}_0 := \prod_{l=1}^t \mathbf{G}_{l,l}, \qquad \mathrm{and} \qquad \widetilde{\bf G} := \mathbf{G}_+ \times \mathbf{G}_0 \times \mathbf{G}_-.\]
With respect to the $F$-basis $\mathcal{B}$,
\[\widetilde{\bf G} = \mathrm{Spec}\,W(k)[x_{ij} \; | \; 1 \leq i,j \leq r][\frac{1}{\prod_{l=1}^t\mathrm{det}(x_{ij})_{i,j\in I_l}}].\]
Let $P_m : \widetilde{\bf G} \to \mathbf{GL}_M$ be the natural product morphism, and let $P_{{\widetilde{\bf G}}}$ be the composition
\[P_m \circ \big (1_{\bf G_+} \times 1_{\bf G_0}\times \prod_{1 \leq l < m \leq t} (\bullet )^{p^{f_m-f_l}} \big ): \widetilde{\bf G} \to \widetilde{\bf G} \to \mathrm{\bf GL}_M.\]

For any morphism $Q: \mathbf{H}_1 \to \mathbf{H}_2$ of affine schemes, let $Q' : R_{\mathbf{H}_2} \to R_{\mathbf{H}_1}$ be the natural homomorphism induced by $Q$.
\begin{lemma}
There is a unique morphism $P: \widetilde{\bf G} \to {\bf G}$ such that $P_{\bf G} \circ P = P_{\widetilde{\bf G}}$.
\end{lemma}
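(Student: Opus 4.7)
The plan is to separate uniqueness and existence, reducing both to the generic fiber over $B(k)$, where $P_{\mathbf{G}}$ becomes the identification $\mathbf{G}_{B(k)} \cong \mathbf{GL}_V$ coming from the diagonal embedding. Both $\widetilde{\mathbf{G}}$ and $\mathbf{G}$ are integral and flat over $\mathrm{Spec}\,W(k)$, their coordinate rings sitting inside $B(k)[x_{ij}][1/\det(x_{ij})]$ (appropriately localized). For uniqueness, since $P_{\mathbf{G}, B(k)}$ is an isomorphism, any two lifts of $P_{\widetilde{\mathbf{G}}}$ agree on $\widetilde{\mathbf{G}}_{B(k)}$, and integrality of $\widetilde{\mathbf{G}}$ propagates this to the whole scheme.

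For existence, I would use the fact that $\mathbf{G}$ is the scheme-theoretic closure of $\mathbf{GL}_V$ (embedded diagonally) in $\mathbf{GL}_M \times \mathbf{GL}_{\varphi^{-1}(M)}$: by flatness of $\widetilde{\mathbf{G}}$, a morphism $\widetilde{\mathbf{G}} \to \mathbf{GL}_M \times \mathbf{GL}_{\varphi^{-1}(M)}$ factors through $\mathbf{G}$ as soon as its generic fiber lands in the diagonal. Taking the first component to be $P_{\widetilde{\mathbf{G}}}$, I would construct a second component $P' : \widetilde{\mathbf{G}} \to \mathbf{GL}_{\varphi^{-1}(M)}$ corresponding to the same element $g = A_+ A_0 \tau(A_-)$, where $\tau$ denotes the twist $\prod_{l<m}(\bullet)^{p^{f_m-f_l}}$ built into $P_{\widetilde{\mathbf{G}}}$, now viewed as an automorphism of $\varphi^{-1}(M)$. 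At the ring level, defining $P'$ amounts to sending each generator $p^{\delta_{ij}} x_{ij}$ of $R_{\mathbf{GL}_{\varphi^{-1}(M)}} = W(k)[p^{\delta_{ij}} x_{ij}][1/\det(x_{ij})]$ to $p^{\delta_{ij}} X_{ij}$, where $X_{ij} = P_{\widetilde{\mathbf{G}}}^{*}(x_{ij})$ is the $(i,j)$-entry of $A_+ A_0 \tau(A_-)$; the task is to verify $p^{\delta_{ij}} X_{ij} \in R_{\widetilde{\mathbf{G}}}$.

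When $\delta_{ij} \geq 0$ (i.e.\ $l \geq m$ for $i \in I_l, j \in I_m$) there is nothing to check. When $l < m$, I would expand
\[X_{ij} = \sum_{n \leq l} \sum_{k \in I_n} (A_+ A_0)_{ik}\, \tau(A_-)_{kj},\]
using that $A_+$ is block lower triangular with $1$'s on the diagonal and $A_0$ is block diagonal, so $(A_+ A_0)_{ik}$ vanishes for $i \in I_l, k \in I_n$ with $n > l$. Every index $n$ occurring in the sum satisfies $n \leq l < m$, hence $\tau(A_-)_{kj}$ contributes a factor $p^{f_m - f_n}$ with $f_m - f_n \geq f_m - f_l = -\delta_{ij}$, yielding $p^{\delta_{ij}} X_{ij} \in R_{\widetilde{\mathbf{G}}}$. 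Combined with $\det(g) = \det(A_0) \in R_{\widetilde{\mathbf{G}}}^{\times}$, this defines $P'$ as a morphism to $\mathbf{GL}_{\varphi^{-1}(M)}$. Its generic fiber coincides tautologically with that of $P_{\widetilde{\mathbf{G}}}$ under the identification $\mathbf{GL}_M \times_{W(k)} \mathrm{Spec}\,B(k) = \mathbf{GL}_V = \mathbf{GL}_{\varphi^{-1}(M)} \times_{W(k)} \mathrm{Spec}\,B(k)$, so $(P_{\widetilde{\mathbf{G}}}, P')$ factors through $\mathbf{G}$ and is the desired $P$.

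The main obstacle is the block-triangular bookkeeping that produces the crucial $p^{f_m - f_l}$ divisibility of the off-diagonal entries of $A_+ A_0 \tau(A_-)$; everything else is formal manipulation of flat, integral $W(k)$-schemes and the universal property of the scheme-theoretic closure defining $\mathbf{G}$.
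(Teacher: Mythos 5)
Your proof is correct, and it takes a route that is organized differently from the paper's while resting on the same essential computation. The paper works entirely at the level of coordinate rings: it uses the explicit presentation $R_{\bf G}=W(k)[p^{\epsilon_{ij}}x_{ij}][1/\det]$ and builds $P'$ directly as a $W(k)$-algebra map from $R_{\bf G}$ to $R_{\widetilde{\bf G}}$, checking compatibility with $P_{\widetilde{\bf G}}'$ and verifying that the determinant localizes correctly. You instead separate uniqueness (via $p$-torsion-freeness of $R_{\widetilde{\bf G}}$ and agreement on the generic fiber, where $P_{\bf G}$ becomes an isomorphism onto $\mathbf{GL}_V$) from existence, and for existence you construct the second component $P':\widetilde{\bf G}\to\mathbf{GL}_{\varphi^{-1}(M)}$ so that $(P_{\widetilde{\bf G}},P')$ factors through the scheme-theoretic closure $\bf G$. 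Since $\epsilon_{ij}=\min(0,\delta_{ij})$, the integrality constraints you must verify on $p^{\delta_{ij}}X_{ij}$ for $l<m$ are exactly the nontrivial constraints on $p^{\epsilon_{ij}}$-generators in the paper's approach, so the mathematical content coincides. What your version buys is that the crucial $p$-divisibility of the off-diagonal blocks of $A_+A_0\tau(A_-)$ — the bound $f_m-f_n\ge f_m-f_l$ coming from $A_+A_0$ being block lower-triangular so that only $n\le l$ contributes — is spelled out, whereas the paper records the determinant identity but leaves the entrywise divisibility implicit. Your flatness/closure packaging is also slightly more robust in that it does not require the explicit generators-and-relations presentation of $R_{\bf G}$, only the defining property that $\bf G$ is the closure of the diagonal $\mathbf{GL}_V$ together with $W(k)$-flatness of $\widetilde{\bf G}$.
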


\begin{proof}
The morphism $P_{\bf G}: {\bf G} \to \mathbf{GL}_M$ at the level of $W(k)$-algebras
\[P_{\bf G}' : W(k)[x_{ij} \; | \; 1 \leq i, j \leq r][\frac{1}{\mathrm{det}(x_{ij})}] \longrightarrow W(k)[p^{\epsilon_{ij}}x_{ij} \; | \; 1 \leq i, j \leq r][\frac{1}{\mathrm{det}(x_{ij})}]\]
is such that $P_{\bf G}'(x_{ij}) = x_{ij}$; see the coordinate description of $\mathbf{G}$ for the definition of $\epsilon_{ij}$. Note that $\epsilon_{ij} \leq 0$. The morphism $P_{\widetilde{\bf G}} : \widetilde{\bf G} \to \mathbf{GL}_M$ at the level of $W(k)$-algebras
\[P_{\widetilde{\bf G}}': W(k)[x_{ij} | 1 \leq i,j \leq r][\frac{1}{\mathrm{det}(x_{ij})}] \longrightarrow W(k)[x_{ij} | 1 \leq i,j \leq r][\frac{1}{\prod_{l=1}^t\mathrm{det}(x_{ij})_{i,j \in I_l}}]\]
is such that $P_{\widetilde{\bf G}}'(x_{ij}) = P_ m'(p^{-\epsilon_{ij}}x_{ij})$. It is easy to check (at the level of $R$-valued points) that $P'_{\widetilde{\bf G}}(\mathrm{det}(x_{ij})) = \prod_{l=1}^t\mathrm{det}(x_{ij})_{i,j \in I_l}$. This forces $P:\widetilde{\bf G} \to {\bf G}$ to satisfy $P'(p^{\epsilon_{ij}}x_{ij}) = P'_m(x_{ij})$ and it indeed defines a $W(k)$-algebra homomorphism
\[\mbox{\small $\displaystyle P': W(k)[p^{\epsilon_{ij}}x_{ij} \, | \, 1 \leq i, j \leq r][\frac{1}{\mathrm{det}(x_{ij})}] \to W(k)[x_{ij} \, | \, 1 \leq i, j \leq r][\frac{1}{\prod_{l=1}^t \mathrm{det}(x_{ij})_{i,j \in I_l}}],$}\]
as 
\[P'(\mathrm{det}(x_{ij})) = \mathrm{det}(P'(x_{ij})) = \mathrm{det}(P'_m(p^{-\epsilon_{ij}}x_{ij})) = \mathrm{det}(P'_{\widetilde{\bf G}}(x_{ij})) = \prod_{l=1}^t \mathrm{det}(x_{ij})_{i,j \in I_l}. \qedhere\]
\end{proof}

\begin{lemma} \label{lemma:matrixmanipulation}
For every $k$-algebra $R$, the morphism $P$ induces a bijection on $W_s(R)$-valued points for all positive integer $s$.
\end{lemma}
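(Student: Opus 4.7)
My plan is to prove the lemma by exhibiting a set-theoretic inverse to $P$ on $W_s(R)$-valued points via explicit block LDU (Gauss) decomposition with respect to the slope decomposition $M = \bigoplus_{j=1}^t \widetilde{F}^j_{\mathcal{B}}(M)$. First I would unpack both sides in coordinates. An element of $\widetilde{\bf G}(W_s(R))$ is a triple $(g_+, g_0, g_-)$ consisting of a block lower-unitriangular matrix, a block-diagonal matrix whose $l$-th diagonal block lies in $\mathbf{G}_{l,l}(W_s(R))$, and a block upper-unitriangular matrix, all with entries in $W_s(R)$. An element of $\mathbf{G}(W_s(R))$, read off from the generators $y_{ij} = p^{\epsilon_{ij}} x_{ij}$ of the coordinate ring of $\mathbf{G}$, is a block matrix $(h_{l,m})$ with arbitrary $W_s(R)$-valued entries in each block, subject to invertibility of the underlying $\mathbf{GL}_M$-matrix (in which the $(l,m)$-block for $l<m$ becomes $p^{f_m-f_l}\, h_{l,m}$). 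Under this identification, $P$ is simply the matrix product $(g_+, g_0, g_-) \mapsto g_+ g_0 g_-$: the $p^{f_m-f_l}$ scaling baked into $P_{\widetilde{\bf G}}$ is exactly what reconciles the $\mathbf{GL}_M$-product with the pre-divided $y$-coordinates on $\mathbf{G}$.

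With this description in place, the bijection claim reduces to a unique block LDU factorization of any $h \in \mathbf{G}(W_s(R))$. The key observation is that each diagonal block $h_{l,l}$ is automatically invertible: the $\mathbf{GL}_M$-image of $h$ reduces modulo $p$ to a block lower-triangular matrix (since every upper-right block carries a factor $p^{f_m-f_l} \geq p$), so invertibility of $h$ in $\mathbf{GL}_M(W_s(R))$ forces each $h_{l,l} \bmod p$ to be invertible, and hence $h_{l,l}$ itself is invertible in $\mathbf{GL}(W_s(R))$. Standard block Gauss elimination then proceeds by induction on $t$: set the $(l,1)$-block of the prospective $g_+$ equal to $h_{l,1} h_{1,1}^{-1}$ for $l>1$, the $(1,1)$-block of $g_0$ equal to $h_{1,1}$, the $(1,m)$-block of $g_-$ equal to $h_{1,1}^{-1} h_{1,m}$ for $m>1$, and apply the induction hypothesis to the Schur complement $(h_{l,m} - h_{l,1} h_{1,1}^{-1} h_{1,m})_{2 \leq l,m \leq t}$. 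Uniqueness is immediate by reading off the first block row and column of any putative decomposition.

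The main potential obstacle is that $W_s(R)$ is in general neither flat nor $p$-torsion-free over $W(k)$, so a naive attempt to invert $P$ at the level of $\mathbf{GL}_M$-matrices would require problematic ``division by $p^{f_m - f_l}$''. This is entirely avoided by working throughout with the $\mathbf{G}$-coordinates $y_{ij}$, which record the pre-divided quantities as independent generators valued in $W_s(R)$: the inverse map performs no division, and simply assigns the $y$-coordinate block $h_{l,m}$ (for $l<m$) to the $(l,m)$-block of $g_-$. This is also the conceptual reason the lemma holds uniformly for all $k$-algebras $R$ (not just perfect ones) and all positive integers $s$.
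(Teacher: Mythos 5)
Your identification of $\mathbf{G}(W_s(R))$ with tuples of $W_s(R)$-valued blocks whose diagonal blocks are invertible is correct, as is the argument that diagonal invertibility is forced by invertibility of the reconstructed $\mathbf{GL}_M$-matrix. The fatal flaw is the claim that, in these pre-divided coordinates, $P$ becomes the naive matrix product $g_+ g_0 g_-$. Compute directly for $t=2$ with $g_+ = \bigl(\begin{smallmatrix} 1 & 0 \\ Y & 1 \end{smallmatrix}\bigr)$, $g_0 = \bigl(\begin{smallmatrix} X_1 & 0 \\ 0 & X_2 \end{smallmatrix}\bigr)$, $g_- = \bigl(\begin{smallmatrix} 1 & Z \\ 0 & 1 \end{smallmatrix}\bigr)$: since $P_{\widetilde{\bf G}}$ first scales $g_-$, the $\mathbf{GL}_M$-image of $P(g_+,g_0,g_-)$ is $\bigl(\begin{smallmatrix} X_1 & p^{f_2-f_1}X_1 Z \\ YX_1 & p^{f_2-f_1}YX_1Z + X_2 \end{smallmatrix}\bigr)$. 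Pre-dividing the $(1,2)$-block by $p^{f_2-f_1}$ does return $X_1Z$, but the diagonal $(2,2)$-block is \emph{not} pre-divided, so the $\mathbf{G}$-coordinates of the image are $(X_1,\, X_1Z,\, YX_1,\, p^{f_2-f_1}YX_1Z + X_2)$. The factor $p^{f_2-f_1}$ survives in the lower-right block, and $P$ is not the naive product.

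Consequently your Schur complement $h_{l,m} - h_{l,1}h_{1,1}^{-1}h_{1,m}$ does not invert $P$, and in fact the recursion can break because its diagonal need not be invertible. Already for $t=2$ with all four blocks equal to $1$ --- a legitimate point of $\mathbf{G}(W_s(R))$, since the reconstructed matrix $\bigl(\begin{smallmatrix} 1 & p^{f_2-f_1} \\ 1 & 1 \end{smallmatrix}\bigr)$ has unit determinant $1 - p^{f_2-f_1}$ --- your Schur complement is $1 - 1 = 0$. The powers of $p$ you tried to explain away are not an artifact of a bad parametrization: they belong in the correct Schur complement, and they are exactly what keep it congruent to $h_{l,m}$ modulo $p$ (hence invertible on the diagonal) so that the induction can proceed. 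The paper avoids the whole coordinate computation over $W_s(R)$ by first establishing the bijection over $W(R)$ --- where it records the key point that the term $\widetilde{Y}\widetilde{X}\widetilde{Z}$ vanishes modulo $p$ --- and then deduces the $W_s(R)$ statement by lifting along $W(R) \to W_s(R)$. If you want a direct argument over $W_s(R)$, you must first derive the actual formula for $P$ in pre-divided coordinates and run the LDU with the resulting $p$-decorated Schur complement.
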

\begin{proof}
We first show that $P$ induces a bijection on $W(R)$-valued points.

We start by showing that the image of $P_{\widetilde{\bf G}}(W(R))$ is the same as the image $P_{\bf G}(W(R))$ in $\mathbf{GL}_M(W(R))$, which is
\[S := \{(p^{-\epsilon_{ij}}r_{ij})_{1 \leq i,j \leq r} \; | \; r_{ij} \in W(R),\, \mathrm{det}(r_{ij}) \in W(R)^*\}.\]
As $t \times t$ block matrices, these are matrices of the type
\[N = \begin{pmatrix}
N_{11} & p^{f_2-f_1}N_{12} & p^{f_3-f_1}N_{13} & \cdots & p^{f_t-f_1}N_{1t} \\
N_{21} & N_{22} & p^{f_3-f_2}N_{23} & \cdots & p^{f_t-f_2}N_{2t} \\
\vdots & \vdots & \vdots & \ddots & \vdots \\
N_{t1} & N_{t2} & N_{t3} & \cdots & N_{tt}
\end{pmatrix}\]
where $N_{lm}$ is an arbitrary $h_{f_l} \times h_{f_m}$ matrix for $1 \leq l, m \leq t$ with entries in $W(R)$, and $\mathrm{det}(N) \in W(R)^*$. We claim that $N_{ii}$ are invertible for $1 \leq i \leq t$. After reduction modulo $\theta(W(R))$, the matrix $N$ is a lower triangular block matrix. The determinant of $N$ modulo $\theta(W(R))$ is $\prod_{i=1}^t \mathrm{det}(N_{ii})$ modulo $\theta(W(R))$, which is a unit in $R$, this implies that $\mathrm{det}(N_{ii})$ modulo $\theta(W(R))$ is a unit in $R$ and hence $\mathrm{det}(N_{ii})$ is a unit in $W(R)$.

Let $X$ be an arbitrary $t \times t$ block matrix in $\mathbf{G}_0(W(R))$ so that the diagonal blocks are denoted by $X_{ii}$. If $l>m$, let $Y_{lm}$ be an arbitrary $t \times t$ block matrix in $\mathbf{G}_{l,m}(W(R))$ with $\widetilde{Y}_{lm}$ at $(l,m)$ block entry and $0$ at everywhere else. If $l<m$, let $Z_{lm}$ be an arbitrary $t \times t$ block matrix in $\mathbf{G}_{l,m}(W(R))$ with $p^{f_m-f_l}\widetilde{Z}_{lm}$ at $(l,m)$ block entry and $0$ at everywhere else. We need to show that the set
\[\{\prod_{1 \leq m < l \leq t} Y_{lm} X \prod_{1 \leq l <m \leq t} Z_{lm} \; | \; X, Y_{lm}, Z_{lm} \; \textrm{satisfy the conditions stated above}\}\]
is equal to the set $S$ of all $t \times t$ matrices $N$ as described above. Here the order of the product $\prod_{1 \leq m < l \leq t} Y_{lm}$ is the same as the order in the definition of $\mathbf{G}_+$. The order of the product $\prod_{1 \leq l <m \leq t} Z_{lm}$ is the same as the order in the definition of $\mathbf{G}_-$.

We use induction on $t$. The base case when $t=1$ is trivial. Suppose it is true for $t-1$. Then 
\[\prod_{1 \leq m < l \leq t-1} Y_{lm} X \prod_{1 \leq l <m \leq t-1} Z_{lm}
= \begin{pmatrix}
X_{11} & p^{f_2-f_1}X_{12} & \cdots & p^{f_{t-1}-f_1}A_{1,t-1} & 0 \\
X_{21} & X_{22} & \cdots & p^{f_{t-1}-f_2}X_{2,t-1} & 0 \\
\vdots & \vdots & \ddots & \vdots & \vdots \\
X_{t-1,1} & X_{t-1,2} & \cdots & X_{t-1,t-1} & 0 \\
0 & 0  & \cdots & 0 & X_{tt} 
\end{pmatrix}\]
satisfies $\mathrm{det}(X_{ii}) \in W(R)^*$, and each $X_{lm}$ is an arbitrary $h_{f_l} \times h_{f_m}$ matrix if $l \neq m$. We abbreviate this matrix by $\begin{pmatrix} \widetilde{X} & 0 \\ 0 & X_{tt} \end{pmatrix}$, then
\[\mbox{\small $\displaystyle \prod_{1 \leq m \leq t-1} Y_{tm} \begin{pmatrix} \widetilde{X} & 0 \\ 0 & X_{tt} \end{pmatrix} \prod_{1 \leq l \leq t-1} Z_{lt}= \begin{pmatrix} 1 & 0 \\ \widetilde{Y} & 1 \end{pmatrix} \begin{pmatrix} \widetilde{X} & 0 \\ 0 & X_{tt} \end{pmatrix} \begin{pmatrix} 1 & \widetilde{Z} \\ 0 & 1 \end{pmatrix} = \begin{pmatrix} \widetilde{X} & \widetilde{X} \widetilde{Z} \\ \widetilde{Y} \widetilde{X} & X_{tt}+\widetilde{Y}\widetilde{X}\widetilde{Z} \end{pmatrix}.$}\]
Here the matrix $\widetilde{Y} = (\widetilde{Y}_{t1}, \widetilde{Y}_{t2}, \dots, \widetilde{Y}_{t,t-1})$ has size $h_{f_t} \times (r-h_{f_t})$ and the matrix $\widetilde{Z} = (p^{f_t-f_1}\widetilde{Z}_{1t},\, p^{f_t-f_2}\widetilde{Z}_{2t},\, \dots,\, p^{f_t-f_{t-1}}\widetilde{Z}_{t-1,t})^T$ has size $(r-h_{f_t}) \times h_{f_t}$. As $\widetilde{X}$ is invertible, the right multiplication of $\widetilde{X}$ induces a bijection from the set of all $h_{f_t} \times (r-h_{f_t})$ matrices to itself. Thus $\widetilde{Y}\widetilde{X}$ can be any $h_{f_t} \times (r-h_{f_t})$ matrix with $\widetilde{X}$ fixed and $\widetilde{Y}$ varied. Multiplying $\widetilde{X}$ and $\widetilde{Z}$, we get
\[\begin{pmatrix}
X_{11} & p^{f_2-f_1}X_{12} & \cdots & p^{f_{t-1}-f_1}X_{1,t-1} \\
X_{21} & X_{22} & \cdots & p^{f_{t-1}-f_2}X_{2,t-1}\\
\vdots & \vdots & \ddots & \vdots \\
X_{t-1,1} & X_{t-1,2} & \cdots & X_{t-1,t-1}
\end{pmatrix} \begin{pmatrix} p^{f_t-f_1}\widetilde{Z}_{1t} \\ p^{f_t-f_2}\widetilde{Z}_{2t} \\ \vdots \\  p^{f_t-f_{t-1}}\widetilde{Z}_{t-1,t} \end{pmatrix} = \]
\[\begin{pmatrix}p^{f_t-f_1}(X_{11}\widetilde{Z}_{1t} + \cdots + X_{1,t-1}\widetilde{Z}_{t-1,t}) \\ p^{f_t-f_2}(p^{f_2-f_1}X_{21}\widetilde{Z}_{1t} + \cdots + X_{2,t-1}\widetilde{Z}_{t-1,t}) \\ \vdots \\ p^{f_t-f_{t-1}}(p^{f_{t-1}-f_1}X_{t-1,1}\widetilde{Z}_{1t} + \cdots + X_{t-1,t-1}\widetilde{Z}_{t-1,t})\end{pmatrix}.\]
To show that $\widetilde{X}\widetilde{Z}$ can be any matrix of the type 
\[(p^{f_t-f_1}N_{1t}, p^{f_t-f_2}N_{2t}, \dots, p^{f_t-f_{t-1}}N_{t-1,t})^T\]
with $\widetilde{X}$ fixed and $\widetilde{Z}$ varied, it is enough to show that the matrix 
\[\begin{pmatrix}
X_{11} & X_{12} & \cdots & X_{1,t-1}\\
p^{f_2-f_1}X_{21} & X_{22} & \cdots & X_{2,t-1}\\
\vdots & \vdots & \ddots & \vdots \\
p^{f_{t-1}-f_1}X_{t-1,1} & p^{f_{t-1}-f_2}X_{t-1,2} & \cdots & X_{t-1,t-1} \\
\end{pmatrix}
\]
is invertible. But this is so because $X_{ii}$ are invertible. When $\widetilde{X}$, $\widetilde{Y}$ and $\widetilde{Z}$ are fixed, $X_{tt}+\widetilde{Y}\widetilde{X}\widetilde{Z}$ can be an arbitrary invertible $h_t \times h_t$ matrix with $X_{tt}$ varied because $\widetilde{Y}\widetilde{X}\widetilde{Z}$ modulo $p$ is zero. Thus we have shown that $P_{\widetilde{\bf G}}(W(R))$ and $P_{\bf G}(W(R))$ are the same in $\mathbf{GL}_M(W(R))$.

To show that $P(W(R))$ is a bijection, it is enough to show that $P_{\widetilde{\bf G}}(W(R))$ is an injection. If this is true, as $P_{\bf G}(W(R))$ is an injection and the image of $P_{\widetilde{\bf G}}(W(R))$  and $P_{\bf G}(W(R))$ are the same, then $P(W(R))$ is a bijection. Suppose  
\begin{equation} \label{equation:productmatrixequal}
\prod_{1 \leq m < l \leq t} Y_{lm} X \prod_{1 \leq l < m \leq t} Z_{lm} = \prod_{1 \leq m < l \leq t} Y'_{lm} X' \prod_{1 \leq l < m \leq t} Z'_{lm},
\end{equation}
and we want to show that $Y_{lm} = Y'_{lm}$ for all $1 \leq m < l \leq t$, $X=X'$ and $Z_{lm} = Z'_{lm}$ for all $1 \leq l < m \leq t$. By the definition of $Y_{lm}$ and $Z_{lm}$ it suffices to show that $\prod_{1 \leq m < l \leq t} Y_{lm} = \prod_{1 \leq m < l \leq t} Y'_{lm}$ and $\prod_{1 \leq l < m \leq t} Z_{lm} =  \prod_{1 \leq l < m \leq t} Z'_{lm}$. Equality \eqref{equation:productmatrixequal} is equivalent to
\begin{equation} \label{equation:productmatrixequalreduced}
(\prod_{1 \leq m < l \leq t} Y'_{lm})^{-1}\prod_{1 \leq m < l \leq t} Y_{lm}X = X \prod_{1 \leq l < m \leq t} Z'_{lm} ( \prod_{1 \leq l < m \leq t} Z_{lm})^{-1}.
\end{equation}
Let $(\prod_{1 \leq m < l \leq t} Y'_{lm})^{-1}\prod_{1 \leq m < l \leq t} Y_{lm} = I + Y$ where $Y$ is strictly lower triangular and $\prod_{1 \leq l < m \leq t} Z'_{lm} ( \prod_{1 \leq l < m \leq t} Z_{lm})^{-1} = I+Z$ where $Z$ is strictly upper triangular. The equality \eqref{equation:productmatrixequalreduced} is equivalent to $YX = XZ$. It is easy to see that $Y = Z = 0$ as $X$ is a diagonal block matrix with invertible blocks $X_{ii}$. This completes the proof that $P(W(R))$ is a bijection.

To show that $P(W_s(R))$ is injective, let $\bar{f}_1, \bar{f}_2 \in \widetilde{\mathbf{G}}(W_s(R))$ with lifts $f_1, f_2 \in \widetilde{\mathbf{G}}(W(R))$ respectively such that $P(W_s(R))(\bar{f}_1) = P(W_s(R))(\bar{f}_2)$. The images of $P(W(R))(f_1)$ and $P(W(R))(f_2)$ under the reduction epimorphism $\mathbf{G}(W(R)) \to \mathbf{G}(W_s(R))$ are the same. Hence $P(W(R))(f_1)$ and $P(W(R))(f_2)$ are congruent modulo $\theta^s$. As $P(W(R))$ is a bijection, $f_1$ and $f_2$ are also congruent modulo $\theta^s$ as well. Hence $\bar{f}_1 = \bar{f}_2$.

 To show that $P(W_s(R))$ is surjective, let $\bar{f} \in {\bf G}(W_s(R))$, a lift $f \in {\bf G}(W(R))$ of $\bar{f}$ has a preimage $g \in \widetilde{\bf G}(W(R))$ such that $P(W(R))(g) = f$ because $P(W(R))$ is surjective. Thus the image of $g$ in $\widetilde{\bf G}(W_s(R))$ is a preimage of $\bar{f}$. This shows that $P(W_s(R))$ is bijective and thus completes the proof the lemma.
\end{proof}

\begin{corollary} \label{corollary:natidg}
The morphism $P: \widetilde{\bf G} \to {\bf G}$ induces an isomorphism $P_{W_s(k)} : \widetilde{\bf G}_{W_s(k)} \to {\bf G}_{W_s(k)}$ for each $s \geq 1$.
\end{corollary}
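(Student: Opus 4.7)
The plan is to combine the preceding lemma with $W(k)$-flatness of both group schemes and then induct on $s$ via the five lemma. For the base case $s=1$, applying the preceding lemma with $s=1$ gives that $P(R)\colon \widetilde{\bf G}(R) \to {\bf G}(R)$ is a bijection for every $k$-algebra $R$. Since any $W(k)$-algebra homomorphism into a $k$-algebra annihilates $p$, these sets identify canonically with $\widetilde{\bf G}_k(R)$ and ${\bf G}_k(R)$. Being a natural bijection of $k$-algebra-valued functors, $P_k$ is an isomorphism of $k$-schemes by Yoneda's lemma.

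For the inductive step, I would put $A = R_{\widetilde{\bf G}}$ and $B = R_{\bf G}$; both are $W(k)$-flat since $\widetilde{\bf G}$ and $\bf G$ are smooth over $\mathrm{Spec}\,W(k)$. Assuming $P' \bmod p^s \colon B/p^sB \to A/p^sA$ is an isomorphism (which is equivalent to $P_{W_s(k)}$ being an isomorphism), I would tensor the short exact sequence of $W(k)$-modules $0 \to k \xrightarrow{\cdot p^s} W_{s+1}(k) \to W_s(k) \to 0$ over $W(k)$ with $A$ (respectively with $B$). By flatness, this yields a short exact sequence
\[0 \to A/pA \to A/p^{s+1}A \to A/p^sA \to 0\]
and its analogue for $B$. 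The ring homomorphism $P'$ arranges these into a commutative diagram with exact rows whose left and right vertical arrows are $P' \bmod p$ and $P' \bmod p^s$, both isomorphisms by the base case and the inductive hypothesis respectively. The five lemma then forces the middle vertical arrow $P' \bmod p^{s+1}$ to be an isomorphism of $W_{s+1}(k)$-modules, and hence of $W_{s+1}(k)$-algebras, completing the induction.

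The substantive work is already done in the preceding lemma; the only real check remaining here is the exactness of the flat-tensored sequences, which is immediate from the smoothness (hence $W(k)$-flatness) of $\widetilde{\bf G}$ and $\bf G$. Note that it is essential to use both $s=1$ and $s=s$ in the five-lemma step, so the inductive setup cannot be avoided by appealing to Yoneda alone, since not every $W_s(k)$-algebra has the form $W_s(R)$ for a $k$-algebra $R$.
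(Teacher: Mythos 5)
Your proposal is correct and takes essentially the same approach as the paper: both establish the base case $s=1$ directly from Lemma~\ref{lemma:matrixmanipulation} and then run an induction on $s$ using $W(k)$-flatness of the two coordinate rings to produce short exact sequences of the form $0 \to A/pA \to A/p^{s+1}A \to A/p^{s}A \to 0$, concluding via the five lemma. Your remark that Yoneda alone cannot close the argument for $s>1$, because not every $W_s(k)$-algebra is of the form $W_s(R)$, is a worthwhile clarification that the paper leaves implicit.
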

\begin{proof}
If $s = 1$, then $P_{W_1(k)} = P_k$. It is an isomorphism by Lemma \ref{lemma:matrixmanipulation}. Suppose that $s>1$. As $R_{\bf G}$ and $R_{\widetilde{\bf G}}$ are $W_s(k)$-flat algebras, we get that $p^{s-1}R_{\bf G} / p^{s}R_{\bf G} \cong R_{\bf G}/pR_{\bf G}$ and $p^{s-1}R_{\widetilde{{\bf G}}} / p^{s}R_{\widetilde{{\bf G}}} \cong R_{\widetilde{\bf G}}/pR_{\widetilde{\bf G}}$ by the local criteria on flatness. As a result, $p^{s-1}R_{\bf G} / p^{s}R_{\bf G} \cong p^{s-1}R_{\widetilde{{\bf G}}} / p^{s}R_{\widetilde{{\bf G}}}$. We have the following commutative diagram:
\[\xymatrix{
0 \ar[r] & p^{s-1}R_{\bf G} / p^{s}R_{\bf G} \ar[r] \ar[d]^{\cong} & R_{\bf G}/p^{s}R_{\bf G} \ar[r] \ar[d]^{P'_{W_{s}(k)}} & R_{\bf G}/p^{s-1}R_{\bf G} \ar[r] \ar[d]^{P'_{W_{s-1}(k)}} & 0 \\
0 \ar[r] &  p^{s-1}R_{\widetilde{{\bf G}}} / p^{s}R_{\widetilde{{\bf G}}} \ar[r] & R_{\widetilde{\bf G}}/p^{s}R_{\widetilde{\bf G}} \ar[r] & R_{\widetilde{\bf G}}/p^{s-1}R_{\widetilde{\bf G}} \ar[r] & 0}.\]
An easy induction on $s$ using the five lemma concludes the proof of the lemma. 
\end{proof}

Let $\mathcal{B}$ be an $F$-basis of $\mathcal{M}$. The $\mathrm{Spec}\,W(k)$-scheme $\mathbf{GL}_M$ is represented by the $W(k)$-algebra $W(k)[x_{ij} \; | \; 1 \leq i, j \leq r][1/\mathrm{det}(x_{ij})]$. We construct the cocharacter $\mu : \mathbb{G}_m \to \mathbf{GL}_M$ (that depends on $\mathcal{B}$) defined by the $k$-algebra homomorphism 
\[\mu' : W(k)[x_{ij} \; | \; 1 \leq i, j \leq r][1/\mathrm{det}(x_{ij})] \to W(k)[x,1/x]\]
with the property that $\mu'(x_{ij}) = 0$ if $i \neq j$ and $\mu'(x_{ii}) = (1/x)^{e_{i}}$ for $1 \leq i \leq r$ where $e_1, e_2, \dots, e_r$ are the Hodge slopes of $(M, \varphi)$. Put $\sigma_{\mathcal{M}} := \varphi \mu(B(k))(p)$. It is a $\sigma$-linear isomorphism of $M$ defined by the rule $\sigma_{\mathcal{M}}(x) = p^{-f_j}\varphi(x)$ for $x \in \widetilde{F}^j_{\mathcal{B}}(M)$. It is well known \cite[A.1.2.6]{Fontaine:padicrep1} that there is a $\mathbb{Z}_p$-submodule $M_0 = \{ x \in M \; | \; \sigma_{\mathcal{M}}(x) = x\}$ of $M$, whose rank is the same as the rank of $M$ and such that $M = M_0 \otimes_{\mathbb{Z}_p} W(k)$. Note that the construction of $M_0$ also depends on $\mathcal{B}$. We fix a $\mathbb{Z}_p$-basis of $\mathcal{B}_0 = \{w_1, w_2, \dots, w_r\}$ of $M_0$. It induces a $\mathbb{Z}_p$-basis $\mathcal{B}_0^* = \{e_{ij} \; | \; 1 \leq i,j \leq r\}$ of  $\mathrm{End}_{\mathbb{Z}_p}(M_0)$ such that $e_{ij}(w_j) = w_i$. Note that 
\[\mathrm{End}_{W(R)}(M \otimes_{W(k)} W(R)) = \mathrm{End}_{\mathbb{Z}_p}(M_0) \otimes_{\mathbb{Z}_p} W(R).\]
Let $h = \sum_{i,j}a_{ij}e_{ij} \in \mathrm{End}_{W(R)} (M \otimes_{W(k)} W(R))$, $a_{ij} \in W(R)$ for all $1 \leq i, j \leq r$. Define 
\[\bar{\sigma}_{\mathcal{M}} : \mathrm{End}_{W(R)}(M \otimes_{W(k)} W(R)) \to \mathrm{End}_{W(R)}(M \otimes_{W(k)} W(R))\]
by the formula $\bar{\sigma}_{\mathcal{M}}(h) = \sum_{i,j} \sigma_R(a_{ij})e_{ij}$ and similarly, define
\[\bar{\sigma}_{\mathcal{M}} : \mathrm{End}_{W_s(R)}(M \otimes_{W(k)} W_s(R)) \to \mathrm{End}_{W_s(R)}(M \otimes_{W(k)} W_s(R))\]
by the formula $\bar{\sigma}_{\mathcal{M}}(h[s]) = (\sum_{i,j} \sigma_R(a_{ij})e_{ij})[s]$, where $h[s]$ is $h$ modulo $\theta^s$ (again this does not contradict to the previous convention that $h[s]$ is $h$ modulo $p^s$ when $R$ is perfect.) One can easily show that the definition of $\bar{\sigma}_{\mathcal{M}}$ does not depend on $\mathcal{B}_0$ and $\mathcal{B}^*_0$ but does depend on $\mathcal{B}$. If $R$ is a perfect field, then $\overline{\sigma}_{\mathcal{M}}$ satisfies $\bar{\sigma}_{\mathcal{M}}(h) = \sigma_{\mathcal{M}} h \sigma_{\mathcal{M}}^{-1}$, which is a formula that does not depend on the choice of $\mathcal{B}_0$ or $\mathcal{B}_0^*$ but does depend on $\mathcal{B}$ since $\sigma_{\mathcal{M}}$ does.

For every $ h \in \mathrm{End}_{W(R)}(M \otimes_{W(k)} W(R))$, define 
\[\varphi(h) = \bar{\sigma}_{\mathcal{M}} (\mu(B(R))(1/p) \circ h \circ \mu(B(R))(p)).\]
A priori, the definition of $\varphi(h)$ depends on the choice of the $F$-basis $\mathcal{B}$ of $\mathcal{M}$ as $\bar{\sigma}_{\mathcal{M}}$ and $\mu$ do. As $h = \sum_{i,j} a_{ij}e_{ij}$, $a_{ij} \in W(R)$, we get
\begin{align*}
\varphi(h) = & \; \bar{\sigma}_{\mathcal{M}} (\mu(B(R))(1/p) \circ h \circ \mu(B(R))(p)) \\
= &\;  \bar{\sigma}_{\mathcal{M}} (\sum_{i,j} (\mu(B(k))(1/p) \circ e_{ij} \circ \mu(B(k))(p)) \otimes a_{ij}) \\
= &\; \sum_{i,j} \bar{\sigma}_{\mathcal{M}} ((\mu(B(k))(1/p) \circ e_{ij} \circ \mu(B(k))(p)) \otimes \sigma_R(a_{ij}) \\
= &\; \sum_{i,j} \sigma_{\mathcal{M}} \mu(B(k))(1/p) \circ e_{ij} \circ \mu(B(k))(p) \sigma^{-1}_{\mathcal{M}} \otimes \sigma_R(a_{ij}) \\
= &\; \sum_{i,j} (\varphi \circ e_{ij} \circ \varphi^{-1}) \otimes \sigma_R(a_{ij}).
\end{align*}
Thus $\varphi(h)$ is a $B(R)$-linear endomorphism of $M \otimes_{W(k)} B(R)$ defined by the following rule: let $h = \sum_i h_i \otimes c_i$ under the natural identification (basis free)
\[\mathrm{End}_{W(R)}(M \otimes_{W(k)} W(R)) = \mathrm{End}_{W(k)}(M) \otimes_{W(k)} W(R).\]
For any $m \otimes b \in M \otimes_{W(k)} W(R)$, we have $\varphi(h)(m \otimes b) = \sum_i (\varphi \circ h_i \circ \varphi^{-1})(m) \otimes \sigma_R(c_i)b \in (M \otimes_{W(k)} B(k)) \otimes_{B(k)} B(R) = M \otimes_{W(k)} B(R)$. Thus the definition $\varphi(h)$ does not depend on the choice of $\mathcal{B}$. Note that $\varphi(h)$ might not be an element in $\mathrm{End}_{W(R)}(M \otimes_{W(k)} W(R))$ in general, but it is always an element in $\mathrm{End}_{W(R)}(M \otimes_{W(k)} B(R))$.

\begin{lemma}
For simplicity, set $\mu(B(R))(p) = \mu(p)$ and $\mu(B(R))(1/p) = \mu(1/p)$. For every $g \in {\bf G}_{l,m}(W(R))$, the following three formulae hold.
\begin{enumerate}
\item If $m<l$, then $\mu(p)g^{p^{f_l-f_m}} \mu(1/p) = g$.
\item If $m=l$, then $\mu(p)g \mu(1/p) = g$.
\item If $m>l$, then $\mu(p)g\mu(1/p) = g^{p^{f_m-f_l}}$.
\end{enumerate}
\end{lemma}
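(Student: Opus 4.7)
The plan is to unpack the cocharacter $\mu$ as a diagonal operator on the Hodge decomposition $M = \bigoplus_{j=1}^t \widetilde{F}^j_{\mathcal{B}}(M)$ and then verify each of the three formulae by a direct computation on each summand. The key preliminary observation is that the defining formula $\mu'(x_{ii}) = (1/x)^{e_i}$, combined with the fact that $e_i = f_j$ whenever the $i$-th $F$-basis vector lies in $\widetilde{F}^j_{\mathcal{B}}(M)$, forces $\mu(p) \in \mathbf{GL}_M(B(R))$ to act on $\widetilde{F}^j_{\mathcal{B}}(M) \otimes_{W(k)} B(R)$ by the scalar $p^{-f_j}$, and correspondingly $\mu(1/p)$ to act by $p^{f_j}$.

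Cases (1) and (3) I then handle simultaneously. Take $l \neq m$ and write a generic $g \in \mathbf{G}_{l,m}(W(R))$ as $g = 1_M + N$, where $N$ is the extension by zero of an element of $\mathrm{Hom}_{W(k)}(\widetilde{F}^m_{\mathcal{B}}(M), \widetilde{F}^l_{\mathcal{B}}(M)) \otimes_{W(k)} W(R)$. Since $N$ lands in $\widetilde{F}^l_{\mathcal{B}}(M)$ and vanishes on $\widetilde{F}^l_{\mathcal{B}}(M)$, one has $N^2 = 0$, so the binomial formula degenerates to $g^a = 1_M + aN$ for every non-negative integer $a$. Combining this with the scalar action of $\mu(p)$ from the first paragraph yields the master identity
\[\mu(p)\, g\, \mu(1/p) = 1_M + \mu(p)\, N\, \mu(1/p) = 1_M + p^{f_m - f_l}\, N,\]
valid a priori only in $\mathbf{GL}_M(B(R))$. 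When $m > l$, the right-hand side lies in $\mathbf{GL}_M(W(R))$ and equals $g^{p^{f_m - f_l}}$, proving (3). When $m < l$, one applies the same computation with $g$ replaced by $g^{p^{f_l - f_m}} = 1_M + p^{f_l - f_m} N$, which gives $1_M + p^{f_m - f_l} \cdot p^{f_l - f_m}\, N = 1_M + N = g$, proving (1).

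Case (2) is immediate from the scaling action: an element $g \in \mathbf{G}_{l,l}(W(R)) = \mathbf{GL}_{\widetilde{F}^l_{\mathcal{B}}(M)}(W(R))$ is the identity on $\widetilde{F}^j_{\mathcal{B}}(M)$ for $j \neq l$ and an automorphism of $\widetilde{F}^l_{\mathcal{B}}(M)$, while $\mu(p)$ and $\mu(1/p)$ restrict to the scalars $p^{-f_l}$ and $p^{f_l}$ on this summand; scalars commute with every automorphism, so $\mu(p)\, g\, \mu(1/p) = g$. There is essentially no obstacle here: the whole lemma reduces to recognizing $\mu(p)$ as a diagonal scaling on the Hodge pieces together with the nilpotency $N^2 = 0$. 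The only minor verification, needed for case (1), is that $g^{p^{f_l - f_m}}$ again lies in $\mathbf{G}_{l,m}(W(R))$, which is clear from the formula $g^a = 1_M + aN$.
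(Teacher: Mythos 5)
Your proof is correct and follows essentially the same route as the paper: both recognize $\mu(p)$ as scaling $\widetilde{F}^j_{\mathcal{B}}(M)$ by $p^{-f_j}$ and exploit the identity $g^a = 1_M + aN$ for $g = 1_M + N \in \mathbf{G}_{l,m}(W(R))$. Your version merely makes the nilpotency $N^2 = 0$ behind the binomial collapse explicit, and spells out cases (2) and (3), which the paper leaves to the reader.
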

\begin{proof}
We first prove (1) when $m<l$. By definition, $g \in {\bf G}_{l,m}(W(R))$ if and only if $g = 1_{M \otimes W(R)}+e$ where $e \in \mathrm{Hom}(\widetilde{F}^m_{\mathcal{B}}(M),\widetilde{F}^l_{\mathcal{B}}(M)) \otimes_{W(k)} W(R)$. If $m<l$, then 
\begin{align*}
\mu(p)g^{p^{f_l-f_m}}\mu(1/p) &= \mu(p)(1_{M \otimes_{W(k)} W(R)}+p^{f_l-f_m}e)\mu(1/p)  \\ &= 1_{M \otimes_{W(k)} W(R)} + p^{f_l-f_m}\mu(p)e\mu(1/p).
\end{align*}
Thus it suffices to show that $p^{f_l-f_m}\mu(p)e\mu(1/p) = e$. 

As $e \in \mathrm{Hom}(\widetilde{F}^m_{\mathcal{B}}(M),\widetilde{F}^l_{\mathcal{B}}(M)) \otimes_{W(k)}W(R)$, $\mu(1/p)$ acts on $\widetilde{F}^m_{\mathcal{B}}(M) \otimes_{W(k)} W(R)$ as $p^{f_m}$, and $\mu(p)$ acts on $\widetilde{F}^l_{\mathcal{B}}(M) \otimes_{W(k)} W(R)$ as $p^{-f_l}$, we get the desired equality.

The cases when $m=l$ and $m>l$ are similar and are left to the reader.
\end{proof}

\begin{corollary} For every $g \in {\bf G}_{l,m}(W(R))$, the following three formulae hold.
\begin{enumerate}
\item If $m<l$, then $\overline{\sigma}_{\mathcal{M}}(g^{p^{f_l-f_m}}) = \varphi(g)$.
\item If $m=l$, then $\overline{\sigma}_{\mathcal{M}}(g) = \varphi(g)$. 
\item If $m>l$, then $\overline{\sigma}_{\mathcal{M}}(g) = \varphi(g^{p^{f_m-f_l}})$.
\end{enumerate}
\end{corollary}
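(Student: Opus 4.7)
The plan is that this corollary is essentially a direct translation of the preceding lemma through the definition of $\varphi(h)$. Recall that by definition $\varphi(h) = \bar{\sigma}_{\mathcal{M}}(\mu(1/p) \circ h \circ \mu(p))$ for any $h \in \mathrm{End}_{W(R)}(M \otimes_{W(k)} W(R))$, where for brevity we write $\mu(p) = \mu(B(R))(p)$ and $\mu(1/p) = \mu(B(R))(1/p)$.

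I would proceed case by case, in each case solving the identity furnished by the lemma for the quantity $\mu(1/p) \circ g \circ \mu(p)$ and then applying $\bar{\sigma}_{\mathcal{M}}$ to both sides. For case (1), when $m<l$, the lemma gives $\mu(p) g^{p^{f_l-f_m}} \mu(1/p) = g$; multiplying on the left by $\mu(1/p)$ and on the right by $\mu(p)$ (both of which are honest automorphisms of $V = M \otimes_{W(k)} B(k)$) yields $g^{p^{f_l-f_m}} = \mu(1/p) g \mu(p)$, so applying $\bar{\sigma}_{\mathcal{M}}$ gives $\bar{\sigma}_{\mathcal{M}}(g^{p^{f_l-f_m}}) = \bar{\sigma}_{\mathcal{M}}(\mu(1/p) g \mu(p)) = \varphi(g)$ by the definition. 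Case (2) is immediate from the relation $\mu(p) g \mu(1/p) = g$, which rearranges to $g = \mu(1/p) g \mu(p)$, and case (3) follows analogously from $\mu(p) g \mu(1/p) = g^{p^{f_m-f_l}}$, which rearranges to $g^{p^{f_m-f_l}} = \mu(1/p) g \mu(p)$ so that $\bar{\sigma}_{\mathcal{M}}(g) = \varphi(g^{p^{f_m-f_l}})$ after noting that $\bar{\sigma}_{\mathcal{M}}$ is an algebra endomorphism and reapplying the definition to $g^{p^{f_m-f_l}}$.

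Strictly speaking one should verify that the resulting identities, a priori valid as endomorphisms of $M \otimes_{W(k)} B(R)$, are valid after the formal application of $\bar{\sigma}_{\mathcal{M}}$ to $g^{p^{f_l-f_m}}$ (resp.\ $g$, resp.\ $g$), which all lie in $\mathrm{End}_{W(R)}(M \otimes_{W(k)} W(R))$; this is clear because $g \in \mathbf{G}_{l,m}(W(R))$ so its integral powers also lie there. I expect no real obstacle here: the corollary is a mechanical consequence of the lemma together with the definition of $\varphi(\,\cdot\,)$, and the only thing that needs care is bookkeeping of which side of the equality one is on when moving $\mu(p)$ and $\mu(1/p)$ around.
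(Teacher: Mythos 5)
Your approach is exactly the one the paper has in mind (the corollary is stated without a proof environment, as an immediate consequence of the preceding lemma and the definition of $\varphi(\cdot)$), and cases (1) and (2) are handled correctly. In case (3), however, the rearrangement you wrote down is not valid: conjugating $\mu(p)\, g\, \mu(1/p) = g^{p^{f_m-f_l}}$ by $\mu(1/p)$ on the left and $\mu(p)$ on the right yields $g = \mu(1/p)\, g^{p^{f_m-f_l}}\, \mu(p)$, not $g^{p^{f_m-f_l}} = \mu(1/p)\, g\, \mu(p)$. The latter is in fact false as an identity of endomorphisms of $M \otimes_{W(k)} B(R)$: for $g = 1 + e$ with $m>l$, conjugation by $\mu(1/p)$ multiplies $e$ by $p^{f_l-f_m}$, a negative power of $p$, whereas $g^{p^{f_m-f_l}} = 1 + p^{f_m-f_l}e$. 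Moreover, applying $\bar{\sigma}_{\mathcal{M}}$ to your displayed identity would yield $\bar{\sigma}_{\mathcal{M}}(g^{p^{f_m-f_l}}) = \varphi(g)$, which is not the assertion of part (3). Applying $\bar{\sigma}_{\mathcal{M}}$ to the corrected identity $g = \mu(1/p)\, g^{p^{f_m-f_l}}\, \mu(p)$ gives $\bar{\sigma}_{\mathcal{M}}(g) = \varphi(g^{p^{f_m-f_l}})$ directly, which is what you wanted; so your final conclusion is right, but the intermediate rearrangement as written is the kind of left/right slip you flagged at the end of your own proposal.
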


\subsection{The group action $\mathbb{T}_s$} \label{subsection:groupaction}
Set ${\bf G}_s = \mathbb{W}_s({\bf G})$ and ${\bf D}_s = \mathbb{W}_s(\mathrm{\bf GL}_M)$. As ${\bf G}_{W_s(k)} = \widetilde{\bf G}_{W_s(k)}$, we have  $\widetilde{\bf G}_s :=\mathbb{W}_s(\widetilde{\bf G}) = {\bf G}_s$. The group action 
\[\mathbb{T}_s : {\bf G}_s \times_k {\bf D}_s \to {\bf D}_s\]
is defined on $R$-valued points as follows: For every $h[s] \in {\bf G}_s(R)$, $g[s] \in {\bf D}_s(R)$, let $h \in \mathbf{G}(W(R))$ be a lift of $h[s]$ under the reduction epimorphism $\mathbf{G}(W(R)) \to \mathbf{G}(W_s(R))$ and $g \in \mathbf{GL}_M(W(R))$ be a lift of $g[s]$ under the reduction epimorphism $\mathbf{GL}_M(W(R)) \to \mathbf{GL}_M(W_s(R))$. Define
\[\mathbb{T}_s(R)(h[s], g[s]) := (hg\varphi(h^{-1}))[s].\]
It is clear that the definition does not depend on the choices of lifts of $h[s]$ and $g[s]$ and does not depend on choice of basis.

To see that $(hg\varphi(h^{-1}))[s] \in {\bf D}_s(R)$, let us first recall the identification ${\bf G}_{W_s(k)} = \widetilde{\bf G}_{W_s(k)}$ from Corollary \ref{corollary:natidg}, thus $h[s] \in \mathbf{G}_s(R) = \mathbf{G}(W_s(R)) = \mathbf{G}_{W_s(k)}(W_s(R)$ is an element of $\widetilde{\mathbf{G}}_{W_s(k)}(W_s(R))$. We can g (non-uniquely) $h[s]$ as a product
\[\prod_{1 \leq m < l \leq t} h_{lm}[s]  h_0[s] \prod_{1 \leq l < m \leq t} h^{p^{f_m-f_l}}_{lm}[s],\]
where $\prod_{1 \leq m < l \leq t} h_{lm}[s] \in ({\bf G}_-)_{W_s(k)}(W_s(R))$, $h_0[s] \in (\mathbf{G}_0)_{W_s(k)}(W_s(R))$, and $\prod_{1 \leq l < m \leq t} h_{lm}[s] \in ({\bf G}_+)_{W_s(k)}(W_s(R))$. Therefore $(hg\varphi(h^{-1}))[s]$ is equal to
\begin{align*}
 & (\prod_{1 \leq m < l \leq t} h_{lm}  h_0 \prod_{1 \leq l < m \leq t} h^{p^{f_m-f_l}}_{lm} g\varphi( \prod_{1 \leq l < m \leq t} h^{p^{f_l-f_m}}_{lm} h^{-1}_{0} \prod_{1 \leq m < l \leq t} h^{-1}_{lm} ))[s]\\
= &(\prod_{1 \leq m < l \leq t} h_{lm} h_{0} \prod_{1 \leq l < m \leq t} h^{p^{f_m-f_l}}_{lm} g \prod_{1 \leq l < m \leq t} \varphi(h^{p^{f_l-f_m}}_{lm})\varphi(h^{-1}_{0}) \prod_{1 \leq m < l \leq t} \varphi(h^{-1}_{lm}))[s]\\
=&(\prod_{1 \leq m < l \leq t} h_{lm} h_{0} \prod_{1 \leq l < m \leq t} h^{p^{f_m-f_l}}_{lm} g \prod_{1 \leq l < m \leq t} \bar{\sigma}_{\mathcal{M}}(h^{-1}_{lm})\bar{\sigma}_{\mathcal{M}}(h^{-1}_{0}) \prod_{1 \leq m < l \leq t} \bar{\sigma}_{\mathcal{M}} ((h^{-1}_{lm})^{p^{f_l-f_m}}))[s]\\
=&\mbox{\small $\displaystyle \prod_{1 \leq m < l \leq t} h_{lm}[s] h_{0}[s] \prod_{1 \leq l < m \leq t} h^{p^{f_m-f_l}}_{lm}[s] g[s] \prod_{1 \leq l < m \leq t} \bar{\sigma}_{\mathcal{M}}(h^{-1}_{lm}[s])\bar{\sigma}_{\mathcal{M}}(h^{-1}_{0}[s]) \prod_{1 \leq m < l \leq t} \bar{\sigma}_{\mathcal{M}} ((h^{-1}_{lm}[s])^{p^{f_l-f_m}})$}
\end{align*}
which is in $\mathbf{D}_s(R)$. The above formula proves that $\mathbb{T}_s$ is a morphism.

For later use, we record the following formula when $R=k$ and $s=1$.
\begin{equation} \label{equation:actionlevel1}
\mathbb{T}_1(k)(h[1],g[1]) = \prod_{1 \leq m < l \leq t} h_{lm}[1] h_{0}[1] g[1] \prod_{1 \leq l < m \leq t} (\bar{\sigma}_{\mathcal{M}}(h^{-1}_{lm}[1]))(\bar{\sigma}_{\mathcal{M}}(h^{-1}_{0}[1])) 
\end{equation}

\subsection{Orbits and Stabilizers of $\mathbb{T}_s$} Let $1_M[s] \in \mathbf{D}_s(k)$. The image of the morphism
\[\Psi := \mathbb{T}_s \circ (1_{\mathbf{G}_s} \times_k 1_M[s]) : \mathbf{G}_s \cong \mathbf{G}_s \times_k \mathrm{Spec}\,k \to \mathbf{G}_s \times_k \mathbf{D}_s \to \mathbf{D}_s .\]
is the orbit of $1_M[s]$, which we denoted by $\mathbf{O}_s$. Its Zariski closure $\overline{\mathbf{O}}_s$ is a closed integral subscheme of $\mathbf{D}_s$. The orbit $\mathbf{O}_s$ is a smooth connected  open subscheme of $\overline{\mathbf{O}}_s$. 

\begin{proposition} \label{proposition:descriptionoforbits}
Let $g_1, g_2 \in \mathrm{\bf GL}_M(W(k))$. Then $g_1[s], g_2[s] \in \mathrm{\bf GL}_M(W_s(k)) = {\bf D}_s(k)$ belong to the same orbit of the action $\mathbb{T}_s$ if and only if $F_s(\mathcal{M}(g_1))$ is isomorphic to $F_s(\mathcal{M}(g_2))$.
\end{proposition}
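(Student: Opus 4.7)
The plan is to show that both conditions, when unwound carefully, reduce to the very same congruence in $\mathrm{GL}_M(W(k))$ modulo $p^s$. First I would translate the orbit condition. Two points $g_1[s], g_2[s] \in \mathbf{D}_s(k)$ lie in the same $\mathbb{T}_s$-orbit if and only if there exists $h[s] \in \mathbf{G}_s(k) = \mathbf{G}(W_s(k))$ with $\mathbb{T}_s(k)(h[s], g_1[s]) = g_2[s]$; since $\mathbf{G}$ is smooth over $W(k)$, I can lift $h[s]$ to $h \in \mathbf{G}(W(k))$, and the defining formula for $\mathbb{T}_s$ reads $hg_1\varphi(h^{-1}) \equiv g_2 \pmod{p^s}$. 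Using the coordinate-free description at the end of Subsection 3.1, for $h \in \mathbf{G}(W(k))$ the expression $\varphi(h^{-1})$ coincides with the ordinary composition $\varphi \circ h^{-1} \circ \varphi^{-1}$, and the defining condition $h^{-1}(\varphi^{-1}(M)) = \varphi^{-1}(M)$ of $\mathbf{G}$ is exactly what ensures that this composition lands in $\mathrm{GL}_M(W(k))$. Thus the orbit condition becomes
\begin{equation}\label{eq:orbitcond}
h g_1 \varphi h^{-1} \varphi^{-1} \equiv g_2 \pmod{p^s} \quad \text{for some } h \in \mathbf{G}(W(k)).
\end{equation}

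Next I would unfold the isomorphism condition by applying Proposition \ref{proposition:homftruncation} to the $F$-crystal $\mathcal{M}(g_1)$ with Frobenius $g_1\varphi$ and modification $g = g_2 g_1^{-1}$ (so that $\mathcal{M}(g_1)(g) = \mathcal{M}(g_2)$). This yields that $F_s(\mathcal{M}(g_1)) \cong F_s(\mathcal{M}(g_2))$ if and only if there exists $h \in \mathrm{GL}_M(W(k))$ with $h^{-1}(g_2g_1^{-1})(g_1\varphi)h(g_1\varphi)^{-1} \equiv 1_M \pmod{p^s}$, which after cancellation simplifies to $h^{-1} g_2 \varphi h \varphi^{-1} \equiv g_1 \pmod{p^s}$. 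Rearranging and renaming $h \mapsto h^{-1}$, this is equivalent to
\begin{equation}\label{eq:isocond}
h g_1 \varphi h^{-1} \varphi^{-1} \equiv g_2 \pmod{p^s} \quad \text{for some } h \in \mathrm{GL}_M(W(k)).
\end{equation}

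Finally I would observe that any $h \in \mathrm{GL}_M(W(k))$ satisfying \eqref{eq:isocond} automatically lies in $\mathbf{G}(W(k))$; this is exactly the argument already used in the proof of Lemma \ref{lemma:F-truncations same isonumber}, since the left-hand side of \eqref{eq:isocond} must lie in $\mathrm{GL}_M(W(k))$ because $g_2$ does, forcing $\varphi h^{-1} \varphi^{-1} \in \mathrm{End}_{W(k)}(M)$, which in turn is equivalent to $h^{-1}(\varphi^{-1}(M)) = \varphi^{-1}(M)$, i.e.\ to $h \in \mathbf{G}(W(k))$. Thus \eqref{eq:orbitcond} and \eqref{eq:isocond} are literally the same condition, establishing the proposition. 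The only real subtlety is matching the intrinsic description of $\varphi(h^{-1})$ used in the action $\mathbb{T}_s$ with the naive conjugation $\varphi h^{-1} \varphi^{-1}$; this is precisely why the group scheme $\mathbf{G}$, rather than $\mathbf{GL}_M$, appears in the definition of $\mathbb{T}_s$, and once this identification is made the two sides of the claimed equivalence line up tautologically.
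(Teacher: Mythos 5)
Your proposal is correct and takes essentially the same route as the paper: both unwind the orbit condition $\mathbb{T}_s(h[s],g_1[s])=g_2[s]$ and the isomorphism condition (via Proposition~\ref{proposition:homftruncation}) to the single congruence $hg_1\varphi h^{-1}\varphi^{-1}\equiv g_2\pmod{p^s}$, and then observe that any $h\in\mathrm{GL}_M(W(k))$ satisfying it automatically preserves $\varphi^{-1}(M)$ and hence lies in $\mathbf{G}(W(k))$. (A small slip: the passage from $h^{-1}g_2\varphi h\varphi^{-1}\equiv g_1$ to $hg_1\varphi h^{-1}\varphi^{-1}\equiv g_2$ is pure rearrangement -- no renaming $h\mapsto h^{-1}$ is needed.)
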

\begin{proof}
We know that $g_1[s]$ and $g_2[s]$ belong to the same orbit of the action $\mathbb{T}_s$ if and only if there exists $h[s] \in {\bf G}_s(k)$ such that $\mathbb{T}_s(h[s],g_1[s]) = (hg_1\varphi h^{-1}\varphi^{-1})[s] = g_2[s]$. This implies that $h[s]$ is an isomorphism from $F_s(\mathcal{M}( g_1))$ to $F_s(\mathcal{M}(g_2))$.

If $h[s]$ is an isomorphism from $F_s(\mathcal{M}(g_1))$ to $F_s(\mathcal{M}(g_2))$, then $hg_1\varphi h^{-1} \varphi^{-1} \equiv g_2$ modulo $p^s$. To conclude the proof, it is enough to show that $h \in \mathbf{G}_s(k)$, but this is clear from the facts that $h(M) = M$ and $h(\varphi^{-1}(M)) \subset \varphi^{-1}(M)$.
\end{proof}

\begin{corollary}
The set of orbits of the action $\mathbb{T}_s$ is in natural bijection to the set of isomorphism classes of $F$-truncations modulo $p^s$ of $\mathcal{M}(g)$ for all $g \in \mathbf{GL}_M(W(k))$.
\end{corollary}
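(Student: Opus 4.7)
The plan is to use Proposition \ref{proposition:descriptionoforbits} as the essential input. I would define the natural map $\Phi$ from the set of orbits of $\mathbb{T}_s$ on $\mathbf{D}_s(k)$ to the set of isomorphism classes of $F$-truncations modulo $p^s$ of $\mathcal{M}(g)$ as $g$ varies in $\mathbf{GL}_M(W(k))$ as follows: given an orbit $\mathcal{O}$, pick any element $g[s] \in \mathcal{O}$ (where $g \in \mathbf{GL}_M(W(k))$ is an arbitrary lift under the surjection $\mathbf{GL}_M(W(k)) \twoheadrightarrow \mathbf{D}_s(k) = \mathbf{GL}_M(W_s(k))$) and send $\mathcal{O}$ to the isomorphism class of $F_s(\mathcal{M}(g))$.

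The first step is well-definedness: if $g_1[s]$ and $g_2[s]$ lie in the same orbit, then by Proposition \ref{proposition:descriptionoforbits} the $F$-truncations $F_s(\mathcal{M}(g_1))$ and $F_s(\mathcal{M}(g_2))$ are isomorphic, so $\Phi(\mathcal{O})$ does not depend on the representative chosen. I should also note that the choice of lift $g$ of $g[s]$ does not affect $F_s(\mathcal{M}(g))$: different lifts differ by an element congruent to $1_M$ modulo $p^s$, and by Proposition \ref{proposition:homftruncation} (applied with $h = 1_M$) such a modification produces an $F$-truncation isomorphic to $F_s(\mathcal{M}(g))$ via the reduction of $1_M$.

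Next, injectivity of $\Phi$: if $\Phi(\mathcal{O}_1) = \Phi(\mathcal{O}_2)$, choose representatives $g_1[s] \in \mathcal{O}_1$ and $g_2[s] \in \mathcal{O}_2$. Then $F_s(\mathcal{M}(g_1))$ is isomorphic to $F_s(\mathcal{M}(g_2))$, and the reverse direction of Proposition \ref{proposition:descriptionoforbits} places $g_1[s]$ and $g_2[s]$ in the same orbit, so $\mathcal{O}_1 = \mathcal{O}_2$.

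Finally, surjectivity is essentially tautological: every isomorphism class in the target is by definition of the form $[F_s(\mathcal{M}(g))]$ for some $g \in \mathbf{GL}_M(W(k))$, and this class is the image of the orbit of $g[s] \in \mathbf{D}_s(k)$. There is no real obstacle here; the entire content of the corollary is already contained in Proposition \ref{proposition:descriptionoforbits}, and the only thing to verify carefully is the independence of $\Phi$ from the choices of orbit representative and lift, both of which reduce to applications of results established earlier in this section.
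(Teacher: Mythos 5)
Your proof is correct and is the natural unwinding of Proposition \ref{proposition:descriptionoforbits}; the paper gives no explicit argument for this corollary precisely because it is an immediate consequence of that proposition. One small simplification: the lift-independence you verify via Proposition \ref{proposition:homftruncation} already falls out of Proposition \ref{proposition:descriptionoforbits} itself, since two lifts $g$ and $g'$ of the same $g[s]$ satisfy $g[s] = g'[s]$, hence are trivially in the same orbit, and the proposition then gives $F_s(\mathcal{M}(g)) \cong F_s(\mathcal{M}(g'))$ directly.
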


Let $\mathbf{S}_s$ be the fibre product defined by the following commutative diagram:
\[\xymatrix{
\mathbf{S}_s \ar[r] \ar[d] & \mathrm{Spec}\,k \ar[d]^{1_M[s]} \\
\mathbf{G}_s  \ar[r]^{\Psi} & \mathbf{D}_s
}\]
It is the stabilizer of $1_M[s]$ and is a subgroup scheme of $\mathbf{G}_s$. We denote by $\mathbf{C}_s$ the reduced scheme $(\mathbf{S}_s)_{\mathrm{red}}$, and $\mathbf{C}^0_s$ the identity component of $\mathbf{C}_s$. Clearly, 
\begin{equation} \label{equation:gammases}
\mathrm{dim}(\mathbf{S}_s) = \mathrm{dim}(\mathbf{C}_s) = \mathrm{dim}(\mathbf{C}^0_s) = \mathrm{dim}(\mathbf{G}_s) - \mathrm{dim}(\mathbf{O}_s).
\end{equation}

\begin{example} \label{example:stabilizerunipotent}
In this example, we follow the ideas of \cite[Section 2.3]{Vasiu:levelm} to discuss $\mathbb{T}_1(k)$. As a result, we will see that $\mathbf{C}_1^0$ is a unipotent group scheme over $k$. Let $(M, \varphi)$ be an $F$-crystal over $k$ such that $e_1 = 0$. By \cite[Section 1.8]{Vasiu:modp} or \cite[Theorem 1.1]{Viehmann:truncation1}, there exist an element $g \in \mathbf{GL}_M(W(k))$ with the property that $g \equiv 1_M$ modulo $p$, an $F$-basis $\mathcal{B} = \{v_1, v_2, \dots, v_r\}$ of $\mathcal{M}$, and a permutation $\pi$ on the set $I = \{1, 2, \dots, r\}$ that defines a $\sigma$-linear monomorphism $\varphi_{\pi} : M \to M$ with the property that $\varphi_{\pi}(v_i)  = p^{e_i} v_{\pi(i)}$ for all $i \in I$, such that $\mathcal{M}$ is isomorphic to $(M, g\varphi_{\pi})$. Let $\mu$ be the cocharacter defined with respect to $\mathcal{B}$ and let $\bar{\sigma}_{\mathcal{M}}$ be the $\sigma$-linear endomorphism of $\mathrm{End}_{W(R)}(M \otimes_{W(k)} W(R))$ defined with respect to $\mu$. Set
\begin{align*}
I_+ &= \{(i,j) \in I \times I \; | \; i \in I_l,\, j \in I_m, \; \textrm{where $m > l$} \}; \\
I_0 &= \{(i,j) \in I \times I  \; | \; i, j \in I_l \; \textrm{for some $l$}\}; \\
I_- &= \{(i,j) \in I \times I \; | \; i \in I_l,\, j \in I_m, \; \textrm{where $m < l$}\}.
\end{align*}
See Subsection \ref{subsection:filtrationfcry} for the definition of $I_l$ and $I_m$. For $1 \leq i, j \leq r$, let $f_{i,j} \in \mathrm{End}(M)$ be such that $f_{i,j} (v_j) = v_i$ and $f_{i,j}(v_l) = 0$ for $l \neq j$. For every $1+ \bar{f}_{i,j} \in \mathbf{GL}_M(k)$, where $\bar{f}_{i,j}$ is $f_{i,j}$ modulo $p$,  $\bar{\sigma}_{\mathcal{M}}(1+\bar{f}_{i,j}) = \varphi_{\pi}(1+\bar{f}_{i,j})\varphi_{\pi^{-1}}  = 1+\bar{f}_{\pi(i),\pi(j)}$.  For every $h = h_+h_0h_- \in {\bf G}(k) = \widetilde{\bf G}^*(k)$, where $h_{\dagger} \in \mathbf{G}_{\dagger} $ for $\dagger \in \{+, 0, -\}$, we know that $h[1] \in \mathbf{S}_1(k) = \mathbf{C}_1(k)$ if and only if
\begin{equation*}
h_+[1]h_0[1] = \bar{\sigma}_{\mathcal{M}}(h_0[1])\bar{\sigma}_{\mathcal{M}}(h_-[1])
\end{equation*}
by \eqref{equation:actionlevel1}. This is equivalent to
\begin{equation} \label{equation:examplet1stabilizer}
(h_+h_0)[1] = \bar{\sigma}_{\mathcal{M}}((h_0h_-)[1])
\end{equation}
Let
\[(h_+h_0)[1] = 1_M[1] + \sum_{(i,j) \in I_+ \cup I_0} x_{i,j}\bar{e}_{i,j}\]
and 
\[(h_0h_-)[1] = 1_M[1] + \sum_{(i,j) \in I_0 \cup I_-} x_{i,j}\bar{e}_{i,j}.\]
Then \eqref{equation:examplet1stabilizer} can be rewritten as 
\begin{equation} \label{equation:example1stabilizerneed}
\sum_{(i,j) \in I_+ \cup I_0} x_{i,j}\bar{e}_{i,j} = \sum_{(i,j) \in I_0 \cup I_-} x^p_{i,j} \bar{e}_{\pi(i),\pi(j)}
\end{equation}
This is equivalent to three types of equalities:
\begin{gather} 
x_{\pi(i),\pi(j)} = x^p_{i,j} \; \qquad \textrm{if} \; (i,j) \in I_- \cup I_0 \; \textrm{and} \; (\pi(i),\pi(j)) \in I_+ \cup I_0,  \label{equation:example1important1} \\
x_{\pi(i),\pi(j)} = 0 \; \qquad \textrm{if} \; (i,j) \in I_+ \; \textrm{and} \; (\pi(i),\pi(j)) \in I_+ \cup I_0, \label{equation:example1important2} \\
x^p_{i,j} = 0 \; \qquad \textrm{if} \; (i,j) \in I_- \cup I_0 \; \textrm{and} \; (\pi(i),\pi(j)) \in I_-. \label{equation:example1important3}
\end{gather}
We decompose the permutation $\pi \times \pi$ on $I \times I$ into a product of disjoint cycles $\prod_u (\pi \times \pi)_u$. To ease language, we say that a pair $(i,j) \in I \times I$ is \emph{in} $(\pi \times \pi)_u$ if $(\pi \times \pi)_u(i,j) \neq (i,j)$. To study the system of equations defined by \eqref{equation:example1important1} to \eqref{equation:example1important3} we consider the following three cases:
\begin{enumerate}
\item Consider $(\pi \times \pi)_u$ such that all $(i,j)$ in $(\pi \times \pi)_u$ are in $I_0$. By \eqref{equation:example1important1}, $x_{i,j} = x^{p^{\mathrm{ord}((\pi \times \pi)_u)}}_{i,j}$. Thus there are finitely many solutions for $x_{i,j}$.
\item Consider $(\pi \times \pi)_u$ such that all $(i,j)$ in $(\pi \times \pi)_u$ are in $I_0 \cup I_+$ and at least one $(i,j)$ is in $I_+$. By \eqref{equation:example1important2}, $x_{i,j} = 0$ for all $(i,j)$ in $(\pi \times \pi)_u$.
\item Consider $(\pi \times \pi)_u$ such that at least one $(i,j)$ in $(\pi \times \pi)_u$ is in $I_-$. Let $\nu_{\pi}(i,j)$ be the smallest positive integer such that 
\[(\pi^{\nu_{\pi}(i,j)}(i), \pi^{\nu_{\pi}(i,j)}(j)) \in I_+ \cup I_-.\] 
By \eqref{equation:example1important1}, $x_{\pi^m(i),\pi^m(j)} = x^{p^m}_{i,j}$ for all $1 \leq m < \nu_{\pi}(i,j)$. 
\begin{itemize}
\item If $(\pi^{\nu_{\pi}(i,j)}(i), \pi^{\nu_{\pi}(i,j)}(j)) \in I_-$, then $x_{\pi^m(i),\pi^m(j)} = 0$ for all $0 \leq m \leq \nu_{\pi}(i,j)$.
\item If $(\pi^{\nu_{\pi}(i,j)}(i), \pi^{\nu_{\pi}(i,j)}(j)) \in I_+$, then $x_{\pi^m(i),\pi^m(j)} = x^{p^m}_{i,j}$ for all $1 \leq m \leq \nu_{\pi}(i,j)$. 
\end{itemize}
Thus $x_{\pi^m(i),\pi^m(j)}$ for all $1 \leq m < \nu_{\pi}(i,j)$ has finitely many solutions.
\begin{itemize}
\item If $(\pi^{\nu_{\pi}(i,j)+1}(i),\pi^{\nu_{\pi}(i,j)+1}(j)) \in I_+ \cup I_0$, then $x_{\pi^{\nu_{\pi}(i,j)+1}(i),\pi^{\nu_{\pi}(i,j)+1}(j)}$ equals $0$ by \eqref{equation:example1important2}. 
\item If $(\pi^{\nu_{\pi}(i,j)+1}(i),\pi^{\nu_{\pi}(i,j)+1}(j)) \in I_-$, then $x_{\pi^{\nu_{\pi}(i,j)+1}(i),\pi^{\nu_{\pi}(i,j)+1}(j)}$ is not related to $x_{i,j}$.
\end{itemize}
\end{enumerate}
 Let $I_-^{\pi}$ be a subset of $I_-$ that contains pairs $(i,j)$ such that $(\pi^{\nu_{\pi}(i,j)}(i), \pi^{\nu_{\pi}(i,j)}(j)) \in I_+$. We conclude that $h[1] \in \mathbf{C}_1^0(k)$ if and only if the following equations hold:
\begin{align*}
h_+[1]h_0[1] &= 1_M[1] + \sum_{(i,j) \in I_-^{\pi}} \sum_{l=1}^{\nu_{\pi}(i,j)} x_{i,j}^{p^l}\bar{e}_{\pi^l(i),\pi^l(j)},\\
h_0[1] &=  1_M[1] + \sum_{(i,j) \in I_-^{\pi}} \sum_{l=1}^{\nu_{\pi}(i,j)-1} x_{i,j}^{p^l}\bar{e}_{\pi^l(i),\pi^l(j)},\\
h_0[1]h_-[1]  &= 1_M[1] + \sum_{(i,j) \in I_-^{\pi}} \sum_{l=0}^{\nu_{\pi}(i,j)-1} x_{i,j}^{p^l}\bar{e}_{\pi^l(i),\pi^l(j)},
\end{align*}
where $x_{i,j} \in I_-^{\pi}$ can take independently all values in $k$ such that $h_0[1] \in \mathbf{G}_1(k)$. This shows that $\mathrm{Lie}(\mathbf{C}^0_1) = \bigoplus_{(i,j) \in I_-^{\pi}} k \bar{e}_{i,j}$, which contains no non-zero semi-simple elements. Thus $\mathbf{C}_1^0$ has no subgroup isomorphic to $\mathbb{G}_m$ and hence it is unipotent. We also get that the dimension of $\mathbf{C}_1^0$ is equal to the cardinality of $I_-^{\pi}$. Therefore the dimension of $\mathbf{O}_1$ is equal to the cardinality of the set $I^2 - I_-^{\pi}$.
\end{example}

\begin{proposition}
For every $s \geq 1$, the connected smooth group scheme $\mathbf{C}^0_s$ is unipotent.
\end{proposition}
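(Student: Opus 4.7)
The proof proceeds by induction on $s \geq 1$. The base case $s=1$ is precisely the calculation carried out in Example \ref{example:stabilizerunipotent}: an explicit description of $\mathrm{Lie}(\mathbf{C}^0_1)$ as $\bigoplus_{(i,j)\in I_-^{\pi}} k\bar{e}_{i,j}$ shows that it contains no nonzero semisimple element, so $\mathbf{C}^0_1$ has no subgroup isomorphic to $\mathbb{G}_m$ and is therefore unipotent. The normalization $e_1=0$ imposed in that example costs no generality: replacing $\varphi$ by $\varphi' := p^{-e_1}\varphi$ (still an $F$-crystal, with Hodge slopes $e_i - e_1$) leaves the group scheme $\mathbf{G}$, the $\sigma$-linear operator $\bar{\sigma}_{\mathcal{M}}$, and the action $\mathbb{T}_s$ unchanged. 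Indeed, $(\varphi')^{-1}(M) = p^{e_1}\varphi^{-1}(M)$, and the factors $p^{-e_1}$ and $p^{e_1}$ appearing in $\mu'(1/p)\,h\,\mu'(p) = p^{-e_1}\mu(1/p)\,h\,p^{e_1}\mu(p)$ cancel, so $\varphi(h) = \varphi'(h)$ for every $h \in \mathbf{G}(W(R))$; consequently $\mathbf{C}^0_s(\mathcal{M}) = \mathbf{C}^0_s(\mathcal{M}')$.

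For the inductive step ($s \geq 2$), the plan is to use the reduction epimorphism $\mathrm{Red}_{s,\mathbf{G}} : \mathbf{G}_s \to \mathbf{G}_{s-1}$ recalled in Subsection \ref{section:wsfunctor}, whose kernel is the commutative unipotent group isomorphic to $\mathbb{G}_a^{r^2}$. A direct inspection of the formula defining $\mathbb{T}_s$ shows that if $h[s] \in \mathbf{S}_s(R)$ then $h[s-1] \in \mathbf{S}_{s-1}(R)$: any lift $h$ satisfies $h\varphi(h^{-1}) \equiv 1_M$ modulo $p^s$, which implies the weaker congruence modulo $p^{s-1}$. Passing to reduced subschemes and then to identity components — using that the reduction sends $1_M[s]$ to $1_M[s-1]$ and that the image of a connected scheme is connected — one obtains a morphism of $k$-group schemes
\[ q : \mathbf{C}^0_s \longrightarrow \mathbf{C}^0_{s-1}. \]

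The scheme-theoretic kernel $\ker(q)$ is a closed subgroup scheme of $\ker(\mathrm{Red}_{s,\mathbf{G}}) \cong \mathbb{G}_a^{r^2}$ and is therefore unipotent. The scheme-theoretic image $q(\mathbf{C}^0_s)$ is a closed subgroup of $\mathbf{C}^0_{s-1}$, which is unipotent by the inductive hypothesis, and hence is itself unipotent. Since the class of unipotent affine group schemes over a field is closed under extensions, $\mathbf{C}^0_s$ is unipotent. The only nontrivial bookkeeping is the reduction to $e_1=0$ in the base case, which is where the bulk of the conceptual content lies; the inductive step itself is purely formal dévissage of the type already used in \cite{Vasiu:dimensions} and \cite{Vasiu:levelm}.
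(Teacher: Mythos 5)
Your proof is correct and follows the same dévissage as the paper: base case via Example \ref{example:stabilizerunipotent}, inductive step using the reduction epimorphism $\mathrm{Red}_{s,\mathbf{G}}$ to write $\mathbf{C}^0_s$ as an extension of a subgroup of the unipotent $\mathbf{C}^0_{s-1}$ by a subgroup of $\ker(\mathrm{Red}_{s,\mathbf{G}}) \cong \mathbb{G}_a^{r^2}$. Your additional remark justifying the reduction to $e_1 = 0$ (via $\varphi' = p^{-e_1}\varphi$, noting that $\mathbf{G}$, $\bar{\sigma}_{\mathcal{M}}$, and $\varphi(\cdot)$ are unchanged) addresses a hypothesis implicit in Example \ref{example:stabilizerunipotent} that the paper's proof leaves unstated, and is a welcome bit of bookkeeping.
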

\begin{proof}
We proceed by induction. The base case $s=1$ is checked in Example \ref{example:stabilizerunipotent}. Suppose $\mathbf{C}^0_{s-1}$ is unipotent. The image of $\mathbf{C}^0_{s}$ under the reduction map $\mathrm{Red}_{s,\mathbf{G}} : \mathbf{G}_s \to \mathbf{G}_{s-1}$ is in $\mathbf{C}^0_{s-1}$, and thus is unipotent. The kernel of $\mathbf{C}^0_s \to \mathbf{C}^0_{s-1}$ is in the kernel of $\mathrm{Red}_{s,\mathbf{G}}$, and thus is unipotent. Therefore $\mathbf{C}^0_s$ is an extension of unipotent group schemes, and thus is unipotent; see \cite[Exp. XVII, Prop 2.2]{SGA3II}.
\end{proof}

We construct a homomorphism $\Lambda_s : \mathbf{C}_s \to \mathbf{A}_s$ as follows. For every $k$-algebra $R$, let $h[s] \in \mathbf{C}_s(R)$. Thus $\varphi(h[s]) = h[s]$. Fix a $\mathbb{Z}_p$-basis $\mathcal{B}_0 = \{v_1, \dots, v_r\}$ of $\mathcal{M}_0$. Let $B_0^* = \{e_{ij}\}$ be the standard $\mathbb{Z}_p$-basis of $\mathrm{End}_{\mathbb{Z}_p}(M_0)$ induced by $B_0$. If $h = \sum_{i,j}e_{ij} \otimes a_{ij} \in \mathrm{End}_{\mathbb{Z}_p}(M_0) \otimes W(R) = \mathrm{End}_{W(R)}(M \otimes_{W(k)} W(R))$, where $a_{ij} \in W(R)$, then $\varphi(h[s]) = h[s]$ is equivalent to 
\begin{equation} \label{equation:hinstabilizer}
\sum_{i,j} \varphi e_{ij} \varphi^{-1} \otimes \sigma_R(a_{ij}) \equiv \sum_{i,j} e_{ij} \otimes a_{ij}  \quad \textrm{modulo}  \; \; \theta^s(W(R)). 
\end{equation}
Let $C = (c_{ij})$ be the matrix representation of $\varphi$ with respect to $\mathcal{B}_0$ and $C^{-1} = (c'_{ij})$ be its inverse. Using the matrix notation, \eqref{equation:hinstabilizer} can be restated as
\begin{equation} \label{equation:hinstablizer2}
(c_{ij})(\sigma_R(a_{ij}))(\sigma_R(c'_{ij})) \equiv (a_{ij}) \quad \textrm{modulo}  \; \;  \theta^s(W(R)). 
\end{equation}
This implies that a lift $h$ of $h[s]$ satisfies the equation that defines $\mathbf{A}_s$. Thus we can define $\Lambda_s(R)(h[s]) = h[s]$.

\begin{lemma} \label{lemma:connectiondimension}
The homomorphism $\Lambda_s(k) : \mathbf{C}_s(k) \to \mathbf{A}_s(k)$ is an isomorphism. Therefore, $\Lambda_s$ is a finite epimorphism and thus $\mathrm{dim}(\mathbf{C}_s)  = \gamma_{\mathcal{M}}(s)$.
\end{lemma}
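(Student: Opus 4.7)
The plan is to verify that $\Lambda_s(k)$ is a bijection of abstract groups and then read off the scheme-theoretic statement via standard properties of algebraic group homomorphisms.

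For surjectivity, take $h[s] \in \mathbf{A}_s(k)$. The remark following Definition \ref{definition:homftruncation} provides a lift $h \in \mathbf{GL}_M(W(k))$ with $\varphi h \varphi^{-1} \in \mathrm{End}_{W(k)}(M)$ and $\varphi h \varphi^{-1} \equiv h$ modulo $p^s$. The first condition is exactly $h(\varphi^{-1}(M)) \subset \varphi^{-1}(M)$. Writing $\varphi h \varphi^{-1} = h + p^s H$ with $H \in \mathrm{End}_{W(k)}(M)$, the element $h + p^s H$ is invertible in $\mathrm{End}(M)$ (via the geometric series in $p^s h^{-1} H$) with inverse congruent to $h^{-1}$ modulo $p^s$; equivalently, $\varphi h^{-1} \varphi^{-1} = (\varphi h \varphi^{-1})^{-1}$ also lies in $\mathrm{End}(M)$, and so $h^{-1}(\varphi^{-1}(M)) \subset \varphi^{-1}(M)$. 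Combining, $h \in \mathbf{G}(W(k))$ and $h[s] \in \mathbf{G}_s(k)$. The congruence $\varphi h \varphi^{-1} \equiv h$ modulo $p^s$ then rearranges as $h \cdot \varphi(h^{-1}) \equiv 1_M$ modulo $p^s$, which is the stabilizer identity $\mathbb{T}_s(h[s], 1_M[s]) = 1_M[s]$, so $h[s] \in \mathbf{S}_s(k) = \mathbf{C}_s(k)$.

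For injectivity, the delicate point is that the canonical map $\mathbf{G}(W_s(k)) \to \mathbf{GL}_M(W_s(k))$ can have non-trivial kernel whenever some $\epsilon_{ij}$ is strictly negative, because the corresponding $\mathbf{G}$-coordinate is scaled by $p^{\epsilon_{ij}}$ compared to its $\mathbf{GL}_M$-image. The stabilizer condition resolves this: if a lift $h \in \mathbf{G}(W(k))$ satisfies $h \equiv 1_M$ modulo $p^s$ in $\mathrm{End}(M)$ and $\varphi(h) \equiv h$ modulo $p^s$, then expanding $\varphi(h) = \sum_{i,j} p^{e_i - e_j} \sigma(a_{ij}) e_{ij}$ in the $\mathbb{Z}_p$-basis and substituting $a_{ij} = p^s c_{ij}$ (valid for $i \neq j$) yields $\sigma(c_{ij}) \equiv p^{e_j - e_i} c_{ij}$ modulo $p^{e_j - e_i}$. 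Using that Frobenius is surjective on $k$, this forces $c_{ij} \in p^{e_j - e_i} W(k)$ when $e_j > e_i$, hence $a_{ij} \in p^{s - \epsilon_{ij}} W(k)$. This is exactly the condition that the $(i,j)$-th $\mathbf{G}$-coordinate of $h$ vanishes modulo $p^s$, so $h[s] = 1_M$ in $\mathbf{G}(W_s(k))$ and $\ker \Lambda_s(k)$ is trivial.

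Finally, $\Lambda_s$ is a homomorphism of finite-type $k$-group schemes inducing a bijection on $k$-points. Since $\mathbf{C}_s$ is reduced and $k$ is algebraically closed, the scheme-theoretic kernel is supported on the identity and hence is a finite group scheme; by standard structure theory for homomorphisms of algebraic groups over a field, $\Lambda_s$ factors as a finite morphism onto its closed image, which equals $\mathbf{A}_s$ by surjectivity on $k$-points and reducedness of $\mathbf{A}_s$. Thus $\Lambda_s$ is a finite epimorphism and $\dim \mathbf{C}_s = \dim \mathbf{A}_s = \gamma_{\mathcal{M}}(s)$. The main obstacle is the injectivity step, where the stabilizer identity must be carefully exploited to upgrade a congruence modulo $p^s$ in $\mathrm{End}(M)$ to the stronger congruence needed at the level of $\mathbf{G}(W_s(k))$; this is where the interaction between the Hodge slopes and the Frobenius twist $\varphi(\cdot)\varphi^{-1}$ is essential.
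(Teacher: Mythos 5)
Your argument is correct, and it makes explicit a point the paper's one-paragraph proof leaves unspoken. The paper simply observes that $\mathbf{C}_s(k)$ and $\mathbf{A}_s(k)$ both ``consist of'' the automorphisms of $F_s(\mathcal{M})$ and that $\Lambda_s(k)$ is ``the identity map,'' and declares the bijection. What this glosses over is that $\mathbf{C}_s(k)$ sits inside $\mathbf{G}(W_s(k))$ while $\mathbf{A}_s(k)$ sits inside $\mathrm{GL}_M(W_s(k))$, and the natural map $\mathbf{G}(W_s(k))\to\mathrm{GL}_M(W_s(k))$ has a nontrivial kernel as soon as $\mathcal{M}$ has two distinct Hodge slopes, since the $\mathbf{G}$-coordinate $p^{\epsilon_{ij}}x_{ij}$ in an above-diagonal block records information modulo $p^{s}$ that the matrix entry $x_{ij}$ loses. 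Your injectivity step is exactly the missing content: for $h\in\mathbf{G}(W(k))$ with $h\equiv 1_M$ modulo $p^{s}$ in $\mathrm{End}_{W(k)}(M)$ and $\varphi h\varphi^{-1}\equiv h$ modulo $p^{s}$ with $\varphi h\varphi^{-1}\in\mathrm{End}_{W(k)}(M)$, writing $a_{ij}=p^{s}c_{ij}$ for $i\neq j$ gives $(i,j)$-entry $p^{e_i-e_j+s}\sigma(c_{ij})$ for $\varphi h\varphi^{-1}$, and the congruence forces $p^{e_i-e_j}\sigma(c_{ij})\in W(k)$, hence $c_{ij}\in p^{e_j-e_i}W(k)$ for $e_j>e_i$ (since $\sigma$ is an automorphism of $W(k)$ preserving $pW(k)$); therefore $p^{\epsilon_{ij}}a_{ij}\in p^{s}W(k)$, which is precisely the vanishing of the $\mathbf{G}$-coordinate modulo $p^{s}$, so $h[s]=1_M$ in $\mathbf{G}(W_s(k))$. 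Your surjectivity step (lifting an element of $\mathbf{A}_s(k)$ to $\mathbf{G}(W(k))$ via the stability of $\varphi^{-1}(M)$ and its inverse) and the passage from a bijection on $k$-points of reduced finite-type $k$-group schemes, $k$ algebraically closed, to a finite epimorphism are also correct and match what the paper would implicitly invoke. So the two proofs share the same identification; yours is the more transparent one and closes exactly the spot where a careful reader would pause.
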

\begin{proof}
The group $\mathbf{C}_s(k)$ consists of all $h \in \mathbf{G}_s(k)$ such that $h \equiv \varphi h \varphi^{-1}$ modulo $p^s$, which are exactly all automorphisms of $F_s(\mathcal{M})$. As $\mathbf{A}_s(k)$ is also the group of automorphisms of $F_s(\mathcal{M})$ and $\Lambda_s(k)$ is the identity map, we know that they are isomorphic.

As $\Lambda_s(k)$ is an isomorphism, $\Lambda_s$ is a finite epimorphism. Therefore $\mathrm{dim}(\mathbf{C}_s) =\mathrm{dim}(\mathbf{C}^0_s) = \mathrm{dim}(\mathbf{A}^0_s) = \mathrm{dim}(\mathbf{A}_s)$, which by definition is $\gamma_{\mathcal{M}}(s)$.
\end{proof}

Let $\mathbf{T}_{s+1}$ be the reduced group of the group subscheme $\mathrm{Red}^{-1}_{s+1, \mathbf{G}}(\mathbf{C}_s)$ of $\mathbf{G}_{s+1}$, and let $\mathbf{T}_{s+1}^0$ be its identity component. We have a short exact sequence 
\begin{equation} \label{equation:sestc}
1 \to \mathrm{Ker}(\mathrm{Red}_{s+1,\mathbf{G}}) \to \mathbf{T}_{s+1}^0 \to \mathbf{C}_s^0 \to 1.
\end{equation} 
Thus $\mathbf{T}^0_{s+1}$ is unipotent as $\mathrm{Ker}(\mathrm{Red}_{s+1,\mathbf{G}})$ and $ \mathbf{C}_s^0$ are. We have the following equality
\begin{equation} \label{equation:dimtrc}
\mathrm{dim}(\mathbf{T}_{s+1}^0) = \mathrm{dim}(\mathrm{Ker}(\mathrm{Red}_{s+1,\mathbf{G}})) + \mathrm{dim}(\mathbf{C}_s^0) = r^2 + \mathrm{dim}(\mathbf{C}_s^0).
\end{equation}
By Lemma \ref{lemma:connectiondimension} and \eqref{equation:dimtrc}, we know that 
\begin{equation}
\mathrm{dim}(\mathbf{T}_{s+1}^0) = r^2 + \gamma_{\mathcal{M}}(s).
\end{equation}
By \eqref{equation:gammases} and the fact that $\mathrm{Red}_{s+1,\mathbf{G}}$ is an epimorphism whose kernel has dimension $r^2$, we know that 
\begin{equation} \label{equation:gammadiff}
\gamma_{\mathcal{M}}(s+1) - \gamma_{\mathcal{M}}(s)  = r^2 - \mathrm{dim}(\mathbf{O}_{s+1}) + \mathrm{dim}(\mathbf{O}_s).
\end{equation}

Let $\mathbf{V}_{s+1}$ be the inverse image of the point $1_M[s] \in \mathbf{D}_s(k)$. It is isomorphic to the kernel of $\mathrm{Red}_{s+1, \mathbf{D}}$ and thus isomorphic to $\mathbb{A}_{k}^{r^2}$. The inverse image of $\mathbf{O}_s$ under $\mathrm{Red}^{-1}_{s+1, \mathbf{D}}$ in $\mathbf{D}_{s+1}$ is a union of orbits and $\mathbf{O}_{s+1}$ is one of them. Let $\mathscr{O}_{s+1, s}$ be the set of orbits of the action $\mathbb{T}_{s+1}$ that is contained in $\mathrm{Red}^{-1}_{s+1,\mathbf{D}}(\mathbf{O_{s}})$. Every orbit in $\mathscr{O}_{s+1,s}$ intersects $\mathbf{V}_{s+1}$ nontrivially. 

We now give another description of $\mathscr{O}_{s+1, s}$ in terms of $F$-truncations modulo $p^s$ of $F$-crystals. Let $\mathscr{I}_{s}$ be the set of all $F$-crystals $\mathcal{M}(g)$ with $g \in \mathbf{GL}_M(W(k))$ up to $F$-truncations modulo $p^{s}$ isomorphisms. In other words, if $F_{s}(\mathcal{M}(g_1))$ is isomorphic to $F_{s}(\mathcal{M}(g_2))$, then we identify $\mathcal{M}(g_1)$ and $\mathcal{M}(g_2)$ in $\mathscr{I}_s$. By Proposition \ref{proposition:descriptionoforbits}, we know that there is a bijection between the set of orbits of $\mathbb{T}_s$ and $\mathscr{I}_{s}$.

\begin{proposition} \label{proposition:bijectionbetweenlifting}
There is a bijection between $\mathscr{O}_{s+1,s}$ and the subset of $\mathscr{I}_{s+1}$ that contains all $\mathcal{M}(g)$ (up to $F$-truncation modulo $p^{s+1}$ isomorphism) such that $F_s(\mathcal{M}(g))$ is isomorphic to $F_s(\mathcal{M})$. Therefore, $\mathscr{O}_{s+1,s}$ has only one orbit if $s \geq n_{\mathcal{M}}$.
\end{proposition}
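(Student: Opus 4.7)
The plan is to reduce the proposition to two facts already established: the parametrization of orbits by $F$-truncation classes (Proposition \ref{proposition:descriptionoforbits}) and the characterization of the isomorphism number via $F$-truncations (Corollary \ref{corollary:correctgeneralization}).

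First, I would set up the bijection. By Proposition \ref{proposition:descriptionoforbits}, sending the orbit of $g[s+1] \in \mathbf{D}_{s+1}(k)$ to the $F$-truncation modulo $p^{s+1}$ isomorphism class of $\mathcal{M}(g)$ defines a bijection between the set of orbits of $\mathbb{T}_{s+1}$ and $\mathscr{I}_{s+1}$. It remains only to match the two distinguished subsets. The reduction epimorphism $\mathrm{Red}_{s+1,\mathbf{D}} : \mathbf{D}_{s+1} \to \mathbf{D}_s$ is equivariant with respect to $\mathrm{Red}_{s+1,\mathbf{G}} : \mathbf{G}_{s+1} \to \mathbf{G}_s$ (this is immediate from the defining formula $\mathbb{T}_s(h[s],g[s]) = (hg\varphi(h^{-1}))[s]$ and the fact that both reductions are induced by $W_{s+1}(R) \to W_s(R)$). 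Consequently it sends orbits of $\mathbb{T}_{s+1}$ into orbits of $\mathbb{T}_s$, and since $\mathrm{Red}_{s+1,\mathbf{G}}(k)$ is surjective, an orbit of $\mathbb{T}_{s+1}$ lies in $\mathrm{Red}^{-1}_{s+1,\mathbf{D}}(\mathbf{O}_s)$ if and only if its image is the orbit $\mathbf{O}_s$ of $1_M[s]$. By Proposition \ref{proposition:descriptionoforbits} applied at level $s$, the latter condition says exactly that $F_s(\mathcal{M}(g))$ is isomorphic to $F_s(\mathcal{M})$. This gives the claimed bijection between $\mathscr{O}_{s+1,s}$ and the specified subset of $\mathscr{I}_{s+1}$.

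For the final assertion, I would invoke Corollary \ref{corollary:correctgeneralization}, which gives $n_{\mathcal{M}} = t_{\mathcal{M}}$. If $s \geq n_{\mathcal{M}}$ and $g \in \mathbf{GL}_M(W(k))$ satisfies $F_s(\mathcal{M}(g)) \cong F_s(\mathcal{M})$, then by definition of $t_{\mathcal{M}}$ we have $\mathcal{M}(g) \cong \mathcal{M}$ as $F$-crystals, hence $F_{s+1}(\mathcal{M}(g)) \cong F_{s+1}(\mathcal{M})$. Thus the relevant subset of $\mathscr{I}_{s+1}$ collapses to the single class of $\mathcal{M}$, and by the bijection just established, $\mathscr{O}_{s+1,s}$ consists of a single orbit (necessarily the orbit $\mathbf{O}_{s+1}$ of $1_M[s+1]$).

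I do not anticipate a genuine obstacle: the content is essentially bookkeeping on top of Proposition \ref{proposition:descriptionoforbits} and Corollary \ref{corollary:correctgeneralization}. The only point requiring a small verification is the equivariance of $\mathrm{Red}_{s+1,\mathbf{D}}$ with respect to $\mathrm{Red}_{s+1,\mathbf{G}}$, which follows from naturality of the construction of $\mathbb{T}_s$ in the Witt truncation $W_s(R)$; one checks it from the explicit product expansion of $hg\varphi(h^{-1})$ exhibited in Subsection \ref{subsection:groupaction}.
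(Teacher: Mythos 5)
Your proof is correct and follows essentially the same route as the paper's: apply Proposition \ref{proposition:descriptionoforbits} at both levels $s$ and $s+1$, use the equivariance of $\mathrm{Red}_{s+1,\mathbf{D}}$ with respect to $\mathrm{Red}_{s+1,\mathbf{G}}$ to identify which orbits of $\mathbb{T}_{s+1}$ lie over $\mathbf{O}_s$, and then invoke Corollary \ref{corollary:correctgeneralization} for the final assertion. You are slightly more explicit about the equivariance step, which the paper simply asserts as a "fact"; this is a welcome clarification rather than a deviation.
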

\begin{proof}
The first part of the proposition follows from the following fact: for every $g \in \mathbf{GL}_M(W(k)$, an orbit of $\mathbb{T}_{s+1}$ that contains the $F$-truncation modulo $p^{s+1}$ of the $F$-crystal $\mathcal{M}(g)$ is in $\mathrm{Red}^{-1}_{s+1,\mathbf{D}}(\mathbf{O_{s}})$ if and only if $F_s(\mathcal{M}(g))$ is isomorphic to $F_s(\mathcal{M})$.

If $s \geq n_{\mathcal{M}}$, let $\mathcal{M}(g)$ be an $F$-crystal such that $F_s(\mathcal{M}(g))$ is isomorphic to $F_s(\mathcal{M})$. By Corollary \ref{corollary:correctgeneralization}, $\mathcal{M}(g)$ is isomorphic to $\mathcal{M}$. Thus $\mathscr{O}_{s+1,s}$ contains only one element by the first part of the proposition.
\end{proof}

\subsection{Monotonicity of $\gamma_{\mathcal{M}}(i)$} \label{subsection:monotonicityofgamma}

\begin{lemma} \label{lemma:keylemmastrictseq}
The following two statements are equivalent:
\begin{enumerate}[(i)]
\item $\mathrm{dim}(\mathbf{O}_{s+1}) = \mathrm{dim}(\mathbf{O}_s) + r^2$;
\item $\mathbf{V}_{s+1} \subset \mathbf{O}_{s+1}$.
\end{enumerate}
\end{lemma}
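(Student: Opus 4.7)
The plan is to analyze the reduction morphism $\rho := \mathrm{Red}_{s+1,\mathbf{D}}|_{\mathbf{O}_{s+1}}\colon \mathbf{O}_{s+1}\to\mathbf{O}_s$, to identify its fiber over $1_M[s]$ with the $\mathbf{T}_{s+1}$-orbit of $1_M[s+1]$ inside $\mathbf{V}_{s+1}$, and then to exploit the unipotence of $\mathbf{T}_{s+1}^0$ established in the discussion preceding the lemma.

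First, I would verify that $\rho$ is surjective and $\mathbf{G}_{s+1}$-equivariant along $\mathrm{Red}_{s+1,\mathbf{G}}$: for any $h\in\mathbf{G}_{s+1}$ one has $\rho(h\cdot 1_M[s+1]) = h[s]\cdot 1_M[s]$, so the image is $\mathbf{G}_s\cdot 1_M[s]=\mathbf{O}_s$. Equivariance forces all geometric fibers of $\rho$ to be translates of one another under $\mathbf{G}_{s+1}$ and thus to share a common dimension, so
\[
\dim(\mathbf{O}_{s+1}\cap\mathbf{V}_{s+1}) = \dim(\mathbf{O}_{s+1}) - \dim(\mathbf{O}_s).
\]
A direct unwinding then identifies $\mathbf{O}_{s+1}\cap\mathbf{V}_{s+1}$ with the $\mathbf{T}_{s+1}$-orbit of $1_M[s+1]$: a point $y = h\cdot 1_M[s+1]$ of $\mathbf{O}_{s+1}$ lies in $\mathbf{V}_{s+1}$ iff $h[s]\cdot 1_M[s] = 1_M[s]$, iff $h[s]\in\mathbf{S}_s(k)$, iff $h\in\mathbf{T}_{s+1}(k)$.

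The implication (ii) $\Rightarrow$ (i) is now immediate: if $\mathbf{V}_{s+1}\subset\mathbf{O}_{s+1}$, then the fiber equals $\mathbf{V}_{s+1}\cong\mathbb{A}_k^{r^2}$, which has dimension $r^2$, giving $\dim(\mathbf{O}_{s+1})=\dim(\mathbf{O}_s)+r^2$. For the converse, assume (i) holds, so the $\mathbf{T}_{s+1}$-orbit of $1_M[s+1]$ in $\mathbf{V}_{s+1}$ has dimension $r^2$; since $\mathbf{T}_{s+1}^0$ has the same dimension as $\mathbf{T}_{s+1}$, the $\mathbf{T}_{s+1}^0$-orbit of $1_M[s+1]$ also has dimension $r^2$. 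The preceding paragraph of the paper records that $\mathbf{T}_{s+1}^0$ is unipotent. Invoking the classical fact that every orbit of a unipotent algebraic group acting on an affine variety is closed, this orbit is a closed irreducible subscheme of the irreducible $r^2$-dimensional scheme $\mathbf{V}_{s+1}\cong\mathbb{A}_k^{r^2}$ of full dimension $r^2$, hence equals $\mathbf{V}_{s+1}$. Therefore $\mathbf{V}_{s+1}\subset\mathbf{O}_{s+1}$.

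The delicate point is in the $(i)\Rightarrow(ii)$ direction: a priori, an orbit of maximal dimension inside an irreducible affine variety is only open and dense, not necessarily equal to the ambient scheme. The gap is closed precisely by the fact that orbits of unipotent group actions on affine varieties are closed, which is the reason the paper takes the trouble to prove the unipotence of $\mathbf{T}_{s+1}^0$ just before stating this lemma.
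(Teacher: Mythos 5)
Your proof is correct and follows essentially the same route as the paper's: identify the fiber of the reduction map $\mathbf{O}_{s+1}\to\mathbf{O}_s$ over $1_M[s]$ with $\mathbf{O}_{s+1}\cap\mathbf{V}_{s+1}$, recognize it as the $\mathbf{T}_{s+1}$-orbit of $1_M[s+1]$ inside $\mathbf{V}_{s+1}\cong\mathbb{A}_k^{r^2}$, and invoke the closedness of orbits of the unipotent group $\mathbf{T}_{s+1}^0$ to upgrade ``open and dense of dimension $r^2$'' to ``equals $\mathbf{V}_{s+1}$''. The only cosmetic differences are that the paper cites faithful flatness of $\mathrm{Red}_{s+1,\mathbf{D}}|_{\mathbf{O}_{s+1}}$ for equidimensionality of fibers (you use equivariance) and phrases the final step by first closing $\mathbb{T}^0_{s+1}$-orbits and then $\mathbb{T}_{s+1}$-orbits as finite unions thereof.
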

\begin{proof}
As $\mathrm{Red}_{s+1, \mathbf{D}}: \mathbf{O}_{s+1} \to \mathbf{O}_s$ is faithfully flat, the fibers of this morphism are equidimensional. Hence we have
\begin{equation} \label{equation:dimensionorbits}
\mathrm{dim}(\mathbf{O}_{s+1}) = \mathrm{dim}(\mathbf{O}_s) + \mathrm{dim}(\mathbf{V}_{s+1} \cap \mathbf{O}_{s+1}).
\end{equation}

If (i) holds, as $\mathrm{dim}(\mathbf{V}_{s+1} \cap \mathbf{O}_{s+1}) = \mathrm{dim}(\mathbf{O}_{s+1}) - \mathrm{dim}(\mathbf{O}_s) = r^2 = \mathrm{dim}(\mathbf{V}_{s+1})$, $\mathbf{V}_{s+1} \cap \mathbf{O}_{s+1}$ is open in $\mathbf{V}_{s+1}$. 

Consider the action $\mathbb{T}^0_{s+1} : \mathbf{T}^0_{s+1} \times_k \mathbf{V}_{s+1} \to \mathbf{V}_{s+1}$. By \cite[Proposition 2.4.14]{Springer:linearalgebraicgroups}, we know that all the orbits of $\mathbb{T}^0_{s+1}$ are closed. As the orbits of the action $\mathbb{T}_{s+1} : \mathbf{T}_{s+1} \times_k \mathbf{V}_{s+1} \to \mathbf{V}_{s+1}$ is a finite union of the orbits of the action $\mathbb{T}^0_{s+1}$, we know that  the orbits of the action $\mathbb{T}_{s+1}$ is also closed. The orbit of $1_M[s+1]$ under the action of $\mathbb{T}_{s+1}$ is $(\mathbf{V}_{s+1} \cap \mathbf{O}_{s+1})_{\mathrm{red}}$. Because it is  an open, closed and dense orbit of $\mathbb{T}_{s+1}$, we know that $\mathbf{V}_{s+1} \cap \mathbf{O}_{s+1} = \mathbf{V}_{s+1}$. Hence $\mathbf{V}_{s+1} \subset \mathbf{O}_{s+1}$.

If (ii) holds, as $\mathrm{dim}(\mathbf{V}_{s+1} \cap \mathbf{O}_{s+1}) = \mathrm{dim}(\mathbf{V}_{s+1}) = r^2$, (i) follows from \eqref{equation:dimensionorbits}.
\end{proof}

\begin{corollary} \label{corollary:keycorollarystrictseq}
$\gamma_{\mathcal{M}}(s+1) = \gamma_{\mathcal{M}}(s)$ if and only if $\mathscr{O}_{s+1,s}$ has only one element.
\end{corollary}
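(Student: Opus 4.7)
The plan is to combine the dimension identity \eqref{equation:gammadiff} with Lemma \ref{lemma:keylemmastrictseq}. From \eqref{equation:gammadiff} one has
\[\gamma_{\mathcal{M}}(s+1) - \gamma_{\mathcal{M}}(s) = r^2 - \mathrm{dim}(\mathbf{O}_{s+1}) + \mathrm{dim}(\mathbf{O}_s),\]
so $\gamma_{\mathcal{M}}(s+1) = \gamma_{\mathcal{M}}(s)$ holds if and only if $\mathrm{dim}(\mathbf{O}_{s+1}) = \mathrm{dim}(\mathbf{O}_s) + r^2$, and by Lemma \ref{lemma:keylemmastrictseq} this is in turn equivalent to the inclusion $\mathbf{V}_{s+1} \subset \mathbf{O}_{s+1}$. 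Thus the corollary reduces to proving the equivalence: $\mathbf{V}_{s+1} \subset \mathbf{O}_{s+1}$ if and only if $\mathscr{O}_{s+1,s}$ consists of a single orbit. I would verify this by routine set-theoretic considerations exploiting the equivariance of the reduction morphism $\mathrm{Red}_{s+1,\mathbf{D}}$.

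For the forward direction, I would assume $\mathbf{V}_{s+1} \subset \mathbf{O}_{s+1}$ and recall from the paragraph preceding the corollary that every $\mathbf{O}' \in \mathscr{O}_{s+1,s}$ intersects $\mathbf{V}_{s+1}$ nontrivially. Consequently $\mathbf{O}' \cap \mathbf{O}_{s+1} \supseteq \mathbf{O}' \cap \mathbf{V}_{s+1} \neq \emptyset$, and since distinct $\mathbb{T}_{s+1}$-orbits are disjoint, $\mathbf{O}' = \mathbf{O}_{s+1}$; note that $\mathbf{O}_{s+1}$ itself belongs to $\mathscr{O}_{s+1,s}$ because $1_M[s+1]$ reduces to $1_M[s] \in \mathbf{O}_s$. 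For the reverse direction, assuming $\mathscr{O}_{s+1,s} = \{\mathbf{O}_{s+1}\}$, I would pick any $\mathbb{T}_{s+1}$-orbit $\mathbf{O}''$ meeting $\mathbf{V}_{s+1}$ and argue that $\mathbf{O}'' \subset \mathrm{Red}_{s+1,\mathbf{D}}^{-1}(\mathbf{O}_s)$: indeed, the reduction morphism is equivariant for the $\mathbb{T}_{s+1}$- and $\mathbb{T}_s$-actions, so it sends $\mathbf{O}''$ into the $\mathbb{T}_s$-orbit of $\mathrm{Red}_{s+1,\mathbf{D}}(x)$ for any $x \in \mathbf{O}'' \cap \mathbf{V}_{s+1}$, and this equals $\mathbf{O}_s$ since $\mathrm{Red}_{s+1,\mathbf{D}}(x) = 1_M[s]$. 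Hence $\mathbf{O}'' \in \mathscr{O}_{s+1,s}$, so by hypothesis $\mathbf{O}'' = \mathbf{O}_{s+1}$, and therefore $\mathbf{V}_{s+1} \subset \mathbf{O}_{s+1}$.

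The argument is essentially bookkeeping with orbits and fibres; the substantive inputs are Lemma \ref{lemma:keylemmastrictseq} and the dimension formula \eqref{equation:gammadiff}, both of which are already in hand. I do not expect a genuine obstacle here; the only point that requires minor care is the $\mathbb{T}$-equivariance of $\mathrm{Red}_{s+1,\mathbf{D}}$, which is evident from the construction of $\mathbb{T}_s$ at the level of $R$-valued points in Subsection \ref{subsection:groupaction} since the reduction commutes with products and with $\varphi$.
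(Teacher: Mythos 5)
Your proof is correct and follows essentially the same route as the paper: combine the dimension identity \eqref{equation:gammadiff} with Lemma \ref{lemma:keylemmastrictseq}, then bridge $\mathbf{V}_{s+1}\subset\mathbf{O}_{s+1}$ to "$\mathscr{O}_{s+1,s}$ has one element." The paper's proof simply asserts that second equivalence without the orbit-chasing details you supply (via the $\mathbb{T}$-equivariance of $\mathrm{Red}_{s+1,\mathbf{D}}$ and the fact from the preceding paragraph that every orbit in $\mathscr{O}_{s+1,s}$ meets $\mathbf{V}_{s+1}$), but your argument is exactly what makes that assertion rigorous, so there is no genuine difference in approach.
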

\begin{proof}
The first part of the Lemma \ref{lemma:keylemmastrictseq} is equivalent to $\gamma_{\mathcal{M}}(s+1) = \gamma_{\mathcal{M}}(s)$ and the second part of the Lemma \ref{lemma:keylemmastrictseq} is equivalent to $\mathscr{O}_{s+1,s}$ has only one element.
\end{proof}

\begin{theorem} \label{theorem:monotonicitygamma}
For every $F$-crystal $\mathcal{M}$, we have
\[0 \leq \gamma_{\mathcal{M}}(1) < \gamma_{\mathcal{M}}(2) < \cdots < \gamma_{\mathcal{M}}(n_{\mathcal{M}}) = \gamma_{\mathcal{M}}(n_{\mathcal{M}}+1) = \gamma_{\mathcal{M}}(n_{\mathcal{M}}+2) = \cdots\]
\end{theorem}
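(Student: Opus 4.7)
My plan is to handle the two halves of the statement separately: the stabilization $\gamma_{\mathcal{M}}(s+1) = \gamma_{\mathcal{M}}(s)$ for $s \geq n_{\mathcal{M}}$, and the strict increase $\gamma_{\mathcal{M}}(s+1) > \gamma_{\mathcal{M}}(s)$ for $s < n_{\mathcal{M}}$. The stabilization half is immediate: for $s \geq n_{\mathcal{M}}$, Proposition \ref{proposition:bijectionbetweenlifting} shows $\mathscr{O}_{s+1,s}$ is a singleton, so Corollary \ref{corollary:keycorollarystrictseq} yields the equality $\gamma_{\mathcal{M}}(s+1) = \gamma_{\mathcal{M}}(s)$. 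The strict increase half is the substantive part.

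For the strict increase, the crucial structural input is Proposition \ref{proposition:nonincreasingdelta} taken with $l = 1$: the first differences $(\gamma_{\mathcal{M}}(s+1) - \gamma_{\mathcal{M}}(s))_{s \geq 1}$ form a non-increasing sequence of non-negative integers. Consequently the set of indices $s$ at which $\gamma_{\mathcal{M}}$ is locally constant is upward closed, so it suffices to derive a contradiction from the assumption that $\gamma_{\mathcal{M}}(s_0+1) = \gamma_{\mathcal{M}}(s_0)$ for some $s_0 < n_{\mathcal{M}}$. Under this assumption, Corollary \ref{corollary:keycorollarystrictseq} implies that $\mathscr{O}_{s'+1, s'}$ is a singleton for every $s' \geq s_0$.

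The contradiction then comes from the characterization $n_{\mathcal{M}} = t_{\mathcal{M}}$ established in Corollary \ref{corollary:correctgeneralization}: since $s_0 < n_{\mathcal{M}} = t_{\mathcal{M}}$, there must exist $g \in \mathrm{GL}_M(W(k))$ with $F_{s_0}(\mathcal{M}) \cong F_{s_0}(\mathcal{M}(g))$ but $\mathcal{M} \not\cong \mathcal{M}(g)$. I then propagate the isomorphism upward by induction on $s' \geq s_0$: granting $F_{s'}(\mathcal{M}) \cong F_{s'}(\mathcal{M}(g))$, the bijection in Proposition \ref{proposition:bijectionbetweenlifting} places the classes of both $F_{s'+1}(\mathcal{M})$ and $F_{s'+1}(\mathcal{M}(g))$ inside the singleton $\mathscr{O}_{s'+1, s'}$, forcing them to coincide as $F$-truncations modulo $p^{s'+1}$. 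Taking $s' = n_{\mathcal{M}}$ yields $F_{n_{\mathcal{M}}}(\mathcal{M}) \cong F_{n_{\mathcal{M}}}(\mathcal{M}(g))$, whence Corollary \ref{corollary:correctgeneralization} forces $\mathcal{M} \cong \mathcal{M}(g)$, contradicting the choice of $g$. I expect the only potential obstacle to be bookkeeping in the inductive propagation, but once the singleton property of $\mathscr{O}_{s'+1, s'}$ is in hand this step is purely formal via Proposition \ref{proposition:bijectionbetweenlifting}.
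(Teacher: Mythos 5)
Your proof is correct and follows essentially the same route as the paper: Proposition \ref{proposition:nonincreasingdelta} to reduce to an upward-closed set of stabilization indices, Corollary \ref{corollary:keycorollarystrictseq} to translate into $\mathscr{O}_{s+1,s}$ being a singleton, Proposition \ref{proposition:bijectionbetweenlifting} to promote truncation isomorphisms, and Corollary \ref{corollary:correctgeneralization} to close the contradiction. The only cosmetic difference is that the paper uses the monotonicity once to jump directly to the single step $n_{\mathcal{M}}-1 \to n_{\mathcal{M}}$ and argues for arbitrary $g$, whereas you fix one witness $g$ at level $s_0$ and propagate its truncation-isomorphism upward by induction — logically equivalent, just a slightly longer bookkeeping path.
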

\begin{proof}
We first show that for every $1 \leq i \leq n_{\mathcal{M}}-1$, $\gamma_{\mathcal{M}}(i) \neq \gamma_{\mathcal{M}}(i+1)$. Suppose the contrary, then by Proposition \ref{proposition:nonincreasingdelta}, $\gamma_{\mathcal{M}}(i) = \gamma_{\mathcal{M}}(j)$ for all $j \geq i$. In particular, we have $\gamma_{\mathcal{M}}(n_{\mathcal{M}}) = \gamma_{\mathcal{M}}(n_{\mathcal{M}}-1)$. By Corollary \ref{corollary:keycorollarystrictseq}, $\mathscr{O}_{n_{\mathcal{M}}, n_{\mathcal{M}}-1}$ contains one element. Let $\mathcal{M}(g)$ be an $F$-crystal such that $F_{n_{\mathcal{M}}-1}(\mathcal{M}(g))$ is isomorphic to $F_{n_{\mathcal{M}}-1}(\mathcal{M})$, by Proposition \ref{proposition:bijectionbetweenlifting}, there is a unique $\mathcal{M}(g)$ up to $F$-truncation modulo $p^{n_{\mathcal{M}}}$ such that $F_{n_{\mathcal{M}}-1}(\mathcal{M}(g))$ is isomorphic to $F_{n_{\mathcal{M}}-1}(\mathcal{M})$, thus $F_{n_{\mathcal{M}}}(\mathcal{M}(g))$ is isomorphic to $F_{n_{\mathcal{M}}}(\mathcal{M})$. By Lemma \ref{lemma:F-truncations same isonumber}, $\mathcal{M}(g)$ is isomorphic to $\mathcal{M}$. Hence we conclude that $n_{\mathcal{M}}-1$ is the isomorphism number of $\mathcal{M}$, which is a contradiction.

If $s \geq n_{\mathcal{M}}$, then every $F$-crystal $\mathcal{M}(g)$ such that $F_s(\mathcal{M}(g))$ is isomorphic to $F_s(\mathcal{M})$, is isomorphic to $\mathcal{M}$. Therefore $F_{s+1}(\mathcal{M}(g))$ is isomorphic to $F_{s+1}(\mathcal{M})$, whence $\mathscr{O}_{s+1,s}$ has only one element. By Corollary \ref{corollary:keycorollarystrictseq}, $\gamma_{\mathcal{M}}(s+1) = \gamma_{\mathcal{M}}(s)$ for all $s \geq n_{\mathcal{M}}$.
\end{proof}

We have a converse of Proposition \ref{proposition:ordinaryimpliesdimensionzero}.
\begin{proposition} \label{proposition:dimensionzeroimpliesordinary}
If there exists an $s \geq 1$ such that $\gamma_{\mathcal{M}}(s) = 0$, then $\mathcal{M}$ is ordinary.
\end{proposition}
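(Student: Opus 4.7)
Plan. By Proposition~\ref{proposition:nonincreasingdelta} the sequence $(\gamma_{\mathcal{M}}(s))_{s\ge 1}$ is non-decreasing, so the hypothesis $\gamma_{\mathcal{M}}(s_0)=0$ forces $\gamma_{\mathcal{M}}(1)=0$. Replacing $\mathcal{M}$ by the twist $(M,p^{-e_1}\varphi)$, which shifts both Hodge and Newton polygons by $-e_1$ (hence preserves ordinariness) and, via the identity $\varphi'h(\varphi')^{-1}=\varphi h\varphi^{-1}$, preserves every $\mathbf{Aut}_s(\mathcal{M})$ and hence the whole sequence $\gamma_{\mathcal{M}}(s)$, we may assume the hypothesis $e_1=0$ of Example~\ref{example:stabilizerunipotent}. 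That example, combined with Lemma~\ref{lemma:connectiondimension}, furnishes an $F$-basis $\mathcal{B}=\{v_1,\ldots,v_r\}$, an element $g_0\in\mathrm{GL}_M(W(k))$ with $g_0\equiv 1_M\pmod p$, and a permutation $\pi$ of $I=\{1,\ldots,r\}$ satisfying $\varphi_\pi(v_i)=p^{e_i}v_{\pi(i)}$, such that $\mathcal{M}\cong (M,g_0\varphi_\pi)$ and
\[ |I_-^\pi|\;=\;\dim\mathbf{C}_1^0\;=\;\dim\mathbf{A}_1\;=\;\gamma_{\mathcal{M}}(1)\;=\;0. \]

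The combinatorial core is to deduce that $\pi(I_l)=I_l$ for every $l$. Suppose otherwise: some cycle $C=(i_0,i_1,\ldots,i_{k-1})$ of $\pi$ (with $\pi(i_a)=i_{a+1\bmod k}$) contains indices of differing Hodge slopes. The cyclic sequence of differences $d_a:=e_{i_{a+1}}-e_{i_a}$ sums to zero and is not identically zero, hence attains both strictly positive and strictly negative values. In the cyclic list of its nonzero terms the signs cannot all coincide, so there exist indices $s<s'$ with $d_s<0$, $d_{s'}>0$, and $d_a=0$ for every $s<a<s'$. Then $(i_s,i_{s+1})\in I_-$, $\nu_\pi(i_s,i_{s+1})=s'-s$, and $(\pi^{s'-s}(i_s),\pi^{s'-s}(i_{s+1}))=(i_{s'},i_{s'+1})\in I_+$, so $(i_s,i_{s+1})\in I_-^\pi$, contradicting $|I_-^\pi|=0$.

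With $\pi$ preserving each $I_l$ setwise, the $F$-crystal $(M,\varphi_\pi)=\bigoplus_{l=1}^t(\widetilde{F}_\mathcal{B}^l(M),\varphi_\pi|_{\widetilde{F}^l})$ decomposes further (along cycles of each $\pi|_{I_l}$, using $k=\bar k$) as a direct sum of rank-one $F$-crystals of slope $f_l$, so $(M,\varphi_\pi)$ is ordinary. Proposition~\ref{proposition:ordinaryimpliesdimensionzero} then gives $\gamma_{(M,\varphi_\pi)}(s)=0$ for all $s\ge 1$, and Theorem~\ref{theorem:monotonicitygamma} forces $n_{(M,\varphi_\pi)}\le 1$. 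Since $g_0\equiv 1_M\pmod p$, Proposition~\ref{proposition:homftruncation} yields $F_1(\mathcal{M})\cong F_1((M,\varphi_\pi))$, and Corollary~\ref{corollary:correctgeneralization} applied to $(M,\varphi_\pi)$ now gives $\mathcal{M}\cong(M,\varphi_\pi)$; therefore $\mathcal{M}$ is ordinary. I expect the cyclic sign-transition argument in the second paragraph to be the only piece of fresh combinatorics needed; every other step is a direct invocation of results already established in the paper.
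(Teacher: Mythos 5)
Your proof is correct and follows the same overall strategy as the paper --- reduce via monotonicity of $(\gamma_{\mathcal{M}}(s))_s$ to $\gamma_{\mathcal{M}}(1)=0$, translate this through Lemma~\ref{lemma:connectiondimension} and Example~\ref{example:stabilizerunipotent} into $|I_-^\pi|=0$, then deduce that $(M,\varphi_\pi)$ is ordinary, and finally transfer ordinariness back to $\mathcal{M}$ using $g_0\equiv 1_M\bmod p$ --- but the combinatorial core is organized differently and, I would say, more cleanly. The paper first reduces to $\pi$ a single $r$-cycle (needed so that each cycle of $\pi\times\pi$ on $I\times I$ meets $I_+$ if and only if it meets $I_-$, since only then do the slope differences along a $(\pi\times\pi)$-cycle sum to zero) and then argues by recursion on $\nu_\pi$ that the forward orbit of an $I_-$-pair can never reach $I_+$, a contradiction. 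You instead fix any cycle $C=(i_0,\ldots,i_{k-1})$ of $\pi$ with non-constant Hodge slopes, observe that the cyclic sequence $d_a=e_{i_{a+1}}-e_{i_a}$ telescopes to zero, locate a $-\to+$ sign transition among its nonzero terms separated only by zeros, and thereby exhibit a concrete pair $(i_s,i_{s+1})\in I_-^\pi$ in one step. This avoids both the reduction to $\pi$ a single cycle and the paper's recursive argument, at the modest cost of the short sign-transition observation. Two further small differences worth noting: you make explicit the reduction to $e_1=0$ (which is required to invoke Example~\ref{example:stabilizerunipotent} and which the paper leaves implicit), and you derive the bound $n_{(M,\varphi_\pi)}\le 1$ from Proposition~\ref{proposition:ordinaryimpliesdimensionzero} and Theorem~\ref{theorem:monotonicitygamma} rather than citing an external reference, which keeps the argument self-contained.
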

\begin{proof}
For some $s \geq 1$, if $\gamma_{\mathcal{M}}(s) = 0$, we know that $\gamma_{\mathcal{M}}(1) = 0$ by Theorem \ref{theorem:monotonicitygamma}. By Lemma \ref{lemma:connectiondimension}, we can assume that $\mathrm{dim}(\mathbf{C}_1^0) = 0$. Hence $|I_-^{\pi}| = 0$; see Example \ref{example:stabilizerunipotent} for the definition of $I_-^{\pi}$.

As $(M, \varphi)$ is isomorphic to $(M, g\varphi_{\pi})$ for some $g \equiv 1$ modulo $p$ and the isomorphism number of an ordinary $F$-crystal is less than or equal to $1$ (for example, see \cite[Section 2.3]{Xiao:computing}), in order to show that $\mathcal{M}$ is ordinary, it is enough to show that $(M, \varphi_{\pi})$ is ordinary. Write $\pi$ as a product of disjoint cycle $\pi_u$, it is clear that $(M, \varphi_{\pi}) = \bigoplus_u (M, \varphi_{\pi_u})$. To show that $\mathcal{M}$ is ordinary we can assume that $\pi$ is a cycle and show that $(M, \varphi_{\pi})$ is isoclinic ordinary.  Let $r$ be the rank of $M$.

As $\pi$ is an $r$-cycle, we know that $(\pi \times \pi) = \prod_{u=1}^r (\pi \times \pi)_u$ and each $(\pi \times \pi)_u$ is an $r$-cycle (as a permutation of $I \times I$). Recall a pair $(i,j) \in I \times I$ is said to be in $(\pi \times \pi)_u$ if and only if $(\pi \times \pi)_u(i,j) \neq (i,j)$. It is easy to see that if there is a pair $(i,j) \in I_+$ in $(\pi \times \pi)_u$, then there is also a pair $(i,j) \in I_-$ in $(\pi \times \pi)_u$, and vice versa. Since $I_-^{\pi}$ is an empty set, we know that there is no $(\pi \times \pi)_u$ such that $(\pi \times \pi)_u$ sends an element in $I_-$ to an element in $I_+$ by an argument used in Example \ref{example:stabilizerunipotent}. This means that if there is an element in $I_-$ (or $I_+$ respectively) that is also in $(\pi \times \pi)_u$, then there are elements also elements in $I_0$ and in $I_+$ (or $I_-$ respectively) that are in $(\pi \times \pi)_u$.

The fact that $I_-^{\pi}$ is empty means that for all $(i,j) \in I_-$, if $\nu_{\pi}(i,j)$ is the smallest positive integer such that $(\pi^{\nu_{\pi}(i,j)}(i), \pi^{\nu_{\pi}(i,j)}(j)) \in I_+ \cup I_-$, then it is in $I_-$. Start with an element $(i,j) \in I_-$, and apply this fact recursively. We can see that for every integer $n$, $(\pi^n(i), \pi^n(j)) \notin I_+$. This is a contradiction as we know that there must be some element in $I_+$ that is in $(\pi \times \pi)_u$. Therefore we conclude that every element in $(\pi \times \pi)_u$ is in $I_0$. This means that all the Hodge slopes of $(M, \varphi_{\pi})$ are equal and hence $(M, \varphi_{\pi})$ is isoclinic ordinary.
\end{proof}

\begin{corollary}
The inequality $\gamma_{\mathcal{M}}(1) \geq 0$ is an equality if and only if $\mathcal{M}$ is ordinary. When the equality holds, we have $\gamma_{\mathcal{M}}(s)  = 0$ for all $s \geq 1$.
\end{corollary}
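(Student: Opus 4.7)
My plan is to deduce the corollary directly from the two immediately preceding propositions, which together supply both implications of the biconditional as well as the final vanishing statement. Note first that $\gamma_{\mathcal{M}}(1) = \dim(\mathbf{Aut}_1(\mathcal{M})) \geq 0$ is the trivial non-negativity of a scheme-theoretic dimension, so the entire content of the corollary lies in characterizing when equality is attained and in propagating the vanishing to all levels $s \geq 1$.

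For the forward direction I would assume $\gamma_{\mathcal{M}}(1) = 0$ and apply Proposition \ref{proposition:dimensionzeroimpliesordinary} with $s = 1$ to conclude that $\mathcal{M}$ is ordinary. For the reverse direction I would assume $\mathcal{M}$ is ordinary and invoke Proposition \ref{proposition:ordinaryimpliesdimensionzero} to obtain $\gamma_{\mathcal{M}}(s) = 0$ for every $s \geq 1$, which in particular yields $\gamma_{\mathcal{M}}(1) = 0$ and simultaneously establishes the final sentence of the corollary: once ordinariness is known from the equality at level one, the same proposition propagates the vanishing to every $s \geq 1$. There is essentially no obstacle here — the two implications have been shouldered by the propositions just proved, and this corollary merely repackages them as a single clean equivalence. (One could alternatively try to route the argument through the monotonicity Theorem \ref{theorem:monotonicitygamma}, but that theorem only gives strict increase up to $s = n_{\mathcal{M}}$ and eventual stabilization, which by itself does not force $\gamma_{\mathcal{M}}(s) = 0$ for all $s$ from $\gamma_{\mathcal{M}}(1) = 0$; one would still have to appeal to Proposition \ref{proposition:dimensionzeroimpliesordinary} to conclude ordinariness and then to Proposition \ref{proposition:ordinaryimpliesdimensionzero} to propagate the vanishing, so the direct route above is the most efficient.)
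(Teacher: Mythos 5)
Your proposal is correct and is exactly the intended argument: the paper states this corollary without proof precisely because it is the immediate conjunction of Proposition \ref{proposition:ordinaryimpliesdimensionzero} and Proposition \ref{proposition:dimensionzeroimpliesordinary}, used in the way you describe. Your parenthetical remark about the monotonicity theorem is also accurate — Theorem \ref{theorem:monotonicitygamma} alone would not suffice, since stabilization at level $n_{\mathcal{M}}$ does not by itself force vanishing.
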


\section{Invariants}
In this section, we introduce several invariants of $F$-crystals over $k$. They are the generalizations of the $p$-divisible groups case introduced in \cite{Vasiu:traversosolved}. It will turn out that these invariants are all equal to the isomorphism number. They provide a good source of computing the isomorphism number from different points of view. All the proofs of this section follow closely the ones of \cite{Vasiu:traversosolved}.

\subsection{Notations} 
Recall that for every $F$-crystal $\mathcal{M} = (M, \varphi)$ and every field extension $k \subset k'$ with $k'$ perfect, we have an $F$-crystal over $k'$ \[\mathcal{M}_{k'} = (M_{k'},\varphi_{k'}) := (M \otimes_{W(k)} W(k'), \varphi \otimes \sigma_{k'}).\]
We denote by $\mathcal{M}^* = (M^*, \varphi)$ the dual of $\mathcal{M}$, where $M^* = \mathrm{Hom}_{W(k)}(M,W(k))$ and $\varphi (f) = \varphi f \varphi^{-1}$ for $f \in M^*$. Note that the pair $(M^*, \varphi)$ is not an $F$-crystal in general, it is just a latticed $F$-isocrystal (meaning that $\varphi$ is an isomorphism after tensored with $B(k)$ but $\varphi(M^*) \not\subset M^*$ in general). We denote by $H_{\infty} = \mathrm{Hom}(\mathcal{M}_1, \mathcal{M}_2)$ the (additive) group of all homomorphisms of $F$-crystals from $\mathcal{M}_1$ to $\mathcal{M}_2$. It is a finitely generated $\mathbb{Z}_p$-module. For every integer $s \geq 1$, let $H_s = \mathrm{Hom}_s(\mathcal{M}_1, \mathcal{M}_2) = \mathbf{Hom}_s(\mathcal{M}_1, \mathcal{M}_2)(k)$ be the (additive) group of all homomorphisms from $F_s(\mathcal{M}_1)$ to $F_s(\mathcal{M}_2)$. It is a $\mathbb{Z}_p/p^s\mathbb{Z}_p$-module but not necessarily finitely generated in general. We denote by $\pi_{\infty, s} : H_{\infty} \to H_s$ and $\pi_{t, s}: H_t \to H_s, t \geq s$ the natural projections. We have two exact sequences:
\[0 \xrightarrow{\; \; \; \; \; \; \;} H_{\infty} \xrightarrow{\; \; \; p^s \; \;\;} H_{\infty} \xrightarrow{\pi_{\infty,s}} H_s,\]
and
\[0  \xrightarrow{\; \; \; \; \; \; \;} H_s \xrightarrow{\; \; \; p \; \; \;} H_{s+1} \xrightarrow{\pi_{s+1,1}} H_1.\]
Let $r_1$ and $r_2$ be the ranks of $M_1$ and $M_2$ respectively. 
\subsection{The endomorphism number}

In this subsection, we generalize the endomorphism number defined in \cite[Section 2]{Vasiu:traversosolved} for $p$-divisible groups. The following proposition is a generalization of \cite[Lemma 2.1]{Vasiu:traversosolved}. For the sake of generality, we will work with the homomorphism version.

\begin{proposition} \label{proposition:eexists}
There exists a non-negative integer $e_{\mathcal{M}_1,\mathcal{M}_2}$ which depends only on $\mathcal{M}_1$ and $\mathcal{M}_2$ with the following property: For every positive integer $n$ and every non-negative integer $e$, we have $\mathrm{Im}(\pi_{\infty, n}) = \mathrm{Im}(\pi_{n+e, n})$ if and only if $e \geq e_{\mathcal{M}_1, \mathcal{M}_2}$.
\end{proposition}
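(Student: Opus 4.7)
The plan is to define $e_{\mathcal{M}_1,\mathcal{M}_2}$ to be the smallest non-negative integer $e$ for which the stated property holds, once the existence of some such $e$ is established; so the substance of the proof is to produce one integer $e$, depending only on the pair $\mathcal{M}_1,\mathcal{M}_2$, with $\mathrm{Im}(\pi_{n+e,n}) = \mathrm{Im}(\pi_{\infty,n})$ for every $n \geq 1$.

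First I would record the two elementary containments that follow from the factorizations $\pi_{\infty,n} = \pi_{n+e,n} \circ \pi_{\infty,n+e}$ and $\pi_{n+e',n} = \pi_{n+e,n} \circ \pi_{n+e',n+e}$ whenever $e' \geq e \geq 0$, namely
\[
\mathrm{Im}(\pi_{\infty,n}) \;\subseteq\; \mathrm{Im}(\pi_{n+e',n}) \;\subseteq\; \mathrm{Im}(\pi_{n+e,n}).
\]
Thus the decreasing chain $(\mathrm{Im}(\pi_{n+e,n}))_{e \geq 0}$ of subgroups of $H_n$ always contains $\mathrm{Im}(\pi_{\infty,n})$, and one only needs the reverse inclusion for some $e$ uniform in $n$.

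Next I would recast the problem inside $L := \mathrm{Hom}_{W(k)}(M_1,M_2)$, a free $W(k)$-module of rank $r_1 r_2$, equipped with the $\sigma$-linear operator $\Phi : L \otimes_{W(k)} B(k) \to L \otimes_{W(k)} B(k)$ defined by $\Phi(f) = \varphi_2 \circ f \circ \varphi_1^{-1}$. Then $(L \otimes B(k), \Phi)$ is an $F$-isocrystal, and $H_\infty = L \cap (L \otimes B(k))^{\Phi = \mathrm{id}}$. By Proposition \ref{proposition:anotherdefinitionofftruncation}, membership $h[n] \in \mathrm{Im}(\pi_{t,n})$ amounts to the existence of a lift $h \in L$ of $h[n]$ such that, for any fixed $F$-basis of $\mathcal{M}_1$, the defect $\Phi(h) - h$ is integrally bounded by $p^{t}$ in the sense made precise there (with the integrality keyed to the Hodge slopes). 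The core lifting statement to be proved is: there exists a constant $C = C(\mathcal{M}_1,\mathcal{M}_2) \geq 0$ such that whenever $h \in L$ has defect bounded by $p^{n+C}$, there is $g \in p^n L$ with $h + g \in H_\infty$. Granting this, taking $e = C$ and applying the statement to any lift of $h[n] \in \mathrm{Im}(\pi_{n+C,n})$ yields $h[n] \in \mathrm{Im}(\pi_{\infty,n})$, completing the argument.

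To establish the lifting statement I would invoke the Dieudonn\'e--Manin classification to decompose $(L \otimes B(k), \Phi)$ into isotypic components indexed by its Newton slopes; on every component of slope $\alpha \neq 0$ the operator $\Phi - \mathrm{id}$ is a $B(k)$-linear automorphism whose inverse has $p$-adic denominator bounded by a function of $\alpha$, while on the slope-$0$ component $\Phi - \mathrm{id}$ is surjective on lattices (Artin--Schreier / Hilbert 90 over the perfect field $k$) with kernel $H_\infty \otimes_{\mathbb{Z}_p} \mathbb{Q}_p$. Gluing these component-wise solutions via a Newton-iteration style correction produces the required $g$ and, through the iteration, the single constant $C$. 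The main technical obstacle I anticipate is the uniform integral control: the lattice $L$ is only isogenous to the standard Dieudonn\'e--Manin lattice of $(L \otimes B(k), \Phi)$, so both the inverse of $\Phi - \mathrm{id}$ (on the non-zero-slope part) and a chosen set-theoretic section of $\Phi - \mathrm{id}$ (on the slope-$0$ part) must be bounded by a common constant depending only on the Hodge and Newton polygons of $\mathcal{M}_1,\mathcal{M}_2$ and on $r_1, r_2$; absorbing all such denominators into a single $C$ is the heart of the argument, and this is precisely what forces $C$ (and hence $e_{\mathcal{M}_1,\mathcal{M}_2}$) to depend on $\mathcal{M}_1,\mathcal{M}_2$ but not on $n$.
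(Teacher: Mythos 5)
Your route to \emph{existence} of a uniform bound is genuinely different from the paper's: rather than citing \cite[Theorem 5.1.1(a)]{Vasiu:CBP} to get a bound for large $n$ and then propagating it down via $e(n)\le e(n+1)+1$, you produce a single constant $C$ for all $n$ directly out of the level-module machinery (Dieudonn\'e--Manin decomposition of $(L\otimes B(k),\Phi)$, Artin--Schreier on the slope-$0$ part, and the inclusions $p^{\ell}L\subset O\subset L$). This is sound and in effect anticipates the argument the paper gives later in Subsection \ref{subsection:e<f}; Lemma \ref{lemma:solveequation} is exactly your component-wise solver, and no Newton iteration is actually needed once one notes $x\in p^{n+\ell}H\subset p^{n}O$. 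There is no circularity since the level module and that lemma do not rely on this proposition.

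However, there is a genuine gap. The proposition is an ``if and only if'': it asserts that the $n$-dependent threshold $e(n):=\min\{e\ge 0 : \mathrm{Im}(\pi_{\infty,n})=\mathrm{Im}(\pi_{n+e,n})\}$ is in fact constant in $n$. You define $e_{\mathcal{M}_1,\mathcal{M}_2}$ as the smallest $e$ that works for \emph{every} $n$, i.e.\ $\sup_n e(n)$; with that definition the ``if'' direction is immediate and upward-closedness in $e$ (which you correctly note) makes each $e(n)$ well-defined, but the ``only if'' direction --- that equality at any single $n$ forces $e\ge e_{\mathcal{M}_1,\mathcal{M}_2}$ --- is precisely the statement $e(n)=e(1)$ for all $n$, and your proposal does not address it. The paper establishes this by a snake-lemma argument on the commuting ladders
\[
0 \to \mathrm{Im}(\pi_{*,n}) \xrightarrow{\ p\ } \mathrm{Im}(\pi_{*,n+1}) \to \mathrm{Im}(\pi_{*,1}),
\]
for $*\in\{\infty,\,n+e\}$, deducing $e(n)\le e(n+1)\le \max\bigl(e(n),e(1)\bigr)$ and hence $e(n)=e(1)$ by induction. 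Without some such comparison across levels you have only bounded the $e(n)$ uniformly from above, not shown they coincide, so the stated proposition does not follow.
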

\begin{proof}
We first prove that $e_{\mathcal{M}_1,\mathcal{M}_2}$ exists for each $n$ and then prove that it does not depend on $n$. Note that $\pi_{\infty, n} = \pi_{n+1,n} \circ \pi_{\infty, n+1}$ and $\pi_{n+e, n} = \pi_{n+1,n} \circ \pi_{n+e,n+1}$. If $\mathrm{Im}(\pi_{\infty,n+1}) = \mathrm{Im}(\pi_{n+e,n+1})$ for all $e-1 \geq e_{\mathcal{M}_1, \mathcal{M}_2}(n+1)$, then $\mathrm{Im}(\pi_{\infty,n}) = \mathrm{Im}(\pi_{n+e,n})$. Thus $e_{\mathcal{M}_1, \mathcal{M}_2}(n) \leq e_{\mathcal{M}_1, \mathcal{M}_2}(n+1)+1$. Hence to show that $e_{\mathcal{M}_1, \mathcal{M}_2}(n)$ exists for all positive integer $n$, it is enough to show that $e_{\mathcal{M}_1, \mathcal{M}_2}(n)$ exists for sufficient large $n$.

Let
$H'_n := \mathrm{Hom}_{W_n(k)}((M_1/p^nM_1, \varphi_1), (M_2/p^nM_2, \varphi_2))$. It is the (additive) group of all $W_n(k)$-linear homomorphisms $h:M_1/p^nM_1 \to M_2/p^nM_2$ such that $\varphi_2 h \equiv h \varphi_1$ modulo $p^n$. Thus $H_n$ is a subgroup of $H_n'$.

The existence of $e_{\mathcal{M}_1,\mathcal{M}_2}$ for each $n$ relies on the following commutative diagram:
\[\xymatrixcolsep{4pc} \xymatrix{
H_{\infty} \ar[r]^-{\pi_{\infty, n}} \ar[rd]_-{\pi'_{\infty, n}} &  H_{n} \ar@{^{(}->}[d]& H_{n+e} \ar[l]_-{\pi_{n+e, n}} \ar@{^{(}->}[d]\\
& H'_{n} & H'_{n+e}\ar[l]_-{\pi'_{n+e,n}} }\]
where $\pi'_{\infty,n}$ and $\pi'_{n+e,n}$ are the natural projections.

By \cite[Theorem 5.1.1(a)]{Vasiu:CBP}, we know that for any sufficient large $n$ (in fact $n \geq n_{12}$), there exists a positive integer $e_{\mathcal{M}_1,\mathcal{M}_2}(n)$ such that for all $e\geq e_{\mathcal{M}_1,\mathcal{M}_2}(n)$, $\mathrm{Im}(\pi'_{\infty, n}) = \mathrm{Im}(\pi'_{n+e, n})$. Therefore the images of $\mathrm{Im}(\pi_{\infty,n})$ and $\mathrm{Im}(\pi_{n+e,n})$ in $H'_n$ are the same. Thus $\mathrm{Im}(\pi_{n+e, n}) = \mathrm{Im}(\pi_{\infty, n})$ for all $e \geq e_{\mathcal{M}_1,\mathcal{M}_2}(n)$. This proves that $e_{\mathcal{M}_1,\mathcal{M}_2}(n)$ exists for each $n$.

Now we show that $e_{\mathcal{M}_1,\mathcal{M}_2}(n)$ does not depend on $n$. The proof relies on the following commutative diagram:
\[
\xymatrix{
0 \ar[r] & \mathrm{Im}(\pi_{\infty,n}) \ar[r]^-p \ar[d]^{i_1} & \mathrm{Im}(\pi_{\infty,n+1}) \ar[r] \ar[d]^{i_2} & \mathrm{Im}(\pi_{\infty,1}) \ar[r] \ar[d]^{i_3} & 0 \\
0 \ar[r] & \mathrm{Im}(\pi_{n+e,n}) \ar[r]^-p & \mathrm{Im}(\pi_{n+1+e,n+1}) \ar[r] & \mathrm{Im}(\pi_{1+e,1})
}\]
with horizontal exact sequences and with all vertical maps injective. By the snake lemma, we have an exact sequence
\[0  \to \mathrm{Coker}(i_1) \to \mathrm{Coker}(i_2) \to \mathrm{Coker}(i_3).\]
If we take $e \geq e_{\mathcal{M}_1,\mathcal{M}_2}(n+1)$, then $\mathrm{Coker}(i_2) = 0$. Thus $\mathrm{Coker}(i_1)=0$ and $e_{\mathcal{M}_1,\mathcal{M}_2}(n+1) \geq e_{\mathcal{M}_1,\mathcal{M}_2}(n)$. If we take $e \geq \mathrm{max}(e_{\mathcal{M}_1,\mathcal{M}_2}(n),e_{\mathcal{M}_1,\mathcal{M}_2}(1))$, then $\mathrm{Coker}(i_2)=0$. Thus $e_{\mathcal{M}_1,\mathcal{M}_2}(n+1) \leq \mathrm{max}(e_{\mathcal{M}_1,\mathcal{M}_2}(n),e_{\mathcal{M}_1,\mathcal{M}_2}(1))$. An easy induction on $n \geq 1$ using the sequence of inequalities
\[e_{\mathcal{M}_1,\mathcal{M}_2}(n) \leq e_{\mathcal{M}_1,\mathcal{M}_2}(n+1) \leq \mathrm{max}(e_{\mathcal{M}_1,\mathcal{M}_2}(n),e_{\mathcal{M}_1,\mathcal{M}_2}(1))\]
gives that $e_{\mathcal{M}_1,\mathcal{M}_2}(n) = e_{\mathcal{M}_1,\mathcal{M}_2}(1)$ for all $n$.
\end{proof}

\begin{definition}
The non-negative integer $e_{\mathcal{M}_1,\mathcal{M}_2}$ of Proposition \ref{proposition:eexists} is called the \emph{homomorphism number} of $\mathcal{M}_1$ and $\mathcal{M}_2$. If $\mathcal{M}_1 = \mathcal{M}_2 = \mathcal{M}$, we denote $e_{\mathcal{M},\mathcal{M}}$ by $e_{\mathcal{M}}$ and call it the \emph{endomorphism number} of $\mathcal{M}$.
\end{definition}

The following lemma is a generalization of \cite[Lemma 2.8(c)]{Vasiu:traversosolved} and is proved in a similar way.

\begin{lemma}
Let $k \subset k'$ be an extension of algebraically closed fields. We have $e_{\mathcal{M}_1,\mathcal{M}_2} = e_{\mathcal{M}_{1,k'},\mathcal{M}_{2,k'}}$.
\end{lemma}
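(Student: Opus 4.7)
The plan is to reinterpret $e_{\mathcal{M}_1, \mathcal{M}_2}$ as the stabilization index of a descending chain of closed subgroup schemes, a quantity that is manifestly invariant under faithfully flat base change. From the construction in Subsection \ref{subsection:groupscheme}, the group scheme $\mathbf{H}_s := \mathbf{Hom}_s(\mathcal{M}_1, \mathcal{M}_2)$ is cut out of an affine space over $k$ by the polynomial equations \eqref{equation:solvehomwitt1} and \eqref{equation:solvehomwitt2}, whose defining Witt polynomials have coefficients in $\mathbb{Z}$; hence there is a canonical identification $\mathbf{H}_s \times_k \mathrm{Spec}\,k' \cong \mathbf{Hom}_s(\mathcal{M}_{1,k'}, \mathcal{M}_{2,k'})$, and the reduction morphisms $\pi_{t,s} : \mathbf{H}_t \to \mathbf{H}_s$ base-change to the corresponding morphisms over $k'$.

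Fix $n \geq 1$. For each $e \geq 0$, let $\mathbf{I}_{n,e}$ denote the image of $\pi_{n+e,n}$, equipped with its reduced induced subscheme structure. Since $\pi_{n+e,n}$ is a homomorphism of affine algebraic groups over $k$, the subset $\mathbf{I}_{n,e}$ is a closed subgroup scheme of $\mathbf{H}_n$, and its $k$-points coincide with $\mathrm{Im}(\pi_{n+e,n})$. The descending chain $\mathbf{I}_{n,0} \supseteq \mathbf{I}_{n,1} \supseteq \cdots$ in the Noetherian scheme $\mathbf{H}_n$ stabilizes at a smallest index $e^*(n)$. By Proposition \ref{proposition:eexists}, we have $e^*(n) = e_{\mathcal{M}_1, \mathcal{M}_2}$, and the stable value has $k$-points equal to $\mathrm{Im}(\pi_{\infty,n})$.

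Now base change to $k'$. Since $k \subset k'$ is an extension of perfect fields, $\mathrm{Spec}\,k' \to \mathrm{Spec}\,k$ is faithfully flat and preserves reducedness; hence $(\mathbf{I}_{n,e})_{k'}$ is the analogous image scheme for the pair $(\mathcal{M}_{1,k'}, \mathcal{M}_{2,k'})$. Faithful flatness then gives $\mathbf{I}_{n,e} = \mathbf{I}_{n,e+1}$ over $k$ if and only if the analogous equality holds after base change to $k'$, so the stabilization indices of the two chains coincide. Applying the identification from the previous paragraph also over $k'$ yields $e_{\mathcal{M}_1,\mathcal{M}_2} = e_{\mathcal{M}_{1,k'}, \mathcal{M}_{2,k'}}$. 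The main subtle point is the identification of the stable $k$-points with $\mathrm{Im}(\pi_{\infty,n})$ rather than merely with the intersection $\bigcap_e \mathrm{Im}(\pi_{n+e,n})$: this is exactly what the proof of Proposition \ref{proposition:eexists} supplies, by reducing to the finiteness statement \cite[Theorem 5.1.1(a)]{Vasiu:CBP} through the auxiliary group $H'_n$ of $W_n(k)$-linear maps intertwining $\varphi_1$ and $\varphi_2$ modulo $p^n$. Since that proposition applies verbatim over $k'$, no additional obstacle arises.
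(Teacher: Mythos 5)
Your argument is correct and follows essentially the same route as the paper's: both characterize $e_{\mathcal{M}_1,\mathcal{M}_2}$ as the stabilization index of the descending chain of reduced scheme-theoretic images of the reduction maps $\pi_{n+e,n}$ inside $\mathbf{H}_n$, then use compatibility of those image schemes with the (faithfully flat) base change $k \to k'$ and the fact that reduced finite-type schemes over an algebraically closed field are determined by their rational points. One small inaccuracy: the defining equations \eqref{equation:solvehomwitt1} and \eqref{equation:solvehomwitt2} have coefficients in $k$, not $\mathbb{Z}$, since after the $\mathbb{Z}$-coefficient Witt polynomials $P_{r,q}$ one substitutes specific elements $\sigma(d)$, $v_{in}$, $\sigma(w_{mj}) \in W(k)$ whose Witt components lie in $k$; this changes nothing in the argument, because the defining ideal is still simply tensored up along $k \to k'$, but it is worth stating precisely.
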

\begin{proof}
When $m \geq n$, let $\pi_{m,n} : \mathbf{H}_m \to \mathbf{H}_n$ be the canonical reduction homomorphism and let $\mathbf{H}_{m,n}$ be the scheme theoretic image of $\pi_{m,n}$, which is of finite type over $k$, and whose definition is compatible with base change $k \subset k'$. If $l \geq m$, then $\mathbf{H}_{l,n}$ is a subgroup scheme of $\mathbf{H}_{m,n}$. By the definition of $e_{\mathcal{M}_1,\mathcal{M}_2}$, we have $m-n \geq e_{\mathcal{M}_1,\mathcal{M}_2}$ if and only if $\mathbf{H}_{m,n}(k) = \mathbf{H}_{l,n}(k)$. As $k$ and $k'$ are algebraically closed, we have $\mathbf{H}_{m,n}(k) = \mathbf{H}_{l,n}(k)$ if and only if $\mathbf{H}_{m,n}(k') = \mathbf{H}_{l,n}(k')$. This is further equivalent to $(\mathbf{H}_{m,n})_{k'}(k') = (\mathbf{H}_{l,n})_{k'}(k')$, thus $e_{\mathcal{M}_1,\mathcal{M}_2} = e_{\mathcal{M}_{1,k'},\mathcal{M}_{2,k'}}$.
\end{proof}

\subsection{Coarse endomorphism number} \label{subsection:cendnum}

In this subsection, we generalize the coarse endomorphism number defined in \cite[Section 7]{Vasiu:traversosolved} for $p$-divisible groups. The following proposition is a generalization of \cite[Lemma 7.1]{Vasiu:traversosolved}. Again for the sake of generality, we will work with the homomorphism version.

\begin{lemma} \label{lemma:cendnumexists}
There exists a non-negative integer $f_{\mathcal{M}_1,\mathcal{M}_2}$ that depends on $\mathcal{M}_1$ and $\mathcal{M}_2$ such that for positive integers $m \geq n$, the restriction homomorphism $\pi_{m,n}: H_m \to H_n$ has finite image if and only if $m \geq n+f_{\mathcal{M}_1,\mathcal{M}_2}$.
\end{lemma}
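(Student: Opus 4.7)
The plan is to introduce, for each positive integer $n$, the quantity
\[
f(n) \ := \ \min\{\, e \geq 0 \,:\, \mathrm{Im}(\pi_{n+e, n}) \text{ is finite}\,\},
\]
first prove it exists, and then show that $f(n) = f(1)$ for every $n$; we may then set $f_{\mathcal{M}_1,\mathcal{M}_2} := f(1)$. Existence of $f(n)$ follows at once from Proposition \ref{proposition:eexists}: for every $e \geq e_{\mathcal{M}_1,\mathcal{M}_2}$ we have $\mathrm{Im}(\pi_{n+e,n}) = \mathrm{Im}(\pi_{\infty,n})$, and this latter group is a quotient of the finitely generated $\mathbb{Z}_p$-module $H_{\infty}$ annihilated by $p^n$, hence finite. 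Since $\{\mathrm{Im}(\pi_{m,n})\}_{m \geq n}$ is a decreasing chain of subgroups of $H_n$, once it becomes finite it stays finite; so $f(n)$ is well-defined and satisfies $0 \leq f(n) \leq e_{\mathcal{M}_1,\mathcal{M}_2}$.

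For the independence of $n$, the key input will be the short exact sequence
\[
0 \longrightarrow \mathrm{Im}(\pi_{m-1,n}) \xrightarrow{\ p\ } \mathrm{Im}(\pi_{m,n+1}) \xrightarrow{\pi_{n+1,1}} \mathrm{Im}(\pi_{m,1}) \longrightarrow 0,
\]
valid for all $n \geq 1$ and $m \geq n+1$. I will extract this SES from the commutative diagram
\[
\xymatrix{
0 \ar[r] & H_{m-1} \ar[r]^-{p} \ar[d]_-{\pi_{m-1,n}} & H_m \ar[r]^-{\pi_{m,1}} \ar[d]_-{\pi_{m,n+1}} & H_1 \ar@{=}[d] \\
0 \ar[r] & H_n \ar[r]^-{p} & H_{n+1} \ar[r]^-{\pi_{n+1,1}} & H_1
}
\]
whose rows are instances of the exact sequence $0 \to H_s \xrightarrow{p} H_{s+1} \to H_1$ recorded in the Notations subsection, by establishing the identification $\mathrm{Im}(\pi_{m,n+1}) \cap pH_n = p \cdot \mathrm{Im}(\pi_{m-1,n})$. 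This identification in turn reduces to a direct lift argument: an element $y \in H_m$ with $\pi_{m,n+1}(y) \in pH_n = \ker \pi_{n+1,1}$ satisfies $\pi_{m,1}(y) = 0$, whence $y = p y'$ for a unique $y' \in H_{m-1}$, and conversely.

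The SES implies that $\mathrm{Im}(\pi_{m,n+1})$ is finite if and only if both $\mathrm{Im}(\pi_{m-1,n})$ and $\mathrm{Im}(\pi_{m,1})$ are finite. Applying this with $m = n+1+f(n+1)$ yields $f(n) \leq f(n+1)$. For the reverse inequality I will proceed by induction on $n$: assuming $f(k) = f(1)$ for every $k \leq n$, choose $m = n+1+f(1)$; then $m - 1 = n + f(n)$ forces $\mathrm{Im}(\pi_{m-1,n})$ to be finite, while $m \geq 1+f(1)$ forces $\mathrm{Im}(\pi_{m,1})$ to be finite, so the SES gives $\mathrm{Im}(\pi_{m,n+1})$ finite and hence $f(n+1) \leq f(1)$. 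Combined with $f(n) = f(1) \leq f(n+1)$, this yields $f(n+1) = f(1)$, completing the induction.

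The main obstacle I expect is the precise verification of this short exact sequence, in particular the identification $\mathrm{Im}(\pi_{m,n+1}) \cap pH_n = p \cdot \mathrm{Im}(\pi_{m-1,n})$; once this is in hand, both the existence of $f(n)$ and its independence of $n$ reduce to a routine induction.
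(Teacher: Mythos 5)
Your proof is correct and follows essentially the same route as the paper: existence of $f(n)$ comes from Proposition \ref{proposition:eexists} together with the finiteness of $\mathrm{Im}(\pi_{\infty,n})$, and the independence of $n$ is extracted from the multiplication-by-$p$ exact sequence linking $\mathrm{Im}(\pi_{m-1,n})$, $\mathrm{Im}(\pi_{m,n+1})$, and $\mathrm{Im}(\pi_{m,1})$. You are in fact a bit more careful than the paper's printed version, which writes $\mathrm{Im}(\pi_{m,n})$ on the left instead of the correct $\mathrm{Im}(\pi_{m-1,n})$; you also verify right exactness, although only left and middle exactness are actually used in the finiteness argument.
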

\begin{proof}
As $H_{\infty}$ is a finitely generated $\mathbb{Z}_p$-module, $\mathrm{Im}(\pi_{\infty,n})$ inside the $p^n$-torsion $\mathbb{Z}_p$-module $H_n$ is finite. By Proposition \ref{proposition:eexists}, there exists $f_{\mathcal{M}_1,\mathcal{M}_2}(n)$ such that for all $m\geq n+f_{\mathcal{M}_1,\mathcal{M}_2}(n)$, $\mathrm{Im}(\pi_{m,n}) = \mathrm{Im}(\pi_{\infty,n})$ is finite. 

To show that $f_{\mathcal{M}_1, \mathcal{M}_2}(n)$ is independent of $n$, we consider the exact sequence
\[
\xymatrix{
0 \ar[r] & \mathrm{Im}(\pi_{m,n}) \ar[r]^-p & \mathrm{Im}(\pi_{m,n+1}) \ar[r] & \mathrm{Im}(\pi_{m,1}) 
}.\]
It implies that 
\[f_{\mathcal{M}_1,\mathcal{M}_2}(n) \leq f_{\mathcal{M}_1,\mathcal{M}_2}(n+1) \leq \mathrm{max}(f_{\mathcal{M}_1,\mathcal{M}_2}(n),f_{\mathcal{M}_1,\mathcal{M}_2}(1))\]
An easy induction on $n \geq 1$ shows that $f_{\mathcal{M}_1,\mathcal{M}_2}(n) = f_{\mathcal{M}_1,\mathcal{M}_2}(1)$ for all $n \geq 1$.
\end{proof}

\begin{definition}
The non-negative integer $f_{\mathcal{M}_1,\mathcal{M}_2}$ of Lemma \ref{lemma:cendnumexists} is called the \emph{coarse homomorphism number} of $\mathcal{M}_1$ and $\mathcal{M}_2$. If $\mathcal{M}_1 = \mathcal{M}_2 = \mathcal{M}$, we denote $f_{\mathcal{M},\mathcal{M}}$ by $f_{\mathcal{M}}$ and call it the \emph{coarse endomorphism number} of $\mathcal{M}$.
\end{definition}

\begin{proposition} \label{proposition:f<e}
We have an inequality $f_{\mathcal{M}_1,\mathcal{M}_2} \leq e_{\mathcal{M}_1,\mathcal{M}_2}$.
\end{proposition}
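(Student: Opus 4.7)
The plan is to deduce this inequality immediately from the definitions plus the observation that $H_\infty = \mathrm{Hom}(\mathcal{M}_1, \mathcal{M}_2)$ is a finitely generated $\mathbb{Z}_p$-module (already noted after the definition of $H_\infty$). The argument will essentially repackage what already appeared implicitly inside the proof of Lemma \ref{lemma:cendnumexists}, where $e_{\mathcal{M}_1,\mathcal{M}_2}$ was used to produce $f_{\mathcal{M}_1,\mathcal{M}_2}$.

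Concretely, I would fix integers $m \geq n \geq 1$ with $m - n \geq e_{\mathcal{M}_1,\mathcal{M}_2}$ and invoke Proposition \ref{proposition:eexists} to get the equality $\mathrm{Im}(\pi_{m,n}) = \mathrm{Im}(\pi_{\infty,n})$. Since $\mathrm{Im}(\pi_{\infty,n})$ is a quotient of the finitely generated $\mathbb{Z}_p$-module $H_\infty$ and lands inside $H_n$, which is annihilated by $p^n$, the image $\mathrm{Im}(\pi_{\infty,n})$ is a finitely generated module over $\mathbb{Z}_p/p^n\mathbb{Z}_p$, hence finite. Consequently $\mathrm{Im}(\pi_{m,n})$ is finite whenever $m - n \geq e_{\mathcal{M}_1,\mathcal{M}_2}$.

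Since $f_{\mathcal{M}_1,\mathcal{M}_2}$ is characterized by Lemma \ref{lemma:cendnumexists} as the smallest non-negative integer such that $\pi_{m,n}$ has finite image whenever $m - n \geq f_{\mathcal{M}_1,\mathcal{M}_2}$, the previous paragraph forces $f_{\mathcal{M}_1,\mathcal{M}_2} \leq e_{\mathcal{M}_1,\mathcal{M}_2}$.

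There is no real obstacle: the only content is the finiteness of $\mathrm{Im}(\pi_{\infty,n})$, which in turn rests on the fact that $\mathrm{Hom}(\mathcal{M}_1,\mathcal{M}_2)$ is a finitely generated $\mathbb{Z}_p$-module. Everything else is bookkeeping via the minimality definitions of $e_{\mathcal{M}_1,\mathcal{M}_2}$ and $f_{\mathcal{M}_1,\mathcal{M}_2}$.
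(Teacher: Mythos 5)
Your proof is correct and follows the same route as the paper: the paper's entire proof is the one-liner "It is clear as $\mathrm{Im}(\pi_{\infty,n})$ is finite," and you have simply unpacked that observation (finiteness of $\mathrm{Im}(\pi_{\infty,n})$ coming from $H_\infty$ being a finitely generated $\mathbb{Z}_p$-module mapping into the $p^n$-torsion module $H_n$) together with the defining minimality properties of $e_{\mathcal{M}_1,\mathcal{M}_2}$ and $f_{\mathcal{M}_1,\mathcal{M}_2}$ from Proposition \ref{proposition:eexists} and Lemma \ref{lemma:cendnumexists}.
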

\begin{proof}
It is clear as $\mathrm{Im}(\pi_{\infty,n})$ is finite.
\end{proof}

\begin{lemma} \label{lemma:fextension}
Let $k \subset k'$ be an extension of algebraically closed fields. We have $f_{\mathcal{M}_1,\mathcal{M}_2} = f_{\mathcal{M}_{1,k'},\mathcal{M}_{2,k'}}$.
\end{lemma}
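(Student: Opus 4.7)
The plan is to mimic the strategy used for the analogous statement about $e_{\mathcal{M}_1,\mathcal{M}_2}$, replacing ``the image equals $\mathrm{Im}(\pi_{\infty,n})$'' by ``the image is finite,'' which is a purely dimension-theoretic condition and therefore behaves well under base change.

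For integers $m \geq n \geq 1$, let $\pi_{m,n}:\mathbf{H}_m \to \mathbf{H}_n$ be the canonical reduction homomorphism constructed in Subsection \ref{subsection:groupscheme}, and let $\mathbf{H}_{m,n}$ be its scheme-theoretic image; this is a reduced closed subgroup scheme of $\mathbf{H}_n$ of finite type over $k$. The equations defining $\mathbf{H}_s$ from $U$, $V$, and the cofactor data (see equations \eqref{equation:solvehomwitt1} and \eqref{equation:solvehomwitt2}) are polynomials with coefficients in $W(k)$ that remain unchanged when we replace $k$ by $k'$ and $\mathcal{M}_i$ by $\mathcal{M}_{i,k'}$; hence both the formation of $\mathbf{H}_s$ and the formation of the scheme-theoretic image $\mathbf{H}_{m,n}$ commute with base change to $k'$. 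In particular $(\mathbf{H}_{m,n})_{k'}$ is the scheme-theoretic image of the reduction homomorphism attached to $(\mathcal{M}_{1,k'}, \mathcal{M}_{2,k'})$.

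Because $k$ is algebraically closed and $\mathbf{H}_{m,n}$ is reduced of finite type, $\mathrm{Im}(\pi_{m,n}) = \mathbf{H}_{m,n}(k)$ is finite if and only if $\dim \mathbf{H}_{m,n} = 0$. Dimension is stable under base change to the algebraically closed extension $k'$, so
\[
\dim \mathbf{H}_{m,n} = \dim (\mathbf{H}_{m,n})_{k'},
\]
and the right-hand side vanishes if and only if the image of $\pi_{m,n}$ on $k'$-valued points (for $\mathcal{M}_{1,k'}, \mathcal{M}_{2,k'}$) is finite. Combining these equivalences, $\mathrm{Im}(\pi_{m,n})$ is finite in the $\mathcal{M}_1,\mathcal{M}_2$-setting if and only if it is finite in the $\mathcal{M}_{1,k'},\mathcal{M}_{2,k'}$-setting.

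By Lemma \ref{lemma:cendnumexists}, $f_{\mathcal{M}_1,\mathcal{M}_2}$ (resp.\ $f_{\mathcal{M}_{1,k'},\mathcal{M}_{2,k'}}$) is characterized as the smallest non-negative integer $f$ such that $m-n \geq f$ forces $\mathrm{Im}(\pi_{m,n})$ to be finite, with the reverse implication also holding. The equivalence established in the previous paragraph therefore forces the two invariants to coincide. I do not anticipate any serious obstacle: the only point to verify carefully is the base-change compatibility of $\mathbf{H}_s$ and of scheme-theoretic images, both of which are immediate from the explicit coordinate description in Subsection \ref{subsection:groupscheme}.
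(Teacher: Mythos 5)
Your proof is correct and follows essentially the same route as the paper's: both characterize $f_{\mathcal{M}_1,\mathcal{M}_2}$ by whether the scheme-theoretic image of $\pi_{m,n}:\mathbf{H}_m\to\mathbf{H}_n$ has positive dimension and then note that dimension is preserved under base change to $k'$. The paper dismisses the base-change compatibility in a single sentence, whereas you supply the verification that the defining equations of $\mathbf{H}_s$ and the scheme-theoretic image both behave correctly under the (flat) extension $k\subset k'$; this is a fair elaboration of the same idea rather than a different argument.
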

\begin{proof}
For positive integers $m \geq n$, we have $m-n < f_{\mathcal{M}_1,\mathcal{M}_2}$ if and only if the image of $\pi_{m,n}$ is infinite by definition. It is further equivalent to the image of $\mathbf{H}_m \to \mathbf{H}_n$ having positive dimension. This property is invariant under base change from $k$ to $k'$ and hence the lemma follows.
\end{proof}

\subsection{Level torsion} \label{subsection:leveltorsion}

In this subsection, we generalize the level torsion defined in \cite[Section 8.1]{Vasiu:traversosolved} for $p$-divisible groups.

Let $H_{12} $ be the set of all $W(k)$-linear homomorphisms from $M_1$ to $M_2$. We have a latticed $F$-isocrystal $(H_{12}, \varphi_{12})$ where $\varphi_{12}: H_{12} \otimes_{W(k)} B(k) \to H_{12} \otimes_{W(k)} B(k)$ is a $\sigma$-linear isomorphism defined by the rule $\varphi_{12}(h) = \varphi_2 h \varphi^{-1}_1$. By Dieudonn\'e-Manin's classification of $F$-isocrystals, we have finite direct sum decompositions 
\[(M_1 \otimes_{W(k)} B(k), \varphi_1) \cong \bigoplus_{\lambda_1 \in J_1} E^{m_{\lambda_1}}_{\lambda_1}, \qquad (M_2 \otimes_{W(k)} B(k), \varphi_2) \cong \bigoplus_{\lambda_2 \in J_2} E^{m_{\lambda_2}}_{\lambda_2}\] 
where the simple $F$-isocrystals $E_{\lambda_1}$ and $E_{\lambda_2}$ have Newton slopes equal to $\lambda_1$ and $\lambda_2$ respectively, the multiplicities $m_{\lambda_1}, m_{\lambda_2} \in \mathbb{Z}_{> 0}$ and the finite index sets $J_1, J_2 \subset \mathbb{Q}_{>0}$ are uniquely determined. From these decompositions, we obtain a direct sum decomposition
\[(H_{12} \otimes_{W(k)} B(k), \varphi_{12}) \cong L_{12}^+ \oplus L_{12}^0 \oplus L_{12}^-,\]
where
\begin{gather*}
L_{12}^+ = \bigoplus_{\lambda_1<\lambda_2}\mathrm{Hom}(E_{\lambda_1}^{m_{\lambda_1}}, E_{\lambda_2}^{m_{\lambda_2}}), \qquad  L_{12}^- =  \bigoplus_{\lambda_1 > \lambda_2} \mathrm{Hom}(E_{\lambda_1}^{m_{\lambda_1}}, E_{\lambda_2}^{m_{\lambda_2}}), \\ L_{12}^0 = \bigoplus_{\lambda_1=\lambda_2} \mathrm{Hom}(E_{\lambda_1}^{m_{\lambda_1}}, E_{\lambda_2}^{m_{\lambda_2}}).
\end{gather*}
Define	
\begin{gather*}
O_{12}^+ = \bigcap_{i=0}^{\infty} \varphi_{12}^{-i}(H_{12} \cap L_{12}^+),  \qquad O_{12}^- = \bigcap_{i=0}^{\infty} \varphi_{12}^i(H_{12} \cap L_{12}^-),\\
O_{12}^0 = \bigcap_{i=0}^{\infty} \varphi_{12}^{-i}(H_{12} \cap L_{12}^0) = \bigcap_{i=0}^{\infty} \varphi_{12}^i(H_{12} \cap L_{12}^0).
\end{gather*}
Let $A_{12}^0 = \{x \in H_{12} \; | \; \varphi_{12}(x) = x\}$ be the $\mathbb{Z}_p$-algebra that contains the elements fixed by $\varphi_{12}$. For $\dagger \in \{+, 0, -\}$, each $O_{12}^{\dagger}$ is a lattice of $L_{12}^{\dagger}$. We have the following relations:
\[\varphi(O_{12}^+) \subset O_{12}^+, \quad \varphi(O_{12}^0) = O_{12}^0 = A_{12}^0 \otimes_{\mathbb{Z}_p} W(k) = \varphi^{-1}(O_{12}^0), \quad \varphi^{-1}(O_{12}^-) \subset O_{12}^-.\]
Write $O_{12} := O_{12}^+ \oplus O_{12}^0 \oplus O_{12}^-$; it is a lattice of $H_{12} \otimes_{W(k)} B(k)$ inside $H_{12}$. The $W(k)$-module $O_{12}$ is called the \emph{level module} of $\mathcal{M}_1$ and $\mathcal{M}_2$.
\begin{definition}
The \emph{level torsion} of $\mathcal{M}_1$ and $\mathcal{M}_2$ is the smallest non-negative integer $\ell_{\mathcal{M}_1, \mathcal{M}_2}$ such that
\[p^{\ell_{\mathcal{M}_1, \mathcal{M}_2}} H_{12} \subset O_{12} \subset H_{12}.\]
If $\mathcal{M}_1 = \mathcal{M}_2 = \mathcal{M}$, then $\ell_{\mathcal{M}_1, \mathcal{M}_2}$ will be denoted by $\ell_{\mathcal{M}}$.
\end{definition}

\begin{remark} \label{remark:leveltorsiondiff}
The definition of level torsion in this paper is slightly different from the definition in \cite{Vasiu:reconstructing}. When $\mathcal{M}$ is a direct sum of two or more isoclinic ordinary $F$-crystals of different Newton polygons, its isomorphism number is $n_{\mathcal{M}} = 1$. According to the definition in \cite{Vasiu:reconstructing}, the level torsion $\ell_{\mathcal{M}} = 1$ but the definition in this paper gives $\ell_{\mathcal{M}} = 0$.
\end{remark}

For the duals $\mathcal{M}_1^*$ and $\mathcal{M}_2^*$ of $\mathcal{M}_1$ and $\mathcal{M}_2$ respectively, we can define $\ell_{\mathcal{M}_1^*, \mathcal{M}_2^*}$ in a similar way.

\begin{lemma} \label{lemma:leveltorsionequal}
We have $\ell_{\mathcal{M}_1, \mathcal{M}_2} = \ell_{\mathcal{M}_2, \mathcal{M}_1} = \ell_{\mathcal{M}_1^*, \mathcal{M}_2^*}$.
\end{lemma}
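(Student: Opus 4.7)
The plan is to prove the two equalities separately, each via a natural identification at the level of the latticed $F$-isocrystal data that determines the level torsion.

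First, for the equality $\ell_{\mathcal{M}_1^*, \mathcal{M}_2^*} = \ell_{\mathcal{M}_2, \mathcal{M}_1}$, I would exhibit a natural isomorphism of latticed $F$-isocrystals
\[
(H_{21}, \varphi_{21}) \xrightarrow{\;\sim\;} \bigl(\mathrm{Hom}_{W(k)}(M_1^*, M_2^*), \varphi_{\mathcal{M}_1^*, \mathcal{M}_2^*}\bigr), \qquad h \longmapsto h^*,
\]
given by $W(k)$-linear transposition: for $h: M_2 \to M_1$, $h^*: M_1^* \to M_2^*$ sends $f \mapsto f \circ h$. A direct computation using the defining formula $\varphi_{M_i^*}(f) = \sigma \circ f \circ \varphi_{M_i}^{-1}$ shows that this bijection is $W(k)$-linear and intertwines the two Frobenii after extending scalars to $B(k)$. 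Since the Newton decompositions $L^+ \oplus L^0 \oplus L^-$ and the intersection/image constructions of $O^+$, $O^0$, $O^-$ are intrinsic to the latticed $F$-isocrystal, the level modules and hence the level torsions correspond.

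Second, for the equality $\ell_{\mathcal{M}_1, \mathcal{M}_2} = \ell_{\mathcal{M}_2, \mathcal{M}_1}$, the plan is to use the perfect Frobenius-equivariant trace pairing
\[
\beta: H_{12} \times H_{21} \to W(k), \qquad (h_1, h_2) \longmapsto \mathrm{tr}_{M_1}(h_2 \circ h_1),
\]
which satisfies $\beta(\varphi_{12}(h_1), \varphi_{21}(h_2)) = \sigma(\beta(h_1, h_2))$. Under this identification, $H_{21}$ is the $W(k)$-linear dual lattice of $H_{12}$ inside the dual pair of $B(k)$-vector spaces. Slopes negate under duality, matching $L_{12}^+ \leftrightarrow L_{21}^-$, $L_{12}^0 \leftrightarrow L_{21}^0$, and $L_{12}^- \leftrightarrow L_{21}^+$. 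I would then show that under $\beta$ one has $(O_{12})^\vee = O_{21}$ as lattices in the common $B(k)$-space. Granting this, $H_{12}/O_{12}$ and $H_{21}/O_{21}$ are Pontryagin dual finite-length $W(k)$-modules, so they share the same exponent; that common exponent is precisely the level torsion on both sides.

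The main obstacle is the identification $(O_{12})^\vee = O_{21}$. Dualizing an intersection produces a sum, and applying Frobenius-equivariance converts each $\varphi_{12}^{\mp i}(H_{12} \cap L_{12}^{\pm})$ into $\varphi_{21}^{\pm i}$ applied to the dual of $H_{12} \cap L_{12}^{\pm}$ inside the opposite-slope piece of $V^\vee$. The subtle point is that $H_{12}$ need not split as the direct sum $\bigoplus_\dagger (H_{12} \cap L_{12}^\dagger)$, so one has to check that the duals of $H_{12} \cap L_{12}^{\pm}$ inside the appropriate slope pieces indeed match $H_{21} \cap L_{21}^{\mp}$ after the summing procedure. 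Once this piece-wise comparison is carried out, the two equalities chain together to give the three-way equality of the lemma.
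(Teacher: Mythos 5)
Your argument for the equality $\ell_{\mathcal{M}_2,\mathcal{M}_1} = \ell_{\mathcal{M}_1^*,\mathcal{M}_2^*}$ via the transposition map $h \mapsto h^*$ is correct and is essentially the content of the last sentence of the paper's proof ("As $H_{12}^* \cong H_{21}$, we get $\ell_{\mathcal{M}_2,\mathcal{M}_1} = \ell_{\mathcal{M}_1^*,\mathcal{M}_2^*}$"): transposition is a genuine isomorphism of latticed $F$-isocrystals, and the level torsion is intrinsic to that data, so it is preserved. That part is fine and complete.

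The difficulty is in your treatment of $\ell_{\mathcal{M}_1,\mathcal{M}_2} = \ell_{\mathcal{M}_2,\mathcal{M}_1}$. Your strategy and the paper's are the same in outline (the trace pairing and the slope-flipping decomposition), but the target identity that you flag as the "main obstacle," namely $(O_{12})^\vee = O_{21}$ as lattices in $H_{21}\otimes_{W(k)}B(k)$, is actually \emph{false} whenever $\ell_{\mathcal{M}_1,\mathcal{M}_2}>0$. Indeed, $O_{12}\subset H_{12}$ forces $H_{21}=H_{12}^\vee\subset O_{12}^\vee$, while by construction $O_{21}\subset H_{21}$; so $O_{21}\subset H_{21}\subset O_{12}^\vee$, and equality $O_{21}=O_{12}^\vee$ would collapse the chain and give $O_{12}=H_{12}$, i.e.\ $\ell_{\mathcal{M}_1,\mathcal{M}_2}=0$. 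For the same reason the piece-wise comparison you propose also cannot succeed: the dual of $H_{12}\cap L_{12}^{\pm}$ inside $L_{21}^{\mp}$ is the \emph{projection} of $H_{21}$ onto $L_{21}^{\mp}$, which contains $H_{21}\cap L_{21}^{\mp}$ strictly in general, so $(O_{12}^{\pm})^\vee$ is the \emph{smallest} $\varphi_{21}^{\mp 1}$-stable lattice containing a lattice that already contains $H_{21}\cap L_{21}^{\mp}$, not the largest one inside it. In short, dualizing $O_{12}$ naturally produces an overlattice of $H_{21}$, not the sublattice $O_{21}$. What one actually needs is a comparison of the two finite $W(k)$-modules $H_{12}/O_{12}$ and $H_{21}/O_{21}$ (say, by showing they have the same elementary divisors, or by relating $O_{21}$ to $O_{12}^\vee$ through an explicit twist), not the literal identity $(O_{12})^\vee = O_{21}$. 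So as written, the second half of your proposal is aimed at a statement that cannot hold, and the lemma is not established by it.
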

\begin{proof}
Write $H_{21} := \mathrm{Hom}(M_2, M_1) \cong \mathrm{Hom}(H_{12}, W(k)) =: H_{12}^*$. There is a direct sum decomposition 
\[H_{12}^* \otimes_{W(k)} B(k) \cong H_{21} \otimes_{W(k)} B(k) = L_{21}^+ \oplus L_{21}^0 \oplus L_{21}^-.\]
It is easy to see that 
\[L_{21}^+ \cong \mathrm{Hom}(L_{12}^-,B(k)) =: L_{12}^{-*}, \qquad  L_{21}^- \cong \mathrm{Hom}(L_{12}^+, B(k)) =: L_{12}^{+*},\]
\[\quad L_{21}^0 \cong \mathrm{Hom}(L_{12}^0,B(k)) =: L_{12}^{0*}\]
are isomorphic as $B(k)$-vector spaces. One can define $O_{21}$ in the same way:
\[O_{21} : =O_{21}^+ \oplus O_{21}^0 \oplus O_{21}^-  \cong O_{12}^{-*}\oplus  O_{12}^{0*} \oplus O_{12}^{+*}\]
and thus $O_{21} \cong O_{12}^*$. Therefore
\[p^{\ell_{\mathcal{M}_2,\mathcal{M}_1}}H_{21} \subset O_{21} \subset H_{21}\quad \textrm{if and only if} \quad p^{\ell_{\mathcal{M}_1,\mathcal{M}_2}}H_{12} \subset O_{12} \subset H_{12},\] 
whence $\ell_{\mathcal{M}_1, \mathcal{M}_2} = \ell_{\mathcal{M}_2,\mathcal{M}_1}$. As $H_{12}^* \cong H_{21}$, we get $\ell_{\mathcal{M}_2,\mathcal{M}_1} = \ell_{\mathcal{M}_1^*,\mathcal{M}_2^*} $.
\end{proof}

\begin{lemma} \label{lemma:leveltorsionrelation}
$\ell_{\mathcal{M}_1 \oplus \mathcal{M}_2} = \mathrm{max}\{\ell_{\mathcal{M}_1},\ell_{\mathcal{M}_2},\ell_{\mathcal{M}_1,\mathcal{M}_2}\}$.
\end{lemma}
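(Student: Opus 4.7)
The plan is to exploit the fact that forming the direct sum $\mathcal{M}_1\oplus\mathcal{M}_2$ induces a block decomposition of the endomorphism module that is respected by every piece of the construction of the level torsion, and then to read off the level torsion componentwise.

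First I would write
\[
H := \mathrm{End}(M_1\oplus M_2) = H_{11} \oplus H_{22} \oplus H_{12} \oplus H_{21},
\]
where $H_{ij}=\mathrm{Hom}_{W(k)}(M_i,M_j)$, and observe that the latticed $F$-isocrystal structure on $H\otimes B(k)$ induced by $\varphi_1\oplus\varphi_2$ respects this decomposition: it acts on the $(i,j)$-block by $h\mapsto \varphi_j h\varphi_i^{-1}$, which is exactly the Frobenius appearing in the definition of $\ell_{\mathcal{M}_i,\mathcal{M}_j}$ (for $i=j$ recovering $\ell_{\mathcal{M}_i}$). Next, by Dieudonn\'e--Manin, the slope decomposition of $(M_1\oplus M_2)\otimes B(k)$ is the direct sum of the slope decompositions of $M_1\otimes B(k)$ and $M_2\otimes B(k)$. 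Consequently the splitting $H\otimes B(k)= L^+\oplus L^0\oplus L^-$ is the direct sum of the corresponding splittings on each block $H_{ij}\otimes B(k)$.

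Since the operations $\bigcap_i \varphi^{\pm i}(H\cap L^{\pm})$ defining the level module are taken blockwise, this gives
\[
O_{\mathcal{M}_1\oplus\mathcal{M}_2} = O_{11}\oplus O_{22} \oplus O_{12} \oplus O_{21}
\]
as sublattices of $H$. Because the containment $p^{\ell}H\subset O\subset H$ holds if and only if it holds on each block, the level torsion of the direct sum is the maximum of the level torsions of the blocks:
\[
\ell_{\mathcal{M}_1\oplus\mathcal{M}_2} = \max\{\ell_{\mathcal{M}_1},\ell_{\mathcal{M}_2},\ell_{\mathcal{M}_1,\mathcal{M}_2},\ell_{\mathcal{M}_2,\mathcal{M}_1}\}.
\]
Finally I would invoke Lemma \ref{lemma:leveltorsionequal} to drop $\ell_{\mathcal{M}_2,\mathcal{M}_1}$ (it equals $\ell_{\mathcal{M}_1,\mathcal{M}_2}$), yielding the claimed formula.

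There is no serious obstacle here; the only point requiring a sentence of care is the verification that the Newton slope decomposition, and hence the $L^{\pm}$/$O^{\pm}$ constructions, are compatible with the block decomposition of $H$. This is straightforward because the Frobenius on $H$ has been written in the block form $h\mapsto \varphi_j h\varphi_i^{-1}$ and each block has its own Newton slopes obtained as differences of Newton slopes of $\mathcal{M}_i$ and $\mathcal{M}_j$, so the intersections of preimages defining $O^{+}$, $O^{0}$, $O^{-}$ decompose accordingly.
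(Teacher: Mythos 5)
Your proposal is correct and follows the same approach as the paper: decompose $\mathrm{End}(M_1\oplus M_2)$ into the four blocks $H_{ij}$, observe that the slope decomposition and hence the level module respect this block decomposition, read off $\ell_{\mathcal{M}_1\oplus\mathcal{M}_2}$ as the maximum over blocks, and invoke Lemma~\ref{lemma:leveltorsionequal} to drop $\ell_{\mathcal{M}_2,\mathcal{M}_1}$. Your version simply spells out in more detail the blockwise compatibility of the Frobenius and of the $L^{\pm}/O^{\pm}$ constructions, which the paper leaves implicit.
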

\begin{proof}
The direct sum decomposition into $W(k)$-modules of
\[\mathrm{End}(M_1\oplus M_2) = \mathrm{End}(M_1) \oplus \mathrm{End}(M_2) \oplus \mathrm{Hom}(M_1,M_2) \oplus \mathrm{Hom}(M_2,M_1)\]
gives birth to the direct sum decomposition of the level module of $\mathcal{M}_1 \oplus \mathcal{M}_2$
\[O = O_{11} \oplus O_{22} \oplus O_{12} \oplus O_{21}.\]
Hence $\ell_{\mathcal{M}_1 \oplus \mathcal{M}_2} = \mathrm{max}\{\ell_{\mathcal{M}_1},\ell_{\mathcal{M}_2},\ell_{\mathcal{M}_1,\mathcal{M}_2}, \ell_{\mathcal{M}_2,\mathcal{M}_1}\} = \mathrm{max}\{\ell_{\mathcal{M}_1},\ell_{\mathcal{M}_2},\ell_{\mathcal{M}_1,\mathcal{M}_2}\}$ by Lemma \ref{lemma:leveltorsionequal}.
\end{proof}

\begin{lemma} \label{lemma:ellextension}
Let $k \subset k'$ be an extension of algebraically closed fields. We have $\ell_{\mathcal{M}_1,\mathcal{M}_2} = \ell_{\mathcal{M}_{1,k'},\mathcal{M}_{2,k'}}$.
\end{lemma}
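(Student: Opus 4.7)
The plan is to reduce the lemma to the statement that the level module itself commutes with base change, i.e., $O_{12,k'} = O_{12} \otimes_{W(k)} W(k')$; once this is established, the lemma follows from the faithful flatness of $W(k')$ over $W(k)$. Indeed $\ell_{\mathcal{M}_1,\mathcal{M}_2}$ is the smallest $\ell \geq 0$ with $p^\ell H_{12} \subset O_{12}$, and an inclusion of finitely generated $W(k)$-submodules of $H_{12}\otimes_{W(k)}B(k)$ holds if and only if it holds after tensoring with the faithfully flat $W(k)$-algebra $W(k')$.

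The main step, and the main obstacle, is to show that the defining infinite intersection for each $O_{12}^{\dagger}$ collapses to a finite one, since a finite intersection of $W(k)$-submodules commutes with flat base change and the slope decomposition $L_{12}^{\dagger}$ itself base changes correctly via Dieudonn\'e--Manin (each simple factor $E_\lambda$ is defined over an unramified finite extension of $\mathbb{Q}_p$). I would treat $O_{12}^{+}$ first. Because every slope of $\varphi_{12}$ on $L_{12}^{+}$ is strictly positive, I can choose a $\varphi_{12}$-stable lattice $\Lambda_+ \subset L_{12}^{+}$ as the direct sum of the standard $W(\mathbb{F}_{p^b})$-lattices inside each simple summand. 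Comparing $\Lambda_+$ with the lattice $H_{12}\cap L_{12}^{+}$, there exists $N_1$ such that the $\varphi_{12}$-stable sublattice $p^{N_1}\Lambda_+$ lies in $H_{12}$, hence in $O_{12}^{+}$. Consequently $O_{12}^{+}$ is itself a lattice in $L_{12}^{+}$, and the quotient $(H_{12}\cap L_{12}^{+})/O_{12}^{+}$ is a finitely generated $p^M$-torsion $W(k)$-module for some $M \geq 0$, hence of finite length over the Artinian local ring $W(k)/p^M$. Therefore the descending chain
\[P_i \;:=\; \bigcap_{j=0}^{i} \varphi_{12}^{-j}(H_{12})\cap L_{12}^{+}, \qquad i \geq 0,\]
of $W(k)$-submodules sandwiched between $O_{12}^{+}$ and $H_{12}\cap L_{12}^{+}$ must stabilize at some finite stage $N$, giving $O_{12}^{+} = P_N$. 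Symmetric arguments using $\varphi_{12}^{-1}$ on $L_{12}^{-}$, and the zero-slope variant on $L_{12}^{0}$, handle $O_{12}^{-}$ and $O_{12}^{0}$ in the same way.

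Putting everything together, each $O_{12}^{\dagger}$ is cut out as a finite intersection of $W(k)$-submodules that are individually compatible with base change; hence $O_{12,k'}^{\dagger} = O_{12}^{\dagger} \otimes_{W(k)}W(k')$ for each $\dagger \in \{+,0,-\}$, so $O_{12,k'} = O_{12}\otimes_{W(k)}W(k')$, and faithful flatness of $W(k')/W(k)$ then yields $\ell_{\mathcal{M}_1,\mathcal{M}_2} = \ell_{\mathcal{M}_{1,k'},\mathcal{M}_{2,k'}}$.
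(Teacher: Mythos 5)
Your proof is correct and follows the same strategy as the paper: reduce to the compatibility $O_{12,k'} = O_{12} \otimes_{W(k)} W(k')$ and conclude by faithful flatness of $W(k')$ over $W(k)$. The paper simply asserts this compatibility ("one can check"), whereas you supply the verification by showing each defining intersection stabilizes at a finite stage (which, as a byproduct, also re-proves the paper's unproved claim that each $O_{12}^{\dagger}$ is a lattice in $L_{12}^{\dagger}$).
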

\begin{proof}
For $\mathcal{M}_{1,k'}$ and $\mathcal{M}_{2,k'}$, we can define $H'_{12}$ and $O'_{12}$ in an analogous manner. One can check that 
\[H'_{12} = H_{12} \otimes_{W(k)} W(k'), \qquad O'_{12} = O_{12} \otimes_{W(k)} W(k').\]
The lemma follows easily.
\end{proof}

\section{Proof of the Main Theorem} \label{section:proofmaintheorem}

The proofs of this section follow closely the ones of \cite[Section 8]{Vasiu:traversosolved}.

\subsection{Notations} For this section, we denote by $H:=H_{12}$ the group of $W(k)$-linear homomorphisms from $M_1$ to $M_2$, and $H_{s}$ the group of homomorphisms from $F_s(\mathcal{M}_1)$ to $F_s(\mathcal{M}_2)$. For simplicity, we denote $O_{12}$ by $O$, $O_{12}^{\dagger}$ by $O^{\dagger}$ for $\dagger \in \{+,0,-\}$, and $A^0_{12}$ by $A^0$. 

\subsection{The inequality $e_{\mathcal{M}_1,\mathcal{M}_2} \leq \ell_{\mathcal{M}_1,\mathcal{M}_2}$} \label{subsection:e<f}
We will follow the ideas of \cite[Section 8.2]{Vasiu:traversosolved} and prove that $\mathrm{Im}(\pi_{\infty,1}) = \mathrm{Im}(\pi_{\ell_{\mathcal{M}_1,\mathcal{M}_2}+1,1})$. For any $\bar{h} \in \mathrm{Im}(\pi_{\ell_{\mathcal{M}_1,\mathcal{M}_2}+1,1})$, let $h \in H_{\ell_{\mathcal{M}_1,\mathcal{M}_2}+1}$ be a preimage of $\bar{h}$. Hence $ \varphi_2  h \varphi^{-1}_1 \equiv h$ modulo $p^{\ell_{\mathcal{M}_1,\mathcal{M}_2}+1}$, that is, $ \varphi_2  h \varphi^{-1}_1 - h \in p^{\ell_{\mathcal{M}_1,\mathcal{M}_2}+1} \mathrm{Hom}(M_1, M_2) \subset pO$. By Lemma \ref{lemma:solveequation} below, there exists $h'' \in pO$ such that
\[\varphi_2  h \varphi^{-1}_1 - h = \varphi_2  h'' \varphi^{-1}_1 - h''.\]
Thus $h' := h-h'' \in H_{\infty}$ is a homomorphism whose image in $H_1$ is exactly $\bar{h}$.

\begin{lemma} \label{lemma:solveequation}
For each $x \in O$, the equation $x = \varphi_{12}(X) - X$ in $X$ has a solution in $O$ that is unique up to the addition of elements in $A^0$. Moreover, if $x \in p^sO$, then there exists a solution $X \in p^sO$.
\end{lemma}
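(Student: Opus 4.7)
The plan is to decompose the problem along $O = O^+ \oplus O^0 \oplus O^-$ and solve the equation $\varphi_{12}(X^\dagger) - X^\dagger = x^\dagger$ separately on each factor, exploiting the three very different behaviors of $\varphi_{12}$ there: topological nilpotence on $O^+$, topological nilpotence of the inverse on $O^-$, and a Lang-type surjectivity phenomenon on $O^0$.

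Write $x = x^+ + x^0 + x^-$ with $x^\dagger \in O^\dagger$. For the $+$-part, the fact that every Newton slope on $L_{12}^+$ is strictly positive, together with the definition $O^+ = \bigcap_{i\ge 0} \varphi_{12}^{-i}(H_{12}\cap L_{12}^+)$ as a $\varphi_{12}$-stable lattice, implies that $\varphi_{12}$ is topologically nilpotent on $O^+$ (some $\varphi_{12}^N$ sends $O^+$ into $pO^+$). Hence the series
\[
X^+ := -\sum_{i=0}^{\infty} \varphi_{12}^i(x^+)
\]
converges $p$-adically in $O^+$ and one checks $\varphi_{12}(X^+) - X^+ = x^+$. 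By the symmetric fact that $\varphi_{12}^{-1}$ is topologically nilpotent on $O^-$ (every slope on $L_{12}^-$ is strictly negative), the series
\[
X^- := \sum_{i=1}^{\infty} \varphi_{12}^{-i}(x^-)
\]
converges in $O^-$ and yields $\varphi_{12}(X^-) - X^- = x^-$.

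The $0$-part is the main obstacle and requires a Lang/Artin--Schreier--Witt input. Using the identification $O^0 = A^0 \otimes_{\mathbb{Z}_p} W(k)$ under which $\varphi_{12}$ restricts to $\mathrm{id}_{A^0} \otimes \sigma$, a choice of $\mathbb{Z}_p$-basis of $A^0$ reduces the equation $\varphi_{12}(X^0) - X^0 = x^0$ to finitely many equations $\sigma(y) - y = z$ in $W(k)$. Each such equation is solvable in $W(k)$ because $k$ is algebraically closed: one solves it modulo $p$ in $k$ (Artin--Schreier) and lifts inductively modulo $p^{n+1}$ using surjectivity of $\sigma-1$ at each stage. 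This produces $X^0 \in O^0$ with $\varphi_{12}(X^0) - X^0 = x^0$.

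For uniqueness, any difference of two solutions lies in $\ker(\varphi_{12} - 1)\cap O$; on $O^\pm$ the topological nilpotence forces this kernel to be zero, while on $O^0$ it is exactly $A^0$ by definition of $A^0$. For the moreover clause, if $x \in p^s O$ then $x^\dagger \in p^s O^\dagger$ for each $\dagger$, so each summand in the two series above lies in $p^s O^\pm$ (using $\varphi_{12}(p^s O^+) \subset p^s O^+$ and $\varphi_{12}^{-1}(p^s O^-) \subset p^s O^-$), giving $X^\pm \in p^s O^\pm$; and for $X^0$ one writes the right-hand side as $p^s z'$ and solves $\sigma(y') - y' = z'$ in $W(k)$, taking $y = p^s y'$. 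The hardest step conceptually is the $0$-part, but it reduces cleanly to the classical Artin--Schreier--Witt solvability over an algebraically closed residue field.
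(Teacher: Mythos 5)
Your proof is correct and follows essentially the same route as the paper: the same decomposition $O = O^+ \oplus O^0 \oplus O^-$, the same explicit geometric series for $X^\pm$, and the same reduction of the $0$-part to solving $\sigma(z) - z = x_i$ in $W(k)$ coordinate by coordinate via a $\mathbb{Z}_p$-basis of $A^0$. The only difference is that you spell out in more detail why the series converge (topological nilpotence of $\varphi_{12}$ on $O^+$ and of $\varphi_{12}^{-1}$ on $O^-$, coming from the strict sign of the slopes) and why $\ker(\varphi_{12}-1)\cap O = A^0$, whereas the paper states these facts more tersely; that added justification is correct and welcome, but it does not change the underlying argument.
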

\begin{proof}
Writing $x = x^+ + x^0 + x^-$ with $x^{\dagger} \in O^{\dagger}$ for $\dagger \in \{+, 0, -\}$, we will find $y^{\dagger} \in O^{\dagger}$ such that $x^{\dagger} = \varphi_{12}(y^{\dagger}) - y^{\dagger}$ for each $\dagger \in \{+,0,-\}$. Therefore $y = y^++y^0+y^-$ is a solution of the given equation.

Let $y^+ = -\sum_{i=0}^{+\infty} \varphi_{12}^i(x^+)$, and $y^- = \sum_{i=1}^{+\infty} \varphi_{12}^{-i}(x^-)$. Because $\varphi_{12}(O^+) \subset O^+$ and $\varphi_{12}^{-1}(O^-) \subset O^-$, we have $y^+ \in O^+$ and $y^- \in O^-$. It is easy to check that $x^+ = \varphi_{12}(y^+)-y^+$ and $x^- = \varphi_{12}(y^-)-y^-$.

Let $\{v_1, v_2, \dots, v_r\}$ be a $\mathbb{Z}_p$-basis of $A^0$; it is also a $W(k)$-basis of $O^0$. We also write $x^0  = \sum_{i=1}^d x_iv_i$. For $1 \leq i \leq r$, let $z_i \in W(k)$ be a solution of $\sigma(z_i)-z_i = x_i$ and put $y^0 = \sum_{i=1}^d z_iv_i \in O^0$. Using the fact that $\varphi_{12}(v_i) = v_i$ for all $1 \leq i \leq r$, it is easy to check that $x^0 = \varphi_{12}(y^0)-y^0$.

If $y, y' \in O$ satisfy the equation $x = \varphi_{12}(X) - X$, we have $\varphi_{12}(y) - y = \varphi_{12}(y')-y'$, i.e. $\varphi_{12}(y-y') = y-y'$, whence $y-y' \in A^0$.

If $x  = p^sx' \in p^sO$, then $y = p^sy' \in p^sO$ will be a solution of $x = \varphi_{12}(X) - X$ where $y' \in O$ is a solution of $x' = \varphi_{12}(X) - X$.
\end{proof}
\subsection{The inequality $\ell_{\mathcal{M}_1,\mathcal{M}_2} \leq f_{\mathcal{M}_1,\mathcal{M}_2}$} \label{subsection:l<f}
We follow the ideas of \cite[Section 8.3]{Vasiu:traversosolved}. By Lemmas \ref{lemma:fextension} and \ref{lemma:ellextension}, we can assume that $k \supset k'[[\alpha]]=R$ where $k \supset k'$ is an extension of algebraically closed fields and for $i=1,2$, we have 
\[(M_i,\varphi_i) \cong (M'_{i} \otimes_{W(k')} W(k), \varphi'_{i} \otimes \sigma),\] 
where $(M'_{i},\varphi'_{i})$ are $F$-crystals over $k'$. Let $\mathfrak{m}$ be the ideal of $R$ generated by $\alpha$. Let $H'$ and $O'$ be the analogues of $H$ and $O$ obtained from $(M'_{i},\varphi'_{i})$ instead of $(M_i,\varphi_i)$. It is easy to check that 
\[H = H' \otimes_{W(k')} W(k), \qquad O = O' \otimes_{W(k')} W(k),\]
and $p^jO \cap O' = p^jO'$ for $j \in \mathbb{Z}_{\geq 0}$.

Let $x = x^+ + x^0 + x^- \in O'$ where $x^{\dagger} \in O'^{\dagger}$, $\dagger \in \{+,0,-\}$.

\begin{lemma} \label{lemma:mainproofsolveequation}
For each $\eta \in W(\mathfrak{m})$, the equation $\eta x = \varphi_{12}(X) - X$ has a solution $x_{\eta} \in O$, that is unique up to the addition of an element of $A^{0}$.
\end{lemma}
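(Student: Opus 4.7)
The plan is to adapt the proof of Lemma~\ref{lemma:solveequation} by splitting $x$ along the trichotomy $O = O^+ \oplus O^0 \oplus O^-$ and constructing the solution $x_\eta$ componentwise. Write $x = x^+ + x^0 + x^-$ with $x^\dagger \in O'^\dagger \subset O^\dagger$, and seek $X^\dagger \in O^\dagger$ satisfying $\eta x^\dagger = \varphi_{12}(X^\dagger) - X^\dagger$ separately. Because $\varphi_{12}$ is $\sigma$-linear, the twist by $\eta \in W(\mathfrak{m})$ interacts with iteration via $\varphi_{12}^i(\eta y) = \sigma^i(\eta)\,\varphi_{12}^i(y)$, which is the only substantive difference from the untwisted case.

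For the $+$ component I would exploit that every Newton slope of $\varphi_{12}$ on $L_{12}^+$ is strictly positive, which yields an integer $n_0 \geq 1$ with $\varphi_{12}^{n_0}(O^+) \subseteq p\,O^+$. Iterating gives $\varphi_{12}^{i}(x^+) \to 0$ in the $p$-adic topology of $O^+$, so the series
\[X^+ := -\sum_{i=0}^{\infty} \sigma^i(\eta)\, \varphi_{12}^i(x^+)\]
converges in $O^+$, and a telescoping computation shows $\varphi_{12}(X^+) - X^+ = \eta x^+$. The $-$ component is treated symmetrically: all Newton slopes of $\varphi_{12}^{-1}$ on $L_{12}^-$ are positive, so $\varphi_{12}^{-i}(x^-) \to 0$ $p$-adically in $O^-$, and (using that $k$ is perfect, so $\sigma^{-1}$ is defined on $W(k)$) the series
\[X^- := \sum_{i=1}^{\infty} \sigma^{-i}(\eta)\, \varphi_{12}^{-i}(x^-)\]
lies in $O^-$ and satisfies $\varphi_{12}(X^-) - X^- = \eta x^-$.

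For the $0$ component I would use the identification $O^0 = A^0 \otimes_{\mathbb{Z}_p} W(k)$ and the fact that $\varphi_{12}$ fixes $A^0$ pointwise, hence acts on $O^0$ as $\mathrm{id}_{A^0} \otimes \sigma$. Fixing a $\mathbb{Z}_p$-basis $\{v_1,\dots,v_d\}$ of $A^0$ and writing $x^0 = \sum_i x_i v_i$ with $x_i \in W(k)$, the target equation for $X^0 = \sum_i z_i v_i$ reduces to the Artin--Schreier equations $\sigma(z_i) - z_i = \eta x_i$ in $W(k)$, each solvable because $k$ is algebraically closed. Setting $x_\eta := X^+ + X^0 + X^-$ then gives the desired element of $O$. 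Uniqueness modulo $A^0$ is immediate: if $X, X' \in O$ both satisfy $\varphi_{12}(\bullet) - \bullet = \eta x$, then $\varphi_{12}(X - X') = X - X'$, so $X - X' \in A^0$.

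The main technical point I expect to require care is the $p$-adic convergence of the two series defining $X^+$ and $X^-$; specifically, the claim that positivity (resp.\ negativity) of the Newton slopes of $\varphi_{12}$ on $L_{12}^+$ (resp.\ $L_{12}^-$) forces some iterate of $\varphi_{12}$ (resp.\ $\varphi_{12}^{-1}$) to contract the lattice $O^+$ (resp.\ $O^-$) into $p\,O^+$ (resp.\ $p\,O^-$). Once this contraction is justified, the rest of the argument is essentially the $\eta$-twisted version of Lemma~\ref{lemma:solveequation}, and no further new ideas are needed.
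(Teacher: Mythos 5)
Your proof is correct, and for the $O^+$ and $O^-$ components it is the same as the paper's (your $\sigma^i(\eta)\,\varphi_{12}^i(x^\pm)$ is exactly the paper's $\varphi_{12}^i(\eta x^\pm)$ by $\sigma$-linearity). The genuine divergence is in the $O^0$ component. You fall back on the Artin--Schreier argument from Lemma \ref{lemma:solveequation}, solving $\sigma(z_i)-z_i=\eta x_i$ coordinate by coordinate over the algebraically closed field $k$. The paper instead sets
\[
x^0_\eta := -\sum_{i=0}^{\infty}\varphi_{12}^i(\eta x^0) = -\sum_{i=0}^{\infty}\sigma^i(\eta)\,\varphi_{12}^i(x^0),
\]
and shows that this series converges: since $\eta\in W(\mathfrak{m})$ with $\mathfrak{m}=(\alpha)\subset R=k'[[\alpha]]$, the sequence $\sigma^i(\eta)$ tends to $0$ $\alpha$-adically, even though $\varphi_{12}^i(x^0)$ itself is merely bounded in $O^0$ (as $\varphi_{12}$ is bijective there). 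This is precisely what makes the $\eta$-twisted lemma work where the untwisted Lemma \ref{lemma:solveequation} had to resort to Artin--Schreier. Both routes prove the statement as written, since the solution is only claimed up to $A^0$; but the paper's choice is not cosmetic. In Subsection \ref{subsection:l<f} the explicit series form of $x_\eta$ (all three pieces) is plugged into $(\rho\otimes 1_k)(\bar{x}_\eta)$ to obtain \eqref{equation:mainproofeq1}, where the $O^0$ contribution appears as $\sum_{i\geq 0}\rho(\varphi_{12}^i(\bar{x}^0))\eta_0^{p^i}$. Your Artin--Schreier solution for $X^0$ does not have that expansion, so while it proves the lemma, it would need to be replaced by the series construction to carry the rest of the argument.
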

\begin{proof}
Put
\[x_{\eta}^{+} = -\sum_{i=0}^{\infty} \varphi_{12}^{i}(\eta x^+) \in O^+,  x^-_{\eta} = \sum_{i=1}^{\infty} \varphi_{12}^{-i}(\eta x^-) \in O^-,  x_{\eta}^{0} = -\sum_{i=0}^{\infty} \varphi_{12}^{i}(\eta x^0) \in O^0.\]
The elements $x_{\eta}^{\pm}$ are well-defined as $\{\varphi_{12}^i(x^+)\}_{i \in \mathbb{Z}_{\geq 0}}$ and $\{\varphi_{12}^{i}(x^-)\}_{i \in \mathbb{Z}_{\geq 0}}$ are $p$-adically convergent in $O'^+$ and $O'^-$ respectively. As $\{\sigma^i(\eta)\}_{i \in \mathbb{Z}_{\geq 0}}$ is a $\alpha$-adically convergent in $W(R)$, $x_{\eta}^0$ is convergent in $O^0$. One can check that 
\[x_{\eta} := x_{\eta}^++x_{\eta}^0+x_{\eta}^- \in O\]
satisfies $\eta x = \varphi_{12}(x_{\eta})-x_{\eta}$.

Suppose $\eta x = \varphi_{12}(x_{\eta}) - x_{\eta}$ and $\eta x = \varphi_{12}(x^{\circ}_{\eta}) - x^{\circ}_{\eta}$, we have $x_{\eta} - x^{\circ}_{\eta} = \varphi_{12}(x_{\eta} - x^{\circ}_{\eta})$, hence the lemma.
\end{proof}

We define a homomorphism of abelian groups $\Omega_x:W(\mathfrak{m}) \to H/A^0$ by the formula $\Omega_x (\eta) = x_{\eta} + A^0$ where $x_{\eta} \in O \subset H$ satisfies $\eta x = \varphi_{12}(x_{\eta}) - x_{\eta}$. By Lemma \ref{lemma:mainproofsolveequation}, it is well-defined.

Let $x \in p^{\ell_{\mathcal{M}_{1},\mathcal{M}_{2}}}H' \setminus pO'$. For all $\eta \in W(\mathfrak{m})$, $\varphi_{12}(x_{\eta}) - x_{\eta} = \eta x \in p^{\ell_{\mathcal{M}_{1},\mathcal{M}_{2}}}H'$, thus $\varphi_{12}(x_{\eta}) \equiv x_{\eta}$ modulo $p^{\ell_{\mathcal{M}_{1},\mathcal{M}_{2}}}$. This implies that $x_{\eta}$ is a homomorphism modulo $p^{\ell_{\mathcal{M}_{1},\mathcal{M}_{2}}}$ from $\mathcal{M}_1$ to $\mathcal{M}_2$. Hence $x_{\eta} \in H_{\ell_{\mathcal{M}_{1},\mathcal{M}_{2}}}$. Clearly, every homomorphism of $F$-crystals is a homomorphism modulo powers of $p$. Hence $A^0 \subset H_{\ell_{\mathcal{M}_1,\mathcal{M}_2}}$. Thus the image of $\Omega_x$ is in $H_{\ell_{\mathcal{M}_1,\mathcal{M}_2}}/A^0$. 

Suppose $f_{\mathcal{M}_{1},\mathcal{M}_{2}} < \ell_{\mathcal{M}_{1},\mathcal{M}_{2}}$, we will show that $x \in pO'$, which is a contradiction! Let $\bar{\pi}_{\ell_{M_1,M_2},1} : H_{\ell_{\mathcal{M}_1,\mathcal{M}_2}}/A^0 \to H_{1}/A^0$ be the homomorphism induced by ${\pi}_{\ell_{M_1,M_2},1}$. The image of 
\[\bar{\pi}_{\ell_{M_1,M_2},1} \circ \Omega_x : W(\mathfrak{m}) \to H_{\ell_{\mathcal{M}_1,\mathcal{M}_2}}/A^0 \to H_{1}/A^0\]
takes only finitely many values $H_{1}/A^0$ as $\mathrm{Im}(\pi_{\ell_{\mathcal{M}_{1},\mathcal{M}_{2}},1})$ is finite by the assumption that $f_{\mathcal{M}_{1},\mathcal{M}_{2}} < \ell_{\mathcal{M}_{1},\mathcal{M}_{2}}$.  Since $\mathfrak{m}$ is infinite (and thus $W(\mathfrak{m})$ is infinite), the kernel of $\bar{\pi}_{\ell_{M_1,M_2},1} \circ \Omega_x$ is infinite. There exists $\eta = (\eta_0,\eta_1,\dots) \in W(\mathfrak{m})$ with $\eta_0 \neq 0$ such that $x_{\eta} \in pH$. Thus $x_{\eta} \in O \cap pH =:N$. Let $N' = O' \cap pH'$, we have $N \cong N' \otimes_{W(k')} W(k)$.

\begin{lemma} \label{lemma:mainproofequivalencemap}
An element $\bar{z} \in O/pO$ lies in $N/pO$ if and only if for every $k'$-linear map $\rho:O'/pO' \to k'$ with $\rho(N'/pO') = 0$ we have $(\rho \otimes 1_{k})(\bar{z}) = 0$.
\end{lemma}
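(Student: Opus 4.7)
The strategy is to recognize both sides of the asserted equivalence as manifestations of a single linear algebra fact about extension of scalars from $k'$ to $k$. Concretely, I will show that inside $O/pO = (O'/pO') \otimes_{k'} k$, the subspace $N/pO$ coincides with $(N'/pO') \otimes_{k'} k$; the lemma is then a standard duality/scalar-extension statement.

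First I would verify the key identification $N = N' \otimes_{W(k')} W(k)$ inside $H$. Since $W(k)$ is torsion-free as a $W(k')$-module (it is a domain containing $W(k')$), it is flat over the discrete valuation ring $W(k')$. Tensoring the short exact sequence
\[0 \to N' \to O' \to O'/N' \to 0\]
by $W(k)$ over $W(k')$ therefore yields an exact sequence
\[0 \to N' \otimes_{W(k')} W(k) \to O \to (O'/N') \otimes_{W(k')} W(k) \to 0.\]
Because $O'/N' = O'/(O' \cap pH')$ injects into the $k'$-vector space $H'/pH'$, its base change $(O'/N') \otimes_{W(k')} W(k) = (O'/N') \otimes_{k'} k$ injects into $H/pH = (H'/pH') \otimes_{k'} k$. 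Chasing the diagram, the kernel of $O \to H/pH$ equals the image of $N' \otimes_{W(k')} W(k)$, but this kernel is exactly $O \cap pH = N$. Hence $N = N' \otimes_{W(k')} W(k)$.

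Next I would reduce modulo $p$. From $pO = pO' \otimes_{W(k')} W(k)$ and the fact that $N'/pO'$ is annihilated by $p$, I obtain
\[N/pO \;=\; (N'/pO') \otimes_{W(k')} W(k) \;=\; (N'/pO') \otimes_{k'} k \;\subseteq\; (O'/pO') \otimes_{k'} k \;=\; O/pO.\]
Thus $N/pO$ is precisely the $k$-span of $N'/pO'$ inside the scalar extension.

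Finally I would invoke a standard linear algebra fact: if $V$ is a subspace of a $k'$-vector space $W$, then an element $\bar z \in W \otimes_{k'} k$ lies in $V \otimes_{k'} k$ if and only if $(\rho \otimes 1_k)(\bar z) = 0$ for every $k'$-linear form $\rho : W \to k'$ vanishing on $V$; this is immediate from the identification $(W/V) \otimes_{k'} k = (W \otimes_{k'} k)/(V \otimes_{k'} k)$ together with the fact that the intersection of the kernels of all linear forms on $W/V$ is zero. Applying this with $W = O'/pO'$ and $V = N'/pO'$ gives the lemma. The only nontrivial step is the identification $N = N' \otimes_{W(k')} W(k)$; once this is in hand, the rest is formal.
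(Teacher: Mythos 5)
Your proof is correct and takes essentially the same route as the paper's: identify $N/pO$ with $(N'/pO')\otimes_{k'}k$ inside $O/pO=(O'/pO')\otimes_{k'}k$, then invoke the standard fact that a subspace of a scalar extension is the common kernel of the extended linear forms vanishing on it. The one place you go beyond the paper is in carefully establishing $N = N'\otimes_{W(k')}W(k)$ via flatness and the snake-type diagram chase; the paper simply asserts this identity in the sentence preceding the lemma, so your verification fills a small gap rather than departing from the argument.
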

\begin{proof}
For every $k'$-linear map $\rho: O'/pO' \to k'$ with $\rho(N'/pO')=0$, 
\[\mathrm{Ker}(\rho \otimes 1_k) = \mathrm{Ker}(\rho) \otimes_{k'} k \supset N'/pO' \otimes_{k'}k = N/pO.\]
Set $S = \{\rho:O'/pO' \to k \; | \; \rho(N'/pO') = 0\}$. We have $\bigcap_{\rho \in S} \mathrm{Ker}(\rho \otimes 1_k) = N/pO$. This concludes the proof.
\end{proof}

By Lemma \ref{lemma:mainproofequivalencemap}, for every $k'$-linear map $\rho : O'/pO' \to k'$ such that $\rho(N'/pO') = 0$, we have $(\rho \otimes 1_k)(\bar{x}_{\eta}) = 0$. Therefore the following equality holds in $R$
\begin{equation} \label{equation:mainproofeq1}
\sum_{i=0}^{\infty} \rho(\varphi_{12}^i(\bar{x}^+)) \eta_0^{p^i} + \sum_{i=0}^{\infty}\rho(\varphi_{12}^i(\bar{x}^0)) \eta_0^{p^i} - \sum_{i=1}^{\infty}\rho(\varphi_{12}^{-i}(\bar{x}^-)) \eta_0^{p^{-i}}=(\rho \otimes 1_k)(\bar{x}_{\eta}) = 0.
\end{equation}
Because the Newton slopes of $(O^+,\varphi_{12})$ and $(O^-,\varphi_{12}^{-1})$ are positive, there exists a big enough $n$ such that 
$\varphi_{12}^i(x_+) \in pO'^+$ and $\varphi_{12}^{-i}(x_-) \in pO'^-$ for $i> n.$ As $\rho(N'/pO') = 0$,
\begin{equation} \label{equation:mainprooffact1}
\rho(\varphi_{12}^i(\bar{x}^+)) = 0, \quad \rho(\varphi_{12}^{-i}(\bar{x}^-)) = 0,\quad \forall \; i > n.
\end{equation}
Thus \eqref{equation:mainproofeq1} is reduced to
\begin{equation} \label{equation:mainproofeq2}
-\sum_{i=-n}^{-1} \rho(\varphi_{12}^{i}(\bar{x}^-))\eta_0^{p^i} + \sum_{i=0}^n (\rho(\varphi_{12}^i(\bar{x}^+)) + \rho(\varphi_{12}^i(\bar{x}^0)))\eta_0^{p^i}+\sum_{i=n+1}^{\infty}\rho(\varphi_{12}^i(\bar{x}^0))\eta_0^{p^i}=0.
\end{equation}
Write 
\begin{gather*}
\Phi(\beta) = -\sum_{i=0}^{n-1} \rho(\varphi_{12}^{i-n}(\bar{x}^-)) \beta^{p^i} + \sum_{i=n}^{2n} (\rho(\varphi_{12}^{i-n}(\bar{x}^+)) + \rho(\varphi_{12}^{i-n}(\bar{x}^0)))\beta^{p^i} \\+\sum_{i=2n+1}^{\infty}\rho(\varphi_{12}^{i-n}(\bar{x}^0))\beta^{p^i} \in k'[[\beta]].
\end{gather*}
Then \eqref{equation:mainproofeq2} is equivalent to  $\Phi(\eta_0^{p^{-n}}) = 0$ where $\eta_0^{p^{-n}} \in \alpha^{p^{-n}}k'[[\alpha^{p^{-n}}]]$. As $\eta_0^{p^{-n}} \neq 0$, we deduce that $\Phi(\beta) = 0$ by \cite[Lemma 8.9]{Vasiu:traversosolved}. Combining \eqref{equation:mainprooffact1}, we get
\begin{equation} \label{equation:mainprooffact2}
\begin{split}
\rho(\varphi_{12}^{-i}(\bar{x}^-)) & = 0,  \; \forall \; i \geq 1 ,\qquad \rho(\varphi_{12}^i(\bar{x}^0)) = 0, \quad \rho(\varphi_{12}^i(\bar{x}^+)) = 0, \; \forall \; i>n
 \\
& \rho(\varphi_{12}^i(\bar{x}^+)) + \rho(\varphi_{12}^i(\bar{x}^0)) = 0, \; \forall \; i = 0, \dots, n. \\
\end{split}
\end{equation}
As $\varphi_{12}$ is bijective on $O'^0$ and thus on $O'^0/pO'^0$, the subspace $V \subset O'^0/pO'^0$ generated by $\{\varphi_{12}^i(\bar{x}^0) \; | \; i \geq 0\}$ satisfies $\varphi_{12}(V) = V$ and thus $\varphi_{12}^{j}(V) = V$ for every $j \geq 0$. This implies that $V$ is generated by $\{\varphi_{12}^i(\bar{x}_0) \; | \; i > n\}$ and hence for $0 \leq i \leq n$, $\varphi_{12}^i(\bar{x}_0)$ is a linear combination of elements in $\{\varphi_{12}^i(\bar{x}_0) \; | \; i > n\}$ whence $\rho(\varphi_{12}^i(\bar{x}_0)) = 0$ for all $i = 0, \dots, n$. This allows us to extend \eqref{equation:mainprooffact2} to get
\begin{equation} \label{equation:mainprooffact3}
\rho(\varphi_{12}^{-i}(\bar{x}^-))  = 0,  \; \forall \; i \geq 1 ,\qquad \rho(\varphi_{12}^i(\bar{x}^0)) = 0, \quad \rho(\varphi_{12}^i(\bar{x}^+)) = 0, \; \forall \; i \geq 0.
\end{equation}
Finally, since $x \in p^{\ell_{\mathcal{M}_{1},\mathcal{M}_{2}}}H'$ and $\ell_{\mathcal{M}_{1},\mathcal{M}_{2}} > f_{\mathcal{M}_{1},\mathcal{M}_{2}} \geq 0$, we have $x \in pH'$ and thus $x \in pH' \cap O' =: N'$. As $\rho(N'/pO') = 0$, we have $0 = \rho(\bar{x}) = \rho(\bar{x}^++\bar{x}^0+\bar{x}^-) = \rho(\bar{x}^-)$. Thus \eqref{equation:mainprooffact3} can be further extended to
\begin{equation} \label{equation:mainprooffact4}
\rho(\varphi_{12}^{i}(\bar{x}^+))  = 0, \qquad \rho(\varphi_{12}^i(\bar{x}^0)) = 0, \qquad \rho(\varphi_{12}^{-i}(\bar{x}^-)) = 0, \qquad \forall \; i \geq 0.
\end{equation}
By Lemma \ref{lemma:mainproofequivalencemap} and \eqref{equation:mainprooffact4}, we have $\varphi_{12}^{i}(x^+), \varphi_{12}^i(x^0), \varphi_{12}^{-i}(x^-) \in pH$ and thus in $pH'$ for all $i \geq 0$. By the definition of $O'$, we have $x = x^++x^0+x^- \in pO'$. This reaches the desired contradiction.

\subsection{The equality $f_{\mathcal{M}} = n_{\mathcal{M}}$} \label{subsection:f=n} In this subsection, we show that $f_{\mathcal{M}} = n_{\mathcal{M}}$ when $\mathcal{M}$ is not an ordinary $F$-crystal. Thus in this case, $n_{\mathcal{M}}>0$. Recall $E_s$ is the set of all endomorphisms of $F_s(\mathcal{M})$ and $\mathbf{E}_s(k) = E_s$. The restriction homomorphism $\pi_{s,1}: E_s \to E_1$ has finite image if and only if the image of $\pi_{s,1}:\mathbf{E}_s \to \mathbf{E}_1$ has zero dimension, if and only if $s \geq 1 + f_{\mathcal{M}}$ by definition. The dimension of $\pi_{s,1}$ is $\gamma_{\mathcal{M}}(s) - \gamma_{\mathcal{M}}(s-1)$. It is zero if and only if $s > n_{\mathcal{M}}$ by Theorem \ref{theorem:monotonicitygamma}. As $s \geq 1 + f_{\mathcal{M}}$ if and only if $s > n_{\mathcal{M}}$, we conclude that $f_{\mathcal{M}} = n_{\mathcal{M}}$.

\subsection{Conclusion} By Subsections \ref{subsection:e<f}, \ref{subsection:l<f}, \ref{subsection:f=n} and Proposition \ref{proposition:f<e}, we have the following two theorems:
\begin{theorem} \label{theorem:maintheorem2}
We have equalities $f_{\mathcal{M}_1,\mathcal{M}_2} = e_{\mathcal{M}_1,\mathcal{M}_2} = \ell_{\mathcal{M}_1,\mathcal{M}_2}.$
\end{theorem}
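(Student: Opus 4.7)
The statement is the consolidation of three inequalities that have been separately established in the subsections immediately preceding the theorem, so the proof proposal is essentially a bookkeeping step: chain the three inequalities into a cyclic chain that forces equality throughout. My plan is to simply cite the three results and observe that they close up.

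First, I would invoke Proposition \ref{proposition:f<e}, which gives $f_{\mathcal{M}_1,\mathcal{M}_2} \leq e_{\mathcal{M}_1,\mathcal{M}_2}$ directly from the fact that $\mathrm{Im}(\pi_{\infty,n})$ is always finite (so whenever $\mathrm{Im}(\pi_{n+e,n}) = \mathrm{Im}(\pi_{\infty,n})$, the image $\mathrm{Im}(\pi_{n+e,n})$ is finite, which is the defining property of $f_{\mathcal{M}_1,\mathcal{M}_2}$). Second, I would cite Subsection \ref{subsection:e<f}, where the inequality $e_{\mathcal{M}_1,\mathcal{M}_2} \leq \ell_{\mathcal{M}_1,\mathcal{M}_2}$ is obtained by using Lemma \ref{lemma:solveequation} to lift any class in $\mathrm{Im}(\pi_{\ell_{\mathcal{M}_1,\mathcal{M}_2}+1,1})$ to an honest element of $H_{\infty}$, thereby showing $\mathrm{Im}(\pi_{\infty,1}) = \mathrm{Im}(\pi_{\ell_{\mathcal{M}_1,\mathcal{M}_2}+1,1})$. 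Third, I would cite Subsection \ref{subsection:l<f}, where the inequality $\ell_{\mathcal{M}_1,\mathcal{M}_2} \leq f_{\mathcal{M}_1,\mathcal{M}_2}$ is shown by the contradiction argument using the homomorphism $\Omega_x$ from $W(\mathfrak{m})$ into $H_{\ell_{\mathcal{M}_1,\mathcal{M}_2}}/A^0$ combined with the Artin--Schreier style vanishing result in \cite[Lemma 8.9]{Vasiu:traversosolved}; this is the only step that involved serious input (the infiniteness of $W(\mathfrak{m})$ after extending scalars via Lemmas \ref{lemma:fextension} and \ref{lemma:ellextension}).

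Chaining these three gives
\[
f_{\mathcal{M}_1,\mathcal{M}_2} \leq e_{\mathcal{M}_1,\mathcal{M}_2} \leq \ell_{\mathcal{M}_1,\mathcal{M}_2} \leq f_{\mathcal{M}_1,\mathcal{M}_2},
\]
so all three inequalities are equalities, which is what the theorem asserts. There is no genuine obstacle here since all three substantive inequalities have already been established; the only thing to check is that the three subsections do treat general pairs $(\mathcal{M}_1,\mathcal{M}_2)$ (not only the diagonal case $\mathcal{M}_1 = \mathcal{M}_2$), and indeed each of Proposition \ref{proposition:f<e}, Subsection \ref{subsection:e<f}, and Subsection \ref{subsection:l<f} is written at the level of two $F$-crystals. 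Thus the proof is a one-line citation chain.
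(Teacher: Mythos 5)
Your proof is correct and is precisely the paper's argument: the paper's own proof of this theorem is a one-line citation of Proposition~\ref{proposition:f<e} together with Subsections~\ref{subsection:e<f} and \ref{subsection:l<f}, which give the cyclic chain $f_{\mathcal{M}_1,\mathcal{M}_2} \leq e_{\mathcal{M}_1,\mathcal{M}_2} \leq \ell_{\mathcal{M}_1,\mathcal{M}_2} \leq f_{\mathcal{M}_1,\mathcal{M}_2}$. Your observation that all three ingredients are stated at the level of a general pair $(\mathcal{M}_1,\mathcal{M}_2)$ rather than only the diagonal case is the right thing to verify, and it holds.
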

\begin{theorem} \label{theorem:maintheorem3}
If $\mathcal{M}$ is not ordinary, then $n_{\mathcal{M}} = f_{\mathcal{M}} = e_{\mathcal{M}} = \ell_{\mathcal{M}}$.
\end{theorem}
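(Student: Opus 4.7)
The plan is to synthesize the four pieces already established in the previous subsections of this section. The cycle of inequalities
\[
f_{\mathcal{M}_1,\mathcal{M}_2} \leq e_{\mathcal{M}_1,\mathcal{M}_2} \leq \ell_{\mathcal{M}_1,\mathcal{M}_2} \leq f_{\mathcal{M}_1,\mathcal{M}_2}
\]
is an immediate consequence of Proposition \ref{proposition:f<e}, Subsection \ref{subsection:e<f}, and Subsection \ref{subsection:l<f}. It collapses at once to the chain of equalities $f_{\mathcal{M}_1,\mathcal{M}_2} = e_{\mathcal{M}_1,\mathcal{M}_2} = \ell_{\mathcal{M}_1,\mathcal{M}_2}$ asserted in Theorem \ref{theorem:maintheorem2}, and specializing $\mathcal{M}_1 = \mathcal{M}_2 = \mathcal{M}$ yields $f_{\mathcal{M}} = e_{\mathcal{M}} = \ell_{\mathcal{M}}$ with no hypothesis on $\mathcal{M}$.

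To upgrade this to the full chain $n_{\mathcal{M}} = f_{\mathcal{M}} = e_{\mathcal{M}} = \ell_{\mathcal{M}}$ one invokes the identification $f_{\mathcal{M}} = n_{\mathcal{M}}$ from Subsection \ref{subsection:f=n}. The key idea there is that both invariants are characterized by the same threshold condition on the reduction morphism $\pi_{s,1} : \mathbf{E}_s \to \mathbf{E}_1$: by definition of $f_{\mathcal{M}}$, the image of $\pi_{s,1}$ is finite (equivalently, zero-dimensional) iff $s \geq 1 + f_{\mathcal{M}}$, while Theorem \ref{theorem:monotonicitygamma} shows that the dimension of this image, which equals $\gamma_{\mathcal{M}}(s) - \gamma_{\mathcal{M}}(s-1)$, vanishes iff $s > n_{\mathcal{M}}$. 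The non-ordinariness of $\mathcal{M}$ enters precisely to ensure $n_{\mathcal{M}} \geq 1$, so that both thresholds are described by genuinely nontrivial stabilization; by Propositions \ref{proposition:ordinaryimpliesdimensionzero} and \ref{proposition:dimensionzeroimpliesordinary}, the ordinary case is exactly the degenerate one in which $\gamma_{\mathcal{M}} \equiv 0$ and the two defining conditions could not be identified cleanly.

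Since every step of the argument has been worked out already, there is no real obstacle remaining at this stage: the proof of Theorem \ref{theorem:maintheorem3} is simply the concatenation of Theorem \ref{theorem:maintheorem2} (for the homomorphism-level equalities) with the identification $f_{\mathcal{M}} = n_{\mathcal{M}}$ of Subsection \ref{subsection:f=n}. The only point requiring a moment's care is bookkeeping which results need the non-ordinary hypothesis (only the equality $n_{\mathcal{M}} = f_{\mathcal{M}}$) and which do not (the equalities $f_{\mathcal{M}} = e_{\mathcal{M}} = \ell_{\mathcal{M}}$, valid for all $\mathcal{M}$).
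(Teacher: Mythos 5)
Your proposal is correct and follows the paper's own proof exactly: the paper deduces Theorem \ref{theorem:maintheorem3} (and Theorem \ref{theorem:maintheorem2}) precisely by assembling Proposition \ref{proposition:f<e}, the inequality of Subsection \ref{subsection:e<f}, the inequality of Subsection \ref{subsection:l<f}, and the equality $f_{\mathcal{M}} = n_{\mathcal{M}}$ of Subsection \ref{subsection:f=n}. Your bookkeeping of where the non-ordinary hypothesis enters (only to guarantee $n_{\mathcal{M}} \geq 1$ so that the two threshold characterizations via $\pi_{s,1}$ agree at $s=1$) is also accurate.
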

\begin{corollary}
We have equalities $f_{\mathcal{M}_1,\mathcal{M}_2} = f_{\mathcal{M}_2,\mathcal{M}_1} = f_{\mathcal{M}^*_1,\mathcal{M}^*_2}$ and $e_{\mathcal{M}_1,\mathcal{M}_2} = e_{\mathcal{M}_2,\mathcal{M}_1} = e_{\mathcal{M}^*_1,\mathcal{M}^*_2}$.
\end{corollary}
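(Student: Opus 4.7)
The plan is to reduce both chains of equalities to the corresponding chain for the level torsion, which has already been established in Lemma \ref{lemma:leveltorsionequal}. By Theorem \ref{theorem:maintheorem2}, applied to each of the three ordered pairs $(\mathcal{M}_1,\mathcal{M}_2)$, $(\mathcal{M}_2,\mathcal{M}_1)$, and $(\mathcal{M}_1^*,\mathcal{M}_2^*)$, we have
\[
f_{\mathcal{M}_i,\mathcal{M}_j} = e_{\mathcal{M}_i,\mathcal{M}_j} = \ell_{\mathcal{M}_i,\mathcal{M}_j}
\]
for each such pair. Lemma \ref{lemma:leveltorsionequal} then gives
\[
\ell_{\mathcal{M}_1,\mathcal{M}_2} = \ell_{\mathcal{M}_2,\mathcal{M}_1} = \ell_{\mathcal{M}_1^*,\mathcal{M}_2^*},
\]
and substituting back yields both equalities of the corollary. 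The only thing to check is that Theorem \ref{theorem:maintheorem2} genuinely applies in the dual setting; since $(M_i^*,\varphi)$ is a latticed $F$-isocrystal and the definitions of $H$, $O$, and the invariants $e$, $f$, $\ell$ extend verbatim to this setting (as set up in Subsection \ref{subsection:leveltorsion}), the arguments of Subsections \ref{subsection:e<f} and \ref{subsection:l<f} carry over without change, so there is no real obstacle here.
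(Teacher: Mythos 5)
Your proof is correct and matches the paper's own (one-line) argument exactly: apply Theorem~\ref{theorem:maintheorem2} to each ordered pair and then transport the equalities via Lemma~\ref{lemma:leveltorsionequal}. Your caveat about $(\mathcal{M}_1^*,\mathcal{M}_2^*)$ being only latticed $F$-isocrystals is well placed, but rather than re-running the entire $f\le e\le \ell\le f$ machinery in that generality, it is more economical to observe (as in the proof of Lemma~\ref{lemma:leveltorsionequal}) that $\mathrm{Hom}(M_1^*,M_2^*)\cong\mathrm{Hom}(M_2,M_1)$ compatibly with the $\varphi$-action, so $\mathrm{Hom}_s(\mathcal{M}_1^*,\mathcal{M}_2^*)\cong\mathrm{Hom}_s(\mathcal{M}_2,\mathcal{M}_1)$ for every $s$ and hence $e_{\mathcal{M}_1^*,\mathcal{M}_2^*}=e_{\mathcal{M}_2,\mathcal{M}_1}$ and $f_{\mathcal{M}_1^*,\mathcal{M}_2^*}=f_{\mathcal{M}_2,\mathcal{M}_1}$ hold tautologically, leaving Theorem~\ref{theorem:maintheorem2} to be invoked only for the two genuine $F$-crystal pairs where it is stated.
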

\begin{proof}
This is clear by Theorem \ref{theorem:maintheorem2} and Lemma \ref{lemma:leveltorsionequal}.
\end{proof}

\section{Application to $F$-crystal of rank $2$}
In \cite[Theorem 1.4]{Xiao:computing}, we proved that if $\mathcal{M}$ is a non-isoclinic $F$-crystal of rank $2$, and is not a direct sum of two $F$-crystals of rank $1$, then $n_{\mathcal{M}} \leq 2 \lambda_1$ where $\lambda_1$ is the smallest Newton slope of $\mathcal{M}$. Now we show that the inequality is in fact an equality. For the sake of completeness, we state the theorem of isomorphism number of rank $2$ in all cases.
\begin{theorem} \label{theorem:rank2}
Let $\mathcal{M}$ be an $F$-crystal of rank $2$ with Hodge slopes $0$ and $e>0$. Let $\lambda_1$ be the smallest Newton slope of $\mathcal{M}$. Then we have the following three disjoint cases:
\begin{enumerate}[(i)]
\item if $\mathcal{M}$ is a direct sum of two $F$-crystals of rank $1$, then $n_{\mathcal{M}} = 1$;
\item if $\mathcal{M}$ is not a direct sum of two $F$-crystals of rank $1$ and is isoclinic, then $n_{\mathcal{M}} = e$;
\item if $\mathcal{M}$ is not a direct sum of two $F$-crystals of rank $1$ and is non-isoclinic, then $n_{\mathcal{M}} = 2\lambda_1$.
\end{enumerate}
\end{theorem}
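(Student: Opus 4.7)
The plan is to invoke the Main Theorem (Theorem \ref{theorem:maintheorem3}) in the non-ordinary cases (ii) and (iii), replacing the isomorphism number with the level torsion, and then to compute $\ell_{\mathcal{M}}$ explicitly. Case (i) is the ordinary case, where $\mathcal{M}$ is a direct sum of two rank-$1$ isoclinic ordinary $F$-crystals of different Newton slopes $0$ and $e$, and $n_{\mathcal{M}} = 1$ by the classical fact recalled around Remark \ref{remark:leveltorsiondiff}. Case (ii) is either read off from \cite[Theorem 1.4]{Xiao:computing} or recovered by the method below, simplified by the observation that $L^+ = L^- = 0$ in the isoclinic setting, so $O$ reduces to $A^0 \otimes_{\mathbb{Z}_p} W(k)$. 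The novel content is case (iii), where $n_{\mathcal{M}} \leq 2\lambda_1$ already comes from \cite[Theorem 1.4]{Xiao:computing}, and it remains to supply the matching lower bound $n_{\mathcal{M}} \geq 2\lambda_1$.

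For case (iii), since $\lambda_1 > 0$ the crystal $\mathcal{M}$ is non-ordinary, so Theorem \ref{theorem:maintheorem3} gives $n_{\mathcal{M}} = \ell_{\mathcal{M}}$; we will in fact compute $\ell_{\mathcal{M}} = 2\lambda_1$ on the nose. Choose a $W(k)$-basis $\{v_1, v_2\}$ of $M$ with $\varphi(v_1) = p^{\lambda_1} v_1$ (possible because the slope-$\lambda_1$ subisocrystal cuts out a rank-$1$ $F$-subcrystal of $M$) and $\varphi(v_2) = p^{\lambda_2} v_2 + a v_1$ for some $a \in W(k)$; the Hodge-slope constraint (elementary divisors $1$ and $p^e$) then forces $a \in W(k)^{\times}$. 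To diagonalize the Newton decomposition over $B(k)$, introduce $w_2 = v_2 + c v_1$ with $c \in B(k)$ the unique solution of $\sigma(c) = p^{\lambda_2 - \lambda_1} c - a p^{-\lambda_1}$; iterating this relation shows $c \in p^{-\lambda_1} W(k)^{\times}$, so in particular $v_p(c) = -\lambda_1$.

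In the basis $\{v_1, w_2\}$ the Frobenius is diagonal, so the three summands of the level module are routine to write down:
\[
O^+ = p^{2\lambda_1} W(k) \cdot e_{21}, \qquad O^- = W(k) \cdot e_{12},
\]
\[
O^0 = \{ y_{11} e_{11} + y_{22} e_{22} : y_{ii} \in W(k),\ y_{22} - y_{11} \in p^{\lambda_1} W(k) \}.
\]
Translating $O = O^+ \oplus O^0 \oplus O^-$ back to $\{v_1, v_2\}$ via the change-of-basis matrix $C = \begin{pmatrix} 1 & c \\ 0 & 1 \end{pmatrix}$ yields
\[
O = \{ X \in M_2(W(k)) : x_{21} \in p^{2\lambda_1} W(k),\ x_{22} - x_{11} \in p^{\lambda_1} W(k) \}.
\]
The element $X = e_{21} \in H = M_2(W(k))$ satisfies $p^{2\lambda_1 - 1} X \notin O$, so $\ell_{\mathcal{M}} \geq 2\lambda_1$, and the explicit description of $O$ gives the reverse inequality immediately; hence $n_{\mathcal{M}} = \ell_{\mathcal{M}} = 2\lambda_1$.

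The principal obstacle is purely bookkeeping: $H = \mathrm{End}_{W(k)}(M)$ is naturally described in $\{v_1, v_2\}$, whereas the Newton decomposition is diagonal only in $\{v_1, w_2\}$, and the change-of-basis matrix $C$ carries the non-integral entry $c$ of valuation $-\lambda_1$. The squared valuation $-2\lambda_1$ appearing in the $(1,2)$-entry $-c^2 y_{21}$ of $C Y C^{-1}$ for $Y \in L^+$ is precisely what pins $\ell_{\mathcal{M}}$ at $2\lambda_1$.
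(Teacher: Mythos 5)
Your proof takes essentially the same route as the paper: invoke Theorem~\ref{theorem:maintheorem3} to reduce to computing $\ell_{\mathcal{M}}$, choose a $W(k)$-basis putting $\varphi$ in lower-triangular form with a unit in the corner, pass to the $B(k)$-diagonalizing basis, read off $O^+$, $O^0$, $O^-$, and extract the index from the non-integral change of basis. Your normalization $w_2 = v_2 + cv_1$ with $v_p(c) = -\lambda_1$ (non-integral) versus the paper's integral $y_2 = vx_1 + p^{\lambda_1}x_2$ is a cosmetic difference; your explicit description of $O$ as the congruence lattice $\{X : x_{21} \in p^{2\lambda_1}W(k),\ x_{22}-x_{11} \in p^{\lambda_1}W(k)\}$ is slightly cleaner than the paper's inversion of the change-of-basis matrix $A$, and your derivation of $O^0$ (diagonalize first, intersect with $H$) is more direct than the paper's equation-solving to show $N = O^0$. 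The one real omission is that you assert $\lambda_1 > 0$ without argument: in case (iii) one must first show that the hypothesis ``not a direct sum of rank-$1$ pieces'' forces $\lambda_1 > 0$, which the paper does via Katz's slope-filtration theorem (if $\lambda_1 = 0$ then the Newton and Hodge polygons coincide and the crystal splits, a contradiction). That gap is short to fill but should not be skipped, since the whole computation (e.g. $c$ having strictly negative valuation, the non-ordinarity needed to apply Theorem~\ref{theorem:maintheorem3}) rests on it.
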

\begin{proof}
Parts (i) and (ii) are proved in \cite[Theorem 1.4 (i) and (ii)]{Xiao:computing}. In the case of Part (iii), \cite[Theorem 1.4 (iii)]{Xiao:computing} proves only the inequality $n_{\mathcal{M}} \leq 2 \lambda_1$. The proof of \cite[Theorem 1.4 (iii)]{Xiao:computing} has a minor mistake that can be easily fixed. In this paper, we will only prove the equality $n_{\mathcal{M}} = 2\lambda_1$ in the case of Part (iii).

We show that $\lambda_1 >0$ by showing that the assumption that $\lambda_1= 0$ leads to a contradiction. If $\lambda_1 = 0$, then the Hodge polygon and the Newton polygon  of $\mathcal{M}$ coincide. By \cite[Theorem 1.6.1]{Katz:slopefiltration}, we can decompose $\mathcal{M}$ into a direct sum of two $F$-crystals of rank $1$. Hence $\lambda_1>0$. Let $\lambda_2$ be the other Newton slope of $\mathcal{M}$. As $\mathcal{M}$ is not isoclinic, $\lambda_1 < \lambda_2$. It is easy to see that $\lambda_1$ and $\lambda_2$ are two positive integers. Hence there is a $W(k)$-basis $\mathcal{B}_1 = \{x_1, x_2\}$ of $M$ such that $\varphi(x_1) = p^{\lambda_1}x_1$ and $\varphi(x_2) = ux_1+p^{\lambda_2}x_2$ where $u \in W(k)$. If $u$ is a non-unit and belongs to $pW(k)$, then $\varphi(M) \subset pM$ and thus the smallest Hodge slope of $\mathcal{M}$ must be positive. This contradicts the assumption of the proposition, hence $u$ is a unit. By solving equations of the form $\varphi(z) = p^{\lambda_1}z$ and $\varphi(z) = p^{\lambda_2}z$, we find a $B(k)$-basis $\mathcal{B}_2 = \{y_1=x_1, y_2 = vx_1+p^{\lambda_1}x_2\}$ of $M[1/p]$ with $v$ a unit in $W(k)$ such that $\sigma(v)+u=p^{\lambda_2-\lambda_1}v$. It is easy to see that there is a unique $v$ satisfying this equation. 

Let $\mathcal{B}_1 \otimes \mathcal{B}_1^*$ be the $W(k)$-basis of $\mathrm{End}(M)$ that contains $x_i \otimes x_j^*$ for all $1 \leq i, j \leq 2$, where $(x_i \otimes x_j^*)(x_j) = x_i$. It is a $B(k)$-basis of $\mathrm{End}(M[1/p])$. We compute the formula of $\varphi :\mathrm{End}(M[1/p]) \to \mathrm{End}(M[1/p])$ with respect to $\mathcal{B}_1$ as follows:
\begin{align*}
\varphi(x_1 \otimes x_1^*) &= x_1 \otimes x_1^* - p^{-\lambda_2}ux_1 \otimes x_2^*, \\
\varphi(x_2 \otimes x_1^*) &= p^{-\lambda_1}u x_1 \otimes x_1^* + p^{\lambda_2 - \lambda_1} x_2 \otimes x_1^* - p^{-\lambda_1-\lambda_2}u^2x_1 \otimes x_2^* - p^{-\lambda_1}u x_2 \otimes x_2^*, \\
\varphi(x_1 \otimes x_2^*) &= p^{\lambda_1-\lambda_2}x_1 \otimes x_2^*, \\
\varphi(x_2 \otimes x_2^*) &= p^{-\lambda_2}ux_1 \otimes x_2^* + x_2 \otimes x_2^*.
\end{align*}

Similarly the set $\mathcal{B}_2 \otimes \mathcal{B}_2^*$ is another $B(k)$-basis of $\mathrm{End}(M[1/p])$. As $\varphi(y_1)=p^{\lambda_1}y_1$ and $\varphi(y_2) = p^{\lambda_2}y_2$, we compute the formula of $\varphi :\mathrm{End}(M[1/p]) \to \mathrm{End}(M[1/p])$ with respect to $\mathcal{B}_2$ as follows: 
\begin{align*}
\varphi(y_2 \otimes y^*_1) &= p^{\lambda_2-\lambda_1} y_2 \otimes y^*_1, \quad & \varphi(y_1 \otimes y^*_1) &= y_1 \otimes y^*_1, \\
\varphi(y_2 \otimes y^*_2) &= y_2 \otimes y^*_2, \quad &\varphi(y_1 \otimes y^*_2) &= p^{\lambda_1-\lambda_2} y_1 \otimes y_2^*.
\end{align*}
Therefore, we have found $B(k)$-bases for
\[L^+ = \langle y_2 \otimes y^*_1 \rangle_{B(k)}, \quad L^0 = \langle y_1 \otimes y^*_1, y_2 \otimes y^*_2 \rangle_{B(k)}, \quad L^- = \langle y_1 \otimes y^*_2 \rangle_{B(k)}.\]
We compute the change of basis matrix from $\mathcal{B}_1 \otimes \mathcal{B}_1^*$ to $\mathcal{B}_2 \otimes \mathcal{B}_2^*$ as follows:
\begin{align*}
y_1 \otimes y_1^* &= x_1 \otimes x^*_1-\frac{v}{p^{\lambda_1}}x_1 \otimes x_2^*,\\
y_2 \otimes y_1^* &= vx_1 \otimes x_1^*+p^{\lambda_1}x_2 \otimes x_1^*-\frac{v^2}{p^{\lambda_1}}x_1 \otimes x_2^*-vx_2 \otimes x_2^*, \\
y_1 \otimes y^*_2 &= \frac{1}{p^{\lambda_1}}x_1 \otimes x^*_2,\\
y_2 \otimes y^*_2 &= \frac{v}{p^{\lambda_1}}x_1 \otimes x^*_2 + x_2 \otimes x^*_2.
\end{align*}
It is easy to see that $p^{\lambda_1} y_i \otimes y^*_j \in \mathrm{End}(M) \; \backslash \; p\mathrm{End}(M)$ for $i, j \in \{1,2\}$. We get that 
\begin{enumerate}[(a)]
\item $O^+ = \langle p^{\lambda_1}y_2 \otimes y_1 \rangle_{W(k)}$; 
\item $N := \langle y_1 \otimes y^*_1+y_2 \otimes y_2^*, p^{\lambda_1}y_2\otimes y^*_2\rangle_{W(k)} \subset O^0$ is a lattice; 
\item $O^- = \langle p^{\lambda_1}y_1 \otimes y^*_2 \rangle_{W(k)}$. 
\end{enumerate}
We now show that in fact $N=O^0$. As $O^0 = A^0 \otimes W(k)$, it is enough to show that $A^0 \subset N$. Suppose 
\[ax_1 \otimes x_1^* + bx_2 \otimes x_1^* + cx_1 \otimes x_2^* + dx_2 \otimes x_2^* \in A^0,\]
we have
\begin{align*}
& \varphi(ax_1 \otimes x_1^* + bx_2 \otimes x_1^* + cx_1 \otimes x_2^* + dx_2 \otimes x_2^*) \\
= & (\sigma(a)-\sigma(b)p^{-\lambda_1}u) x_1 \otimes x_1^* + \sigma(b)p^{\lambda_2-\lambda_1}x_2 \otimes x_1^* +(-\sigma(b)p^{-\lambda_1}u+\sigma(d))x_2 \otimes x_2^*\\
&+(-\sigma(a)p^{-\lambda_2}u-\sigma(b)p^{-\lambda_1-\lambda_2}u^2+\sigma(c)p^{\lambda_1-\lambda_2}+\sigma(d)p^{-\lambda_2}u)x_1\otimes x_2^*\\
=& ax_1 \otimes x_1^* + bx_2 \otimes x_1^* + cx_1 \otimes x_2^* + dx_2 \otimes x_2^* .
\end{align*}
Hence
\begin{align}
a &= \sigma(a)-\sigma(b)p^{-\lambda_1}u,  \label{equation:solvinga}\\
b &= \sigma(b)p^{\lambda_2-\lambda_1}, \label{equation:solvingb} \\
c &= -\sigma(a)p^{-\lambda_2}u-\sigma(b)p^{-\lambda_1-\lambda_2}u^2+\sigma(c)p^{\lambda_1-\lambda_2}+\sigma(d)p^{-\lambda_2}u, \label{equation:solvingc}\\
d &= -\sigma(b)p^{-\lambda_1}u + \sigma(d). \label{equation:solvingd}
\end{align}
By \eqref{equation:solvingb}, we know that $b=0$. Hence $a=\sigma(a)$, $d = \sigma(d)$ by \eqref{equation:solvinga} and \eqref{equation:solvingd}, and
\[c = -ap^{-\lambda_2}u+\sigma(c)p^{\lambda_1-\lambda_2}+dp^{-\lambda_2}u,\]
by \eqref{equation:solvingc}, namely
\[p^{\lambda_1}(p^{\lambda_2-\lambda_1}c-\sigma(c)) = (d-a)u.\]
In order to have a solution for $c$, we need $d-a \in p^{\lambda_1}W(k)$. Let $d-a=p^{\lambda_1}\alpha$ for some $\alpha \in \mathbb{Z}_p$ as $a, d \in \mathbb{Z}_p$. Then we have a unique solution $c$ such that  
\[p^{\lambda_2-\lambda_1}c-\sigma(c) = \alpha u.\]
As $u = p^{\lambda_2 - \lambda_1}v-\sigma(v)$, we get $c=\alpha v$. It is now easy to see that 
\begin{align*}
& ax_1 \otimes x_1^* + bx_2 \otimes x_1^* + c x_1 \otimes x_2^* + d x_2 \otimes x_2^* \\
= & a(x_1 \otimes x_1^* + x_2 \otimes x_2^*) + (\alpha v) x_1 \otimes x_2^*+ (d-a)x_2 \otimes x_2^* \\
= & a(y_1 \otimes y_1^* + y_2 \otimes y_2^*)  + \alpha p^{\lambda_1}y_2 \otimes y_2^* \in N.
\end{align*}
Hence $N = O^0$.

The change of basis matrix from $\{y_1 \otimes y_1^*+y_2 \otimes y_2^*, p^{\lambda_1}y_2 \otimes y_1^*, p^{\lambda_1}y_1 \otimes y_2^*, p^{\lambda_1}y_2 \otimes y_2^*\}$ to $\mathcal{B}_1 \otimes \mathcal{B}_1^*$ is
\[A = \begin{pmatrix} 1 & p^{\lambda_1}v & 0 & 0 \\ 0 & p^{2\lambda_1} & 0 & 0 \\ 0 & -v^2 & 1 & v \\ 1 & -p^{\lambda_1}v & 0 & p^{\lambda_1} \end{pmatrix} \]
To find an upper bound of $\ell_{\mathcal{M}}$, we compute the inverse of $A$:
\[A^{-1} = \frac{1}{p^{2\lambda_1}}\begin{pmatrix} p^{2\lambda_1} & -p^{\lambda_1}v & 0 & 0 \\ 0 & 1 & 0 & 0 \\ p^{\lambda_1}v & -v^2 & p^{2\lambda_1} & -p^{\lambda_1}v \\ -p^{\lambda_1} & 2v & 0 & p^{\lambda_1} \end{pmatrix} \]
Thus the smallest number $\ell$ such that all entries of $p^{\ell}A^{-1} \in W(k)$ is $2\lambda_1$. Hence $\ell_{\mathcal{M}} = 2\lambda_1$. By Theorem \ref{theorem:maintheorem}, we have $n_{\mathcal{M}} = 2\lambda_1$.
\end{proof}

\section*{Acknowledgement}
The author would like to thank Adrian Vasiu for several suggestions and conversations and the anonymous referee for numerous suggestions. 

\bibliographystyle{amsplain}
\bibliography{references}

\end{document}